\documentclass[12pt,reqno]{amsart}

\usepackage[margin=1.15in]{geometry}
\usepackage{amsmath,amssymb,amsthm,mathrsfs}
\usepackage{tikz}
\usepackage{tikz-cd}
\usetikzlibrary{decorations.markings,arrows.meta}
\usepackage{graphicx}
\usepackage{enumitem}
\usepackage[colorlinks=true,linkcolor=blue,citecolor=blue,urlcolor=blue]{hyperref}
\usepackage{cleveref}

\theoremstyle{plain}
\newtheorem{theorem}{Theorem}[section]
\newtheorem{proposition}[theorem]{Proposition}
\newtheorem{lemma}[theorem]{Lemma}
\newtheorem{corollary}[theorem]{Corollary}
\newtheorem{conjecture}[theorem]{Conjecture}

\newtheorem*{theoremA}{Theorem A}
\newtheorem*{theoremB}{Theorem B}
\newtheorem*{theoremC}{Theorem C}

\theoremstyle{definition}
\newtheorem{definition}[theorem]{Definition}
\newtheorem{example}[theorem]{Example}
\newtheorem{remark}[theorem]{Remark}
\newtheorem{question}{Question}
\newtheorem{convention}[theorem]{Convention}

\newcommand{\Z}{\mathbb{Z}}
\newcommand{\Q}{\mathbb{Q}}
\newcommand{\F}{\mathbb{F}}
\newcommand{\CP}{\mathbb{CP}}
\newcommand{\RP}{\mathbb{RP}}
\newcommand{\Sph}{\mathbb{S}}

\newcommand{\X}{\mathcal{X}}
\newcommand{\Xo}{\X_{o}}
\newcommand{\Xe}{\X_{e}}

\newcommand{\Kh}{\mathit{Kh}_{e}}
\newcommand{\Khr}{\widetilde{\mathit{Kh}}_{e}}
\newcommand{\Kho}{\mathit{Kh}_{o}}
\newcommand{\Khro}{\widetilde{\mathit{Kh}}_{o}}

\newcommand{\Sq}{\mathrm{Sq}}
\newcommand{\Sm}{S}
\newcommand{\wed}{\vee}

\newcommand{\rk}{\operatorname{rk}}

\DeclareMathOperator{\Tor}{Tor}
\DeclareMathOperator{\Ext}{Ext}
\DeclareMathOperator{\Hom}{Hom}
\DeclareMathOperator{\Cone}{Cone}

\begin{document}

\title[Skein reduction for odd Khovanov spectra]{Skein reduction and stabilization\\
       for the odd Khovanov spectra of $T(2,m)\sqcup T(2,n)$}

\author{Krishnendu Kar}
\address{Department of Mathematics, Louisiana State University,
         Baton Rouge, LA 70803, USA}
\email{kkar2@lsu.edu}

\date{\today}

\subjclass[2020]{57K18, 55P42, 55S10}
\keywords{Khovanov homology, odd Khovanov homology, stable homotopy type,
          Steenrod square, skein cofiber sequence, torus links}

\begin{abstract}
For split links the even Khovanov spectrum is a smash product of the spectra of the
pieces; the odd one is not. For $T(2,m)\sqcup T(2,n)$ we show that the odd spectrum is, in
each quantum grading, a wedge of spheres and copies of a suspension of $\CP^{2}$. Using the
skein cofiber sequence at the level of spectra, we prove that for fixed $q-m-n$ the number of
these copies is determined by finitely many small links and stabilizes for large $m$ and $n$. The $\CP^{2}$
summand of $T(2,3)\sqcup T(2,3)$, due to Sch\"utz and Rajapakse, persists in quantum grading
$m+n+8$ for all $m,n\ge3$. We conjecture that the number
of copies of $\CP^{2}$ equals the number of copies of $\RP^{2}\wedge\RP^{2}$ in the even
spectrum.
\end{abstract}

\maketitle

\setcounter{tocdepth}{1}
\tableofcontents

\section{Introduction}

\subsection{Motivation}
Lipshitz and Sarkar refined Khovanov homology to a stable homotopy type: for each quantum
grading $q$ a finite CW spectrum $\Xe^{q}(L)$ whose reduced cohomology is
$\Kh^{*,q}(L;\Z)$ \cite{LS14a}. The spectrum carries more information than its cohomology:
it distinguishes some links with isomorphic Khovanov homology \cite{Seed}, and Lipshitz and
Sarkar used its second Steenrod square to construct refinements of Rasmussen's slice genus
bound \cite{Rasmussen10,LS14b}. Using signed Burnside functors, Sarkar, Scaduto and
Stoffregen constructed the analogous spectrum $\Xo^{q}(L)$ \cite{SSS} for the odd Khovanov
homology of Ozsv\'ath, Rasmussen and Szab\'o \cite{ORS}, which agrees with Khovanov
homology modulo~$2$ but not over~$\Q$.

For a split link the even spectrum is determined by the spectra of the components:
Lawson, Lipshitz and Sarkar proved
\begin{equation}\label{eq:even-smash}
\Xe^{q}(L_1\sqcup L_2)\simeq
\bigvee_{q_1+q_2=q}\Xe^{q_1}(L_1)\wedge\Xe^{q_2}(L_2)
\end{equation}
\cite[\S9]{LLS}. Sarkar, Scaduto and Stoffregen asked whether the odd spectrum of a split
link can be expressed in terms of the spectra of its components \cite[\S5.7, (q-6)]{SSS}.
It is not the smash product: Sch\"utz's calculation for the split union of two positive
trefoils \cite[\S5.2]{Schutz2022}, together with Rajapakse's identification of the odd
Steenrod square \cite[Thm.~1.6]{Rajapakse}, gives
\begin{equation}\label{eq:schutz-example}
\Xo^{14}\bigl(T(2,3)\sqcup T(2,3)\bigr)
\simeq\Sigma^2\CP^2\wed(S^5)^{\wed4},
\end{equation}
whereas each odd spectrum of a positive trefoil is a wedge of spheres. So this example
already excludes an odd analogue of \eqref{eq:even-smash}.

Integral cohomology does not detect a summand $\Sigma^{k}\CP^{2}$: the spectra
$\Sigma^{k}\CP^{2}$ and $S^{k+2}\wed S^{k+4}$ have isomorphic cohomology, and they are
distinguished by $\Sq^{2}$, which detects the Hopf map $\eta$ attaching the top cell. In the
even theory, Lipshitz and Sarkar found no such summand among prime links with at most $11$
crossings and asked whether one ever occurs \cite[Question~5.2]{LS14c}; Lobb, Orson and
Sch\"utz conjectured that none does \cite[Conj.~2.1]{LOS}. The example
\eqref{eq:schutz-example} shows that the odd theory behaves differently, and raises the
question of how many such summands the odd spectra of split links contain.

We study the family $L_{m,n}=T(2,m)\sqcup T(2,n)$, with both torus links positively
oriented and $m,n\ge2$. Its odd homology is free and, in each quantum grading,
supported in three consecutive homological degrees. The standard classification
in this range expresses the spectrum as a wedge of spheres and suspensions of
$\CP^2$. The remaining invariant is the number of $\CP^2$ summands; since the odd spectra
of the factors are wedges of spheres (\Cref{lem:factorspectra}), these summands occur in the split union although they are absent from the factors. Our main result reduces this number, for fixed $q-m-n$, to a
finite set of links. It also gives stabilization as either index grows and shows that the
summand in \eqref{eq:schutz-example} persists for all $m,n\ge3$. Thus the odd analogue of
\cite[Conj.~2.1]{LOS} fails for every $L_{m,n}$ with $m,n\ge3$; \Cref{ss:earlier}
discusses the consequences for the questions of \cite[\S5.7]{SSS}.

The reduction comes from the skein cofiber sequence. Its connecting map is zero
on integral cohomology, but need not be null-homotopic. We determine the part of
this map that can survive: a Hopf-map component from a single sphere in the
source. When that sphere is absent, the skein step splits off only spheres;
otherwise it can introduce at most one additional $\CP^2$ summand.
This gives both the reduction and a monotonicity statement for the number of
summands (\Cref{prop:dichotomy}).

\subsection{Statement of results}
Put
\[
\begin{gathered}
 d=\frac{q-m-n}{2},\qquad S_k=\{0\}\cup\{2,\ldots,k\},\\
 P(t)=P_{m,n}(t)=\#\{(i_1,i_2)\in S_m\times S_n:i_1+i_2=t\}.
\end{gathered}
\]
\begin{remark}\label{rem:parity}
By \Cref{lem:torusknot} and \Cref{cor:kunneth-torus}, $\Kho^{*,q}(L_{m,n};\Z)$ vanishes
unless $q\equiv m+n\pmod2$, and the spectrum is contractible in the remaining
gradings. We restrict to this parity, so $d$ is an integer.
\end{remark}

The homology ranks in degrees $d,d+1,d+2$ are $P(d),2P(d+1),P(d+2)$; we call $d$ the
\emph{window index} of $L_{m,n}$ at $q$.
Consequently there is a unique integer $\nu=\nu(m,n;q)\ge0$ such that
\begin{equation}\label{eq:intro-classification}
\begin{split}
\Xo^q(L_{m,n})\simeq{}& (\Sigma^{d-2}\CP^2)^{\wed\nu}
 \wed(S^d)^{\wed(P(d)-\nu)}\wed(S^{d+1})^{\wed 2P(d+1)}\wed(S^{d+2})^{\wed(P(d+2)-\nu)}.
\end{split}
\end{equation}
Here $\nu$ is the rank of $\Sq^2_1$ from homological degree $d$ to $d+2$.
The homology calculation and classification are given in
\Cref{sec:window,sec:classification}.

\begin{theoremA}
For every $m,n\ge2$, one has $\nu(m,n;q)=0$ when
\[
q\in\{m+n-4,\ m+n-2,\ m+n+2,\ 3(m+n)-2,\ 3(m+n)\}.
\]
Thus the spectrum is a wedge of spheres in these gradings.
\end{theoremA}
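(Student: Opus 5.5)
Since $\nu$ is the rank of $\Sq^2_1$ from degree $d$ to degree $d+2$, and by \eqref{eq:intro-classification} satisfies $\nu\le\min\{P(d),P(d+2)\}$, I would prove Theorem A by showing that for each listed grading either the source or the target of this Steenrod square has rank zero. The needed input is the support of $P(t)=\#\{(i_1,i_2)\in S_m\times S_n: i_1+i_2=t\}$. Because $S_m=\{0,2,3,\dots,m\}$, the only possible sums are $t=0$ and $2\le t\le m+n$. Also $P(1)=0$, $P(0)=P(2)=2$ (counting $(0,2),(2,0)$), and $P(m+n)=1$.

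Now I would convert each grading into its window index $d=(q-m-n)/2$ and treat the cases in turn.
\begin{itemize}
\item For $q=m+n-4$ we have $d=-2$. Then $P(d)=0$, so $\nu=0$.
\item For $q=m+n-2$ we have $d=-1$. Then $P(d)=0$, so $\nu=0$.
\item For $q=m+n+2$ we have $d=1$. Then $P(1)=0$ because $1\notin S_m+S_n$, so $\nu=0$. In this case $P(d+2)=P(3)$ may be nonzero, but the source vanishes.
\item For $q=3(m+n)-2$ we have $d=m+n-1$. Then $P(d+2)=P(m+n+1)=0$, so $\nu=0$.
\item For $q=3(m+n)$ we have $d=m+n$. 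Then $P(d+2)=0$, so $\nu=0$.
\end{itemize}
In all five cases one end of the three-term window vanishes. Since a $\Sigma^{d-2}\CP^2$ summand needs a cell in degree $d$ and a cell in degree $d+2$, none can occur, and the spectrum is a wedge of spheres.

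The only step that needs care is the bookkeeping. I would check the indexing convention in \eqref{eq:intro-classification}, namely that the bottom cell of $\Sigma^{d-2}\CP^2$ sits in degree $d$, so the ranks really are $P(d)$ and $P(d+2)$ at the two ends. I would also confirm that the window ranks are exactly $P(d),2P(d+1),P(d+2)$ and do not pick up extra terms at the edges, for example from the $S^{d+1}$ part or from the two-component structure of $T(2,m)$ when $m$ is even. Both facts are part of the homology computation in \Cref{sec:window}, which I am taking as given. Once they hold, the theorem reduces to the vanishing of $P$ outside $\{0\}\cup[2,m+n]$.
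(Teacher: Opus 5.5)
Your argument is correct and is essentially the paper's proof. You convert each listed grading to $d=(q-m-n)/2\in\{-2,-1,1,m+n-1,m+n\}$, observe from \Cref{lem:Pvanishing} that $P(d)=0$ or $P(d+2)=0$, and conclude from the bound $\nu\le\min\{P(d),P(d+2)\}$ of \Cref{cor:room}. One harmless slip: $P(0)=1$, not $2$, since the only pair summing to $0$ is $(0,0)$ (the pairs $(0,2),(2,0)$ give $P(2)=2$), but you never use this value.
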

These are the values $d\in\{-2,-1,1,m+n-1,m+n\}$.
The interior value $d=1$ comes from the missing homological degree $1$ in
$\Khro(T(2,k))$.

For the remaining gradings define
\[
M(q)=\begin{cases}m,&q\ge3m+n,\\d,&q=3m+n-2,\\d+2,&q\le3m+n-4,\end{cases}
\qquad
N(q)=\begin{cases}n,&q\ge m+3n,\\d,&q=m+3n-2,\\d+2,&q\le m+3n-4,\end{cases}
\]
and $q^*=M(q)+N(q)+2d$.
\begin{theoremB}
If $0\le d\le m+n-2$ and $d\ne1$, then
\[
\Xo^q(L_{m,n})\simeq\Xo^{q^*}(L_{M(q),N(q)})\wed W,
\]
where $W$ is a wedge of spheres. In particular,
$\nu(m,n;q)=\nu(M(q),N(q);q^*)$.
\end{theoremB}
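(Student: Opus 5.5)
We prove Theorem B by induction, lowering the first index from $m$ to $M(q)$ and then the second from $n$ to $N(q)$. Each reduction is a single skein step at a crossing of one torus factor. Resolving a crossing of $T(2,m)$ gives a skein cofiber sequence of spectra
\[
\Sigma^{a}\Xo^{q'}(L_{m-1,n})\longrightarrow \Xo^{q}(L_{m,n})\longrightarrow \Sigma^{b}\Xo^{q''}(L_{m-2,n})\xrightarrow{\ \delta\ }\cdots
\]
The oriented resolution gives $T(2,m-1)\sqcup T(2,n)$. The unoriented resolution gives an unknot-type diagram, which we identify with a shift of $L_{m-2,n}$ after Reidemeister moves. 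The suspensions and quantum shifts $a,b,q',q''$ are read off from the standard orientation bookkeeping for positive $T(2,k)$.

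We first track the window indices. By the homology computation (\Cref{sec:window}) all three terms have free cohomology supported in windows of length three. The step is useful precisely when one side of the sequence is concentrated in degrees that cannot interact with the other. This is where the trichotomy in the definition of $M(q)$ comes from.

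\begin{itemize}
\item When $q\ge 3m+n$, we have $d$ large relative to $m$. Here the homology of $\Kho(L_{m,n})$ in this grading comes entirely from the top generators of $T(2,m)$ tensored with $T(2,n)$, so no reduction in $m$ is needed.
\item When $q\le 3m+n-4$, the polynomial count $P_{m,n}(t)$ for $t\le d+2$ only involves $S_m\cap[0,d+2]=S_{d+2}$. So the groups of $L_{m,n}$ and $L_{d+2,n}$ agree in the window, and the discarded factor contributes homology in degrees disjoint from it.
\item The boundary case $q=3m+n-2$ is handled by an extra step down to $d$.
\end{itemize}

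The core step is analyzing the connecting map $\delta$. Each term is classified by \eqref{eq:intro-classification} as a wedge of spheres and copies of $\Sigma^{k}\CP^2$, and $\delta$ is zero on integral cohomology (it must be, since the long exact sequence splits by rank count). A map between such wedges that vanishes on cohomology has components only in the stable stems $\pi_1^s=\Z/2\langle\eta\rangle$ and $\pi_2^s=\Z/2\langle\eta^2\rangle$, and its possible components between cells are restricted by the degrees. Using the window supports, we check the degrees in the range where the dropped piece lives. In that range either the source has no sphere in the relevant degree (so $\delta\simeq0$), or the targets of possible $\eta$-components lie outside the window of the retained piece. Then $\delta\simeq 0$, the cofiber sequence splits, and $\Xo^q(L_{m,n})\simeq \Xo^{q^*}(L_{M,n})\wedge(\text{spheres})$ up to the identification of the retained term. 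This is the content of \Cref{prop:dichotomy} in the ``sphere absent'' case.

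The main obstacle is exactly this nullity of $\delta$. One must rule out $\eta$- and $\eta^2$-components, including components hitting the top cell of a $\CP^2$ summand, where the composite with the attaching map could be nonzero. To handle it, we would use the fact that $\Sq^2$ detects $\eta$, and compare $\Sq^2$ ranks via Rajapakse's odd Steenrod square on both sides. Equivalently, we would show that the map from the sphere summands of degree $d$ in the dropped piece lands in a range where the retained spectrum has no cells two degrees up. Applying this to both factors in turn, and recording that $\nu$ equals the $\Sq^2_1$ rank (which is preserved under wedging with spheres), gives $\nu(m,n;q)=\nu(M(q),N(q);q^*)$.
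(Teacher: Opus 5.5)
\textbf{The skein sequence is misidentified.} In the standard diagram, the $1$-resolution at a crossing of a $T(2,k)$ twist region joins the two strands into one unknotted component with $k-1$ kinks. So resolving a crossing of $T(2,m)$ gives a shift of $\Xo\bigl(T(2,n)\sqcup U\bigr)$ as the third term, not of $\Xo(L_{m-2,n})$. You have conflated resolving a crossing with changing it, as in the Jones skein triple $(L_+,L_-,L_0)=(T(2,m),T(2,m-2),T(2,m-1))$.

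Ranks rule your version out. $\Kho(T(2,m))$, $\Kho(T(2,m-1))$ and $\Kho(T(2,m-2))$ have total ranks $2m$, $2m-2$ and $2m-4$. So the splitting by rank count that you invoke to get $\delta^*=0$ fails, except for $m=3$, where $T(2,1)$ happens to be an unknot.

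With the correct third term, your rank count does work, and it is a legitimate shortcut for \Cref{lem:deltastar}. Resolving in $T(2,n)$ as the paper does (your order is symmetric), the identity $P_{m,n}(t)-P_{m,n-1}(t)=[\,t-n\in S_m\,]$ shows that the ranks of $\Xo^{q}(L_{m,n})$ are exactly those of $T=\Xo^{q-1}(L_{m,n-1})$ plus those of $\Sigma A$. Since all groups are free, every image of $\delta^*$ vanishes.

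\textbf{The nullity of $\delta$ is not actually argued.} You rightly call it the main obstacle, but you leave it as a vague degree check, and the criterion that decides it never appears. With the correct sequence, \Cref{lem:deloop} and \Cref{lem:factorspectra} make $A$ a wedge of spheres in degrees $d-1,d,d+1$ with multiplicities $\alpha,\beta,\gamma$, where $\gamma=[\,d-n+2\in S_m\,]$ (\Cref{lem:source}).
\begin{itemize}
\item $T$ is $(d-1)$-connected with free $H_d$, and $\delta^*=0$, so $\delta$ is null on all spheres of dimension at most $d$ (\Cref{lem:hurewicz}).
\item The top sphere $\Sm^{d+1}$ can only hit the $\Sm^{d}$ summands of $T$, and only by $\eta$. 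Indeed $\pi_{d+1}(\Sigma^{d-2}\CP^2)=0$ (\Cref{lem:cp2pi}), and the $\Z$-components are degrees detected by $\delta^*$.
\item $\eta^2$ cannot occur for degree reasons.
\end{itemize}
Hence $\delta\simeq0$ whenever $\gamma=0$ (\Cref{cor:deltanull}).

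Theorem B then reduces to an arithmetic fact. For $d-n'+2\le m'$, one has $\gamma=0$ if and only if $n'=d+1$ or $n'\ge d+3$, that is, if and only if $\rho_d(n')<n'$. This holds at every step from $n$ down to $N(q)$ and fails at $n'=N(q)$, which is why the reduction stops there. In particular, the step from $d+1$ to $d$ is free exactly because $1\notin S_m$.

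\textbf{Other points.}
\begin{itemize}
\item Your observation that the window cohomology of $L_{m,n}$ and $L_{d+2,n}$ agree says nothing about attaching maps.
\item Your fallback of comparing $\Sq^2$ ranks via Rajapakse is circular, since that rank on $L_{m,n}$ is the unknown $\nu$. Theorem B needs no input on $\Sq^2$ beyond \Cref{cor:classification}.
\item The conclusion should be a wedge $\wed W$ with spheres, not a smash.
\end{itemize}
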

The target is a fixed point of this reduction. The finite set of targets has
an explicit description.
\begin{corollary}\label{cor:intro-irreducible}
For $d\ge0$, $d\ne1$, set
\[
I_d=\{\{a,b\}:a,b\in\{2,\ldots,d\}\cup\{d+2\},\ a+b\ge d+2\},
\]
where unordered pairs allow equal entries. Every value
$\nu(m,n;m+n+2d)$ with $m+n\ge d+2$ equals $\nu(a,b;a+b+2d)$ for some $\{a,b\}\in I_d$.
Moreover, $|I_0|=1$ and $|I_d|=d+\lfloor d^2/4\rfloor$ for $d\ge2$.
For $d<0$, $d=1$, or $d>m+n-2$, the value of $\nu(m,n;m+n+2d)$ is zero.
\end{corollary}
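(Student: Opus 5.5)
The plan is to deduce the corollary from Theorem~B, with Theorem~A and the classification \eqref{eq:intro-classification} handling the degenerate gradings; after that only a count of $I_d$ remains. Throughout, $q=m+n+2d$, so $d$ is the window index.

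\emph{Vanishing cases.} By \eqref{eq:intro-classification}, $\nu\le\min(P(d),P(d+2))$. Since $S_m\times S_n\subset[0,m]\times[0,n]$, we have $P(t)=0$ for $t<0$ and for $t>m+n$. If $d<0$ then $P(d)=0$. If $d>m+n-2$ then $d+2>m+n$, so $P(d+2)=0$. In both cases $\nu=0$. The case $d=1$ is exactly the interior value in Theorem~A, so $\nu=0$ there too.

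\emph{Reduction lands in $I_d$.} Now take $0\le d\le m+n-2$ with $d\ne1$ (so $m+n\ge d+2$). Theorem~B gives $\nu(m,n;q)=\nu(M,N;M+N+2d)$, where $M=M(q)$ and $N=N(q)$; note $q^*=M+N+2d$, so the window index is preserved. I will translate the case conditions on $q$ into conditions on $d$:
\begin{itemize}
\item $q\ge 3m+n$ is equivalent to $d\ge m$, and then $M=m\in\{2,\dots,d\}$.
\item $q=3m+n-2$ is equivalent to $d=m-1$, and then $M=d$. Here $d\ge2$, because $m\ge2$ forces $d\ge1$ and $d\ne1$.
\item $q\le 3m+n-4$ is equivalent to $d\le m-2$, and then $M=d+2$.
\end{itemize}
So $M\in\{2,\dots,d\}\cup\{d+2\}$, and symmetrically for $N$. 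For $d=0$ only the last case can occur, so $M=N=2$.

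It remains to check that $M+N\ge d+2$.
\begin{itemize}
\item If $M$ or $N$ equals $d+2$, this is immediate since the other entry is at least $2>0$.
\item If $M=d$ (with $d\ge2$), then $N\ge2$ gives $M+N\ge d+2$; the case $N=d$ is symmetric.
\item Otherwise $M=m$ and $N=n$, and $m+n\ge d+2$ holds by hypothesis.
\end{itemize}
Hence $\{M,N\}\in I_d$. Since $\nu(a,b;\cdot)=\nu(b,a;\cdot)$ by symmetry of the split union, the unordered pair is well defined.

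\emph{Counting $I_d$.} For $d=0$ the set $\{2,\dots,0\}$ is empty, so $I_0=\{\{2,2\}\}$ and $|I_0|=1$. For $d\ge2$, split the pairs according to whether $d+2$ is an entry.
\begin{itemize}
\item Pairs containing $d+2$ automatically satisfy the sum condition. There are $(d-1)+1=d$ of them: $\{a,d+2\}$ for $a\in\{2,\dots,d\}$, plus $\{d+2,d+2\}$.
\item Pairs with $2\le a\le b\le d$ and $a+b\ge d+2$: set $a'=a-1$ and $b'=b-1$, so we count $1\le a'\le b'\le d-1$ with $a'+b'\ge d$. For each $b'$ with $\lceil d/2\rceil\le b'\le d-1$ there are $2b'-d+1$ choices of $a'$. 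Summing gives $\lfloor d^2/4\rfloor$; for instance $d=2,3$ give $1,2$, and the general case follows by separating $d$ even and odd.
\end{itemize}
This gives $|I_d|=d+\lfloor d^2/4\rfloor$.

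The only step needing care is the case translation in the second paragraph, in particular the boundary case $q=3m+n-2$ and the exclusion $d\ne1$ that keeps $M=d\ge2$. Everything else is bookkeeping.
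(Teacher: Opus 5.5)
Your proof is correct and follows essentially the paper's route. The paper uses Theorem~B and gets $M+N\ge d+2$ the same way you do; it derives membership in $I_d$ from idempotence of $\rho_d$ in \Cref{cor:trichotomy}, where you instead translate the three cases of $M(q)$ directly into $d\ge m$, $d=m-1$, $d\le m-2$. The remaining differences are cosmetic: you handle all of $d<0$ and $d>m+n-2$ through the bound $\nu\le\min\{P(d),P(d+2)\}$ rather than via Theorem~A plus contractibility, and you count pairs in $I_d$ by their larger entry, whereas \Cref{cor:finite} counts by the smaller entry $a$ with $\min(a-1,d+1-a)$ partners.
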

The full distinction between vanishing, reducible and terminal gradings is
recorded in \Cref{cor:trichotomy}. Here ``terminal'' means only that the
reduction makes no further step; it does not assert that the spectrum is
indecomposable.

For $m,n\ge d+2$ the target is $L_{d+2,d+2}$ and the homology ranks have stabilized.
\begin{theoremC}
Fix $d\ge3$ and put $\nu_d=\nu(d+2,d+2;4d+4)$. For all $m,n\ge d+2$,
\[
\begin{split}
\Xo^{m+n+2d}(L_{m,n})\simeq{}& (\Sigma^{d-2}\CP^2)^{\wed\nu_d}
 \wed(S^d)^{\wed(d-1-\nu_d)}\\
 &{}\wed(S^{d+1})^{\wed 2d}\wed(S^{d+2})^{\wed(d+1-\nu_d)}.
\end{split}
\]
\end{theoremC}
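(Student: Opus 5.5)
Theorem~C is a consequence of Theorem~B combined with the classification \eqref{eq:intro-classification}; the proof reduces to computing $P$ and checking which case of $M$ and $N$ applies. Fix $d\ge3$ and $m,n\ge d+2$, and set $q=m+n+2d$. The range hypothesis of Theorem~B holds, since $d\ne1$ and $0\le d\le 2d+2\le m+n-2$. Hence $\nu(m,n;q)=\nu(M(q),N(q);q^*)$.

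We first identify the reduction target. The condition $q\le 3m+n-4$ is equivalent to $2d\le 2m-4$, that is, $m\ge d+2$. This holds by assumption, so $M(q)=d+2$. By symmetry $N(q)=d+2$. Then
\[
q^*=(d+2)+(d+2)+2d=4d+4,
\]
so $\nu(m,n;q)=\nu(d+2,d+2;4d+4)=\nu_d$.

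Next we compute the ranks. From \eqref{eq:intro-classification}, the sphere counts are $P(d)-\nu$, $2P(d+1)$ and $P(d+2)-\nu$. Here $P(t)$ counts pairs $(i_1,i_2)\in S_m\times S_n$ with $i_1+i_2=t$. For $t\le d+2$ and $m,n\ge d+2$, the upper bounds $i_1\le m$ and $i_2\le n$ are automatic, because each entry is at most $t\le d+2$. So $P(t)$ equals the number of pairs in $(\{0\}\cup\{2,3,\dots\})^2$ summing to $t$. For $t\ge 4$ there are three types of pair:
\begin{itemize}
\item $(0,t)$ and $(t,0)$, giving 2 pairs;
\item $(i,t-i)$ with $2\le i\le t-2$, giving $t-3$ pairs.
\end{itemize}
Hence $P(t)=t-1$ for $t\ge4$. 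This formula applies to all three values $t=d,d+1,d+2$ because $d\ge3$ only guarantees $d+1,d+2\ge4$, so we check $t=3$ separately. The pairs summing to 3 are $(0,3)$ and $(3,0)$, since $(1,2)$ is excluded; the middle range $2\le i\le 1$ is empty. So $P(3)=2=3-1$, and the formula $P(t)=t-1$ holds for all $t\ge3$.

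Therefore $P(d)=d-1$, $2P(d+1)=2d$ and $P(d+2)=d+1$. Substituting these values and $\nu=\nu_d$ into \eqref{eq:intro-classification} gives exactly the stated wedge decomposition.

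The main obstacle is not in this argument, which is bookkeeping. It lies in Theorem~B itself and in the classification step, both of which we assume. The only care needed here is with the edge cases: the inequality $m\ge d+2$ is what places $q$ in the third case of $M$ (and likewise $N$), and $d\ge3$ is what makes the count $P(t)=t-1$ valid at $t=d$. For $d=2$ we would get $P(2)=2\ne1$, which is why that value is excluded from the theorem.
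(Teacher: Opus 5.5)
Your proof is correct and is essentially the paper's. Theorem~B with $M(q)=N(q)=d+2$ (the paper reads this off from $\rho_d(k)=d+2$ for $k\ge d+2$, \Cref{lem:rho-vs-MN}) gives $\nu(m,n;m+n+2d)=\nu_d$, and the ranks $P(d)=d-1$, $P(d+1)=d$, $P(d+2)=d+1$, which you recount by hand, are \Cref{lem:Pvanishing} applied with $m,n\ge d+2$; your explanation of why $d=2$ is excluded ($P(2)=2$) is also right, and the only blemish is cosmetic (you announce ``three types of pair'' but list two items).
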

The skein argument also proves
$\nu(m,n;m+n+8)\ge1$ for all $m,n\ge3$, using the published trefoil calculation
\eqref{eq:schutz-example}. Determining whether this number is exactly one
reduces to the single rank $\nu(6,6;20)$. The numerical evidence and the
consequences of that rank computation are separated from the structural
results in \Cref{sec:examples}. Theorems~\ref{thm:vanishing}, \ref{thm:reduction}
and~\ref{thm:stabilization} also hold for the mirrors $T(2,-m)\sqcup T(2,-n)$, with an
invariant $\overline{\nu}$ in place of $\nu$ (\Cref{thm:mirror}).

\subsection{Relation to earlier work}
Our use of $\Sq^2_1$ relies on Rajapakse's identification with the Steenrod square
of $\Xo$.
The trefoil example and the failure of the odd smash-product formula are
already known \cite[\S5.2]{Schutz2022}, \cite[Thm.~1.3]{Rajapakse}.
The contribution here is the reduction across the two-parameter family,
including steps with a noncontractible source, and the resulting propagation
of this example.

The even skein cofibration appears in \cite{LS14a}, and flow-category
reductions are developed in \cite{LOS,SchutzMorse}. Willis proved stabilization
of the even Khovanov homotopy types of torus links \cite{Willis}.
Our stabilization concerns odd spectra of split unions; its proof uses the
explicit source of the skein sequence, without a smash-product formula.
The three-degree classification used here is a standard special case of
Chang-type classification \cite{Chang,LS14c}; we include a proof to make
its application to the connecting map explicit.

Although $L_{m,n}$ is alternating, it is split and hence not quasi-alternating:
its determinant is zero, whereas quasi-alternating links have positive
determinant \cite{OSz,MO}. Thus the usual quasi-alternating conclusions do not
exclude the $\CP^2$ summands found here. In \Cref{sec:questions} we compare
their number with the number of $\RP^2\wedge\RP^2$ summands in the even spectrum
and formulate the equality as a conjecture.

\subsection{Organization}
\Cref{sec:background,sec:window,sec:classification} give the background,
homology calculation and classification. The skein argument and reduction
occupy \Cref{sec:skein,sec:reduction}, followed by stabilization in
\Cref{sec:stabilization}. \Cref{sec:examples,sec:questions} contain examples,
computational evidence and open questions, and \Cref{sec:mirror} treats the mirror family.

\subsection*{Acknowledgments}
The author thanks Michael Willis and Martin Frankland for many helpful discussions.

\section{Background and conventions}\label{sec:background}

\subsection{Odd Khovanov homology}\label{ss:oddkh}

We work throughout with unreduced odd Khovanov homology $\Kho^{i,q}(L;\Z)$ of
Ozsv\'ath--Rasmussen--Szab\'o \cite{ORS}, in the conventions of \cite{SSS}. Let $D$ be an
oriented link diagram with $n$ ordered crossings, $n_{+}=n_{+}(D)$ of them positive and
$n_{-}=n_{-}(D)$ negative. The cube $\underline{2}^{\,n}$ is the category whose objects are the vertices
$v\in\{0,1\}^{n}$, with a unique morphism $\phi_{u,v}\colon u\to v$ whenever $u\ge v$
coordinatewise; we write $|v|=v_{1}+\dots+v_{n}$, and call $\phi_{u,v}$ an \emph{edge} if
$|u|=|v|+1$. For a vertex $v$ let $D_{v}$ be the resolution of $D$ with the
$v_{i}$-resolution at the $i$-th crossing (\Cref{fig:resolutions}), and $Z(D_{v})$ its set of
circles, ordered in some way.

\begin{figure}[ht]
\centering
\begin{tikzpicture}[scale=0.75, line width=0.6pt, >=stealth]
\draw (-0.6,0.6) -- (0.6,-0.6);
\draw[white, line width=8pt] (-0.2,-0.2) -- (0.2,0.2);
\draw (-0.6,-0.6) -- (0.6,0.6);
\draw[->] (-1.5,0) -- (-4,0) node[midway, above] {\small $0$-resolution};
\begin{scope}[shift={(-9,0)}]
\draw (3.0,0.6) .. controls (3.25,0.3) and (3.25,-0.3) .. (3.0,-0.6);
\draw (4.0,0.6) .. controls (3.75,0.3) and (3.75,-0.3) .. (4.0,-0.6);
\end{scope}
\draw[->] (1.5,0) -- (4,0) node[midway, above] {\small $1$-resolution};
\begin{scope}[shift={(9,0)}]
\draw (-4.0,0.6) .. controls (-3.5,0.15) and (-3.0,0.15) .. (-2.5,0.6);
\draw (-4.0,-0.6) .. controls (-3.5,-0.15) and (-3.0,-0.15) .. (-2.5,-0.6);
\end{scope}
\end{tikzpicture}
\caption{A crossing (middle), its $0$-resolution (left) and its $1$-resolution (right).}
\label{fig:resolutions}
\end{figure}

The odd Khovanov complex $\mathit{CKh}_{o}(D)$ is the direct sum over the vertices of the
exterior algebras $\Lambda(Z(D_{v}))$ over $\Z$, with the bases of \emph{Khovanov generators}
\[
\mathrm{Kg}(v)=\{a_{1}\wedge\cdots\wedge a_{k}\ :\ a_{i}\in Z(D_{v}),\ a_{1}>\cdots>a_{k}\};
\]
its differential is a signed sum of \emph{edge maps} $\Lambda(Z(D_{v}))\to\Lambda(Z(D_{u}))$, one for each
edge $\phi_{u,v}$, induced by the merge or split of circles from $D_{v}$ to $D_{u}$
\cite{ORS}, \cite[\S2.3]{SSS}. A generator $x=a_{1}\wedge\cdots\wedge a_{k}\in\mathrm{Kg}(v)$
has bigrading
\[
i(x)=|v|-n_{-},\qquad q(x)=|Z(D_{v})|-2k+|v|+n_{+}-2n_{-}
\]
\cite[Def.~2.6]{SSS}; the differential raises $i$ by one and preserves $q$, and
$\Kho^{i,q}(L;\Z)$ is the cohomology of $\mathit{CKh}_{o}(D)$ in bidegree $(i,q)$.

We write $\{n\}$ for the shift with $(M\{n\})^{i,q}=M^{i,q-n}$, and put $\delta:=q-2i$. We
recall two results from \cite{ORS}.

\begin{proposition}[{\cite[Proposition 1.8]{ORS}}]\label{prop:orsplit}
For every link $L$ there is a bigraded abelian group $\Khro(L;\Z)$, the reduced odd
Khovanov homology, with
\[
  \Kho(L;\Z)\;\cong\;\Khro(L;\Z)\{-1\}\ \oplus\ \Khro(L;\Z)\{1\}.
\]
\end{proposition}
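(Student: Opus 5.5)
The plan follows the strategy of \cite{ORS} and uses the reduced odd Khovanov complex. Fix a diagram $D$ of $L$ and a basepoint $p$ on an arc of $D$ away from the crossings. In every resolution $D_v$ exactly one circle $a_p(v)\in Z(D_v)$ contains $p$. Define $\widetilde{C}(D)\subset \mathit{CKh}_o(D)$ to be the span, over all vertices, of the elements $a_p(v)\wedge\Lambda(Z(D_v))$; equivalently, these are the generators in which the marked circle appears. The first step is to check that $\widetilde{C}(D)$ is a subcomplex.

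For a merge edge, the edge map is the quotient map $\Lambda(Z(D_v))\to\Lambda(Z(D_u))$ that identifies the two merging circles, so it sends $a_p(v)$ to $a_p(u)$. For a split of a circle $a$ into $a_1,a_2$, the edge map is $\omega\mapsto (a_1-a_2)\wedge\omega$, where $\omega$ is read in the exterior algebra with $a$ replaced by $a_1$ (or $a_2$). If the marked circle is not involved, $a_p$ is preserved. If the marked circle is the one being split, say $a_p(v)=a$ with $a_p(u)=a_1$, then
\[
(a_1-a_2)\wedge a_1\wedge\eta=-a_2\wedge a_1\wedge\eta=(a_1-a_2)\wedge a_2\wedge\eta ,
\]
and this lies in $a_1\wedge\Lambda$ up to the identification. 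The signs in the odd differential are global $\pm1$ factors on edges, so they preserve the subspace.

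The second step is to identify the quotient. Let $\widetilde{C}'(D)$ be the complex whose underlying group at $v$ is $\Lambda(Z(D_v))/(a_p(v))$, with the induced maps. Wedging with $a_p$ gives a map $\widetilde{C}'\to\widetilde{C}$, and it is an isomorphism of groups at each vertex because $\Lambda=\Lambda'\oplus a_p\wedge\Lambda'$, where $\Lambda'$ is generated by the unmarked circles. I would check that it is also a chain map. This requires $(\text{edge map})(a_p\wedge\omega)=a_p\wedge(\text{edge map})(\omega)$ modulo the ideal. The identity holds because, in the quotient by $a_p$ (where $a_1\equiv0$ in the split case), $a_1-a_2$ acts as $-a_2$, and this matches the computation above up to a uniform sign.

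This gives a short exact sequence of complexes
\[
0\to \widetilde{C}(D)\to \mathit{CKh}_o(D)\to \widetilde{C}(D)\{\cdot\}\to 0 .
\]
Here the generator $a_p\wedge x$ has quantum grading two lower than $x$. With the normalisation $\widetilde{C}(D):=$ (sub)$\{1\}$, the sub is $\widetilde{C}\{-1\}$ and the quotient is $\widetilde{C}\{1\}$. Define $\Khro(L)$ as the homology of $\widetilde{C}$ with this shift.

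The main obstacle is splitting the sequence. I would follow \cite{ORS} and use the map $\omega\mapsto a_p\wedge\omega$ on $\mathit{CKh}_o(D)$ (with a vertexwise sign, if needed, to anticommute with the differential). This map has square zero, and the computation above shows it is a chain map up to sign. Its image and kernel are both the subcomplex. This identifies the sequence with $0\to \widetilde{C}\to C\to \widetilde{C}\to 0$, where the composite $C\to C/\widetilde{C}\cong\widetilde{C}\hookrightarrow C$ is $a_p\wedge$. To produce a section, move the basepoint $p$ to a nearby point $p'$ across a single crossing, or use a second marked point on the same component. Then $\omega\mapsto (a_p-a_{p'})$-contraction gives a chain-level splitting, and the connecting homomorphism vanishes. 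If that fails, I fall back on \cite{ORS}: the connecting map is given by multiplication by an element $x_p-x_{p'}$, which is nullhomotopic by the standard basepoint-moving homotopy. Finally, invariance of $\widetilde{C}$ under Reidemeister moves (away from $p$) and independence of the component carrying $p$ follow from the splitting itself, since $\Khro$ is determined as a direct summand.
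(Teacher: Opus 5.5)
Your construction of the short exact sequence is fine, but the splitting is not established, and the splitting is the whole content of the statement. The even analogue is false over $\Z$: for the trefoil, $\Kh$ has $2$-torsion while $\Khr$ is free. So a proof has to use something specific to the exterior algebra, and yours does not.

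A section of $a_p\wedge-\colon C\to a_p\wedge C$ must lower exterior degree by one. The natural candidates are contractions $\iota_f$ by functionals $f_v$ on the span of the circles, with $f_v(a_p)=1$. For $\iota_f$ to intertwine, up to sign, a merge map (induced by the quotient identifying the two merging circles) or a split map $\omega\mapsto(a_1-a_2)\wedge\tilde\omega$, $f$ must take equal values on the two circles involved.

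Your $a_p^*-a_{p'}^*$ fails this at every merge of the marked circle with an unmarked one. It also vanishes on $a_p$ whenever $p$ and $p'$ lie on the same circle, so it is not a section there. Read instead as wedging with $a_p-a_{p'}$, it raises degree and cannot be a section. Your fallback, that the connecting map is multiplication by $x_p-x_{p'}$ and is null-homotopic by moving the basepoint, is asserted rather than proved. Nothing in it distinguishes the odd complex from the even one, where the conclusion fails.

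The missing idea is to take for $f$ the augmentation $\epsilon$, equal to $1$ on every circle; it satisfies the compatibility condition at every edge. Then $\iota_\epsilon$ commutes with merge maps and anticommutes with split maps, exactly like $a_p\wedge-$. This also corrects your claim of a uniform sign: $a_p\wedge-$ picks up $-1$ precisely on split edges, whether or not the split involves $a_p$.

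Every square face of the cube contains an even number of split edges. Hence there is a vertexwise sign $s(v)$ that changes exactly along split edges, and both $s\,\iota_\epsilon$ and $s\,(a_p\wedge-)$ are honest chain maps. Since $\epsilon(a_p)=1$, the derivation rule gives $\iota_\epsilon(a_p\wedge z)+a_p\wedge\iota_\epsilon(z)=z$, and the twist does not change this because $s(v)^2=1$.

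Consequently $a_p\wedge\iota_\epsilon(y)=y$ for every $y$ in $a_p\wedge C=\ker(a_p\wedge-)$. So $s\,\iota_\epsilon$ restricted to the subcomplex is a chain-level section, and the sequence splits as complexes, with no second basepoint and no homotopy. The paper gives no argument of its own for this statement; it only quotes \cite[Proposition 1.8]{ORS}.
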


\begin{proposition}[{\cite[Definition 5.1, Proposition 5.2 and the remark following it]{ORS}}]\label{prop:orqa}
If $L$ is quasi-alternating, then $\Khro(L;\Z)$ is free of rank $\det(L)$ and supported
on the $\delta$-diagonal $\delta=\sigma(L)$, where the signature $\sigma$ is normalized so
that positive links have positive signature.
\end{proposition}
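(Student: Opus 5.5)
This statement is quoted from \cite{ORS}, so the plan is to reconstruct the argument of Ozsv\'ath--Rasmussen--Szab\'o rather than to supply a new one. The approach is induction on the number of crossings through the unoriented skein exact triangle. This requires the reduced theory $\Khro$ in its unoriented form, with $\delta$-gradings shifted by the standard correction terms.

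\textbf{Base case.} The unknot has $\Khro=\Z$ in bidegree $(0,0)$. This is free of rank $1=\det$ and lies on $\delta=0=\sigma$. The unreduced analogue of this computation is where \Cref{prop:orsplit} gets checked on the generator.

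\textbf{Skein exact sequence.} Let $L$ be quasi-alternating at a crossing $c$. Then the two resolutions $L_0$ and $L_1$ are quasi-alternating with $\det L=\det L_0+\det L_1$. Filtering the odd cube by the coordinate of $c$ exhibits $\mathit{CKh}_o(D)$ as a mapping cone of a chain map from the complex of one resolution to the complex of the other, with grading shifts. Reducing at a marked point away from $c$ gives the long exact sequence
\[
\cdots\to\Khro(L_1)\to\Khro(L)\to\Khro(L_0)\to\cdots.
\]
Concretely, the marked point is killed by tensoring with $\Z$ over the action of the marked circle. The induction hypothesis places each of $\Khro(L_0)$ and $\Khro(L_1)$ on a single $\delta$-diagonal, free of ranks $\det L_0$ and $\det L_1$.

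\textbf{Main obstacle: bookkeeping the diagonals.} I must show that the grading shifts in the sequence, rewritten in $\delta$, put both pieces on the same diagonal $\delta=\sigma(L)$. I will use the Manolescu--Ozsv\'ath signature relation for quasi-alternating crossings, $\sigma(L_0)$ and $\sigma(L_1)$ each differing from $\sigma(L)$ by $\pm1$ depending on orientations, together with the count $e$ of negative crossings lost in the oriented resolution. The goal is for the connecting map to shift $\delta$ by exactly the amount that makes the terms $\Khro(L_1)[\ast]$ and $\Khro(L_0)[\ast]$ land on a common diagonal. The connecting homomorphism raises $i$ by one while preserving $q$, so it lowers $\delta$ by $2$. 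Since everything is concentrated on a single diagonal, the connecting map must then vanish for degree reasons. The long exact sequence therefore splits into short exact sequences
\[
0\to\Khro(L_1)\to\Khro(L)\to\Khro(L_0)\to0
\]
on that diagonal. This gives freeness, because an extension of free groups is free, the rank $\det L_0+\det L_1=\det L$, and support on $\delta=\sigma(L)$.

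The signature normalization, positive links having positive signature, must be matched against the convention for $q$ from \cite{SSS}. That is a sign check on the base case of the Hopf link or trefoil.
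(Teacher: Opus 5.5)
Your route is the one behind the citation. The paper gives no proof of its own, and \cite{ORS} obtain the statement by running the Manolescu--Ozsv\'ath induction \cite{MO} through the odd skein exact sequence. The connecting map is killed for $\delta$-grading reasons, exactly as you describe. Two points need repair.

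First, induct on $\det L$, or structurally on the recursive definition, not on the number of crossings. $L_0$ and $L_1$ are quasi-alternating as links, but the diagrams witnessing this need not be $D_0$ and $D_1$, and they can have more crossings than $D$. Since $\det L_0,\det L_1\ge1$ and $\det L_0+\det L_1=\det L$, both determinants are strictly smaller. A quasi-alternating link of determinant $1$ is the unknot, which is your base case.

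Second, the signature input is misstated, and it is the crux of the argument. Take a positive quasi-alternating crossing, with $L_0$ the oriented resolution and $e=n_-(L_1)-n_-(L)$; note that $e$ belongs to the unoriented resolution, not the oriented one. In the slot of bidegree $(i,q)$ of $\Khro(L)$, the sequence places the following terms:
\begin{itemize}
\item the $L_0$-term $\Khro^{\,i,q-1}(L_0)$, of $\delta$-grading $q-2i-1$;
\item the $L_1$-term $\Khro^{\,i-e-1,\,q-3e-2}(L_1)$, of $\delta$-grading $q-2i-e$.
\end{itemize}
It is these shifts, not the connecting map, that must align the diagonals. What you need is
\[
\sigma(L_0)=\sigma(L)-1,\qquad \sigma(L_1)=\sigma(L)-e,
\]
not that both signatures differ from $\sigma(L)$ by $\pm1$. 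For example, for $T(2,3)$ one has $\sigma=2$, $\sigma(T(2,2))=1$, $\sigma(U)=0$ and $e=2$.

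These equalities are the Manolescu--Ozsv\'ath lemma. They are where the hypothesis $\det L=\det L_0+\det L_1$ with $\det L_0,\det L_1>0$ is used beyond the rank count; without it they can fail. A negative crossing is handled by the other form of the sequence together with the corresponding relation.

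With these two fixes, your vanishing argument for the connecting map, the splitting into short exact sequences of free groups, and the rank count go through as written.
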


Two things distinguish Proposition~\ref{prop:orsplit} from its even analogue. First,
$\Khro$ requires no choice of basepoint: reduced Khovanov homology depends on a chosen
component of $L$, and different choices can give different answers, whereas $\Khro$ is a
link invariant. Second, the even statement is not a splitting at all, but a long exact
sequence relating $\Kh$ and $\Khr$ whose connecting map is in general non-zero; the odd
splitting holds over $\Z$.

Now we compute odd Khovanov homology for $T(2,k)$ using these propositions.
\begin{lemma}\label{lem:torusknot}
For $k\ge1$,
\[
\Kho^{\,i,q}\bigl(T(2,k);\Z\bigr)\ \cong\ \begin{cases}\Z, & i\in S_k\ \text{ and }\ q\in\{k+2i-2,\,k+2i\},\\ 0,&\text{otherwise.}\end{cases}
\]
In particular the total rank is $2k$, and no homological degree $1$ class occurs.
\end{lemma}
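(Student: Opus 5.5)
The plan is to combine \Cref{prop:orqa} and \Cref{prop:orsplit} with a graded Euler characteristic computation. The key point is that homology supported on a single $\delta$-diagonal is determined by its Euler characteristic.

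\emph{Freeness and support.} First we note that $T(2,k)$ is quasi-alternating for every $k\ge1$. For $k=1$ it is the unknot. For $k\ge2$ it is a nonsplit alternating link, and nonsplit alternating links are quasi-alternating. Its determinant is $k$. By \Cref{prop:orqa}, $\Khro(T(2,k);\Z)$ is therefore free of rank $k$ and supported on the diagonal $q-2i=\sigma$. With the paper's normalization, positive links have positive signature, so we expect $\sigma=k-1$ for the positive $T(2,k)$. I would double-check this value against the standard diagram: it has $k$ positive crossings, so $n_-=0$, and in the all-$0$ resolution the generator $1\in\Lambda$ lies in bidegree $(0,k)$. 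Since $q-2i$ takes only the one value $\sigma$, each group $\Khro^{i,*}$ sits in the single quantum grading $q=2i+\sigma$.

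\emph{Ranks via Euler characteristic.} Because each $i$ contributes to only one $q$, the graded Euler characteristic $\sum_{i,q}(-1)^i\rk\Khro^{i,q}\,t^q$ has no cancellation. Hence
\[
\rk\Khro^{i,2i+\sigma}=\bigl|\text{coefficient of } t^{2i+\sigma}\bigr|.
\]
Odd and even Khovanov homology have the same graded Euler characteristic, namely the (reduced) Jones polynomial, since their chain groups have the same ranks in each bidegree. So it suffices to know the reduced Jones polynomial of $T(2,k)$ in these conventions. I would compute it by the Kauffman bracket skein relation at one crossing of the standard diagram. This gives a two-term recursion in $k$, starting from the unknot ($k=1$) and the Hopf link ($k=2$). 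The result should be
\[
\sum_{i\in S_k}(-1)^i t^{k-1+2i}.
\]
This has exactly $k=|S_k|$ monomials, all with coefficient $\pm1$ and none in degree $i=1$, which matches the rank $\det=k$. Consequently $\Khro^{i,q}(T(2,k))\cong\Z$ exactly when $i\in S_k$ and $q=k-1+2i$, and it vanishes otherwise.

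\emph{Unreduced groups.} Finally, applying \Cref{prop:orsplit} shifts each of these generators to $q=k+2i-2$ and $q=k+2i$. This gives the stated groups, total rank $2k$, and the absence of degree $1$.

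The main obstacle is bookkeeping of conventions rather than anything conceptual. We must fix the sign and normalization of $\sigma$ in \Cref{prop:orqa}, the orientation convention for the positive torus link, and the exact grading shift of the Jones polynomial in the conventions of \cite[Def.~2.6]{SSS}. I would anchor all of these by two direct checks. The first is the $k=1$ case, where $\Kho$ of the unknot sits in bidegrees $(0,\pm1)$. The second is the minimal and maximal bidegrees of the chain complex of the standard diagram: the all-$0$ resolution in bidegree $(0,\ast)$, and the all-$1$ resolution in degree $k$ with $q$ up to $2k+2$. These checks pin down $\sigma=k-1$ and confirm the extreme generators at $(0,k-2),(0,k)$ and $(k,3k-2),(k,3k)$.
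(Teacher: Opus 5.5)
Your argument is the paper's own: \Cref{prop:orqa} gives freeness and support on the diagonal $\delta=k-1$; the single diagonal rules out cancellation in the graded Euler characteristic, so the ranks are the coefficients of the Jones polynomial computed by the Kauffman bracket; and \Cref{prop:orsplit} passes to the unreduced groups. There is one harmless slip in the sanity checks you propose for the conventions, which nothing in the proof depends on. The all-$0$ (oriented) resolution of the standard diagram has two circles, so the chain generator $1$ sits in bidegree $(0,k+2)$, not $(0,k)$; the all-$1$ resolution has $k$ circles, so its quantum gradings reach $3k$, not $2k+2$; the homology extremes $(0,k-2),(0,k),(k,3k-2),(k,3k)$ that you list are nonetheless correct.
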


\begin{proof}

$T(2,k)$ is alternating and non-split, hence quasi-alternating, so \Cref{prop:orqa}
applies: $\Khro\bigl(T(2,k);\Z\bigr)$ is free of rank $\det T(2,k)=k$ and supported on
the diagonal $\delta=\sigma\bigl(T(2,k)\bigr)=k-1$, that is, in bidegrees $(i,2i+k-1)$.

Being free and concentrated on one diagonal, it is determined by its graded Euler
characteristic, which is the normalized Jones polynomial:
\[
\sum_{i}(-1)^{i}\,\rk\Khro^{\,i,\,2i+k-1}\bigl(T(2,k)\bigr)\,q^{\,2i+k-1}\ =\ J_{T(2,k)}(q).
\]
No cancellation can occur between distinct $i$, since distinct $i$ give distinct powers
of $q$; so the ranks are exactly the absolute values of the coefficients of
$J_{T(2,k)}$. An induction on $k$ using the Kauffman bracket skein relation gives
\begin{equation}\label{eq:jonestorus}
J_{T(2,k)}(q)\ =\ q^{\,k-1}\ +\ \sum_{i=2}^{k}(-1)^{i}q^{\,2i+k-1}\ =\ \sum_{i\in S_k}(-1)^{i}q^{\,2i+k-1},
\end{equation}
a polynomial with $k$ non-zero coefficients, all equal to $\pm1$. Hence
$\Khro^{\,i,2i+k-1}(T(2,k);\Z)\cong\Z$ for $i\in S_k$ and vanishes otherwise. The
homological degrees are therefore exactly $S_k=\{0\}\cup\{2,\dots,k\}$; the absent
degree $1$ is the absent monomial $q^{\,k+1}$ in \eqref{eq:jonestorus}, the first term of
the alternating tail starting instead at $i=2$.

Finally, by \Cref{prop:orsplit} the unreduced group is
$\Khro\{-1\}\oplus\Khro\{1\}$, so each generator in bidegree $(i,2i+k-1)$ contributes a
$\Z$ in bidegrees $(i,2i+k-2)$ and $(i,2i+k)$. This is the stated answer, of total rank
$2k$.
\end{proof}

For split unions the algebraic K\"unneth theorem gives the following.
\begin{lemma}\label{lem:kunneth}
Let $L_1$ and $L_2$ be links.
\begin{enumerate}[label=\textup{(\roman*)}]
\item For all $i,q$ there is a short exact sequence
\begin{equation}\label{eq:kunneth-ses}
\begin{split}
0&\longrightarrow\bigoplus_{\substack{i_1+i_2=i\\ q_1+q_2=q}}
   \Kho^{i_1,q_1}(L_1;\Z)\otimes_{\Z}\Kho^{i_2,q_2}(L_2;\Z)
   \longrightarrow\Kho^{i,q}(L_1\sqcup L_2;\Z)\\[2pt]
 &\longrightarrow\bigoplus_{\substack{i_1+i_2=i+1\\ q_1+q_2=q}}
   \Tor^{\Z}_{1}\bigl(\Kho^{i_1,q_1}(L_1;\Z),\Kho^{i_2,q_2}(L_2;\Z)\bigr)
   \longrightarrow0.
\end{split}
\end{equation}
\item If at least one of $L_1,L_2$ is quasi-alternating, the $\Tor$ term vanishes and
\eqref{eq:kunneth-ses} reduces to an isomorphism of bigraded groups
\begin{equation}\label{eq:kunneth}
\Kho\bigl(L_1\sqcup L_2;\Z\bigr)\ \cong\ \Kho(L_1;\Z)\otimes_{\Z}\Kho(L_2;\Z).
\end{equation}
\end{enumerate}
\end{lemma}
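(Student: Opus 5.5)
The plan is to realize $\mathit{CKh}_{o}(D_1\sqcup D_2)$ as the tensor product of the two odd complexes and then apply the algebraic K\"unneth theorem. Part (ii) will then follow by a freeness argument.

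\textbf{Step 1: the chain-level identification.} Take diagrams $D_1,D_2$ drawn in disjoint disks, with crossings ordered so that those of $D_1$ come first. Then the cube $\underline{2}^{\,n_1+n_2}$ is the product of the two cubes, and for $v=(v_1,v_2)$ we have $Z(D_v)=Z(D_{1,v_1})\sqcup Z(D_{2,v_2})$. Since the exterior algebra of a disjoint union is the graded tensor product,
\[
\Lambda\bigl(Z(D_v)\bigr)\cong\Lambda\bigl(Z(D_{1,v_1})\bigr)\otimes\Lambda\bigl(Z(D_{2,v_2})\bigr).
\]
The generators match by $a\wedge b\leftrightarrow a\otimes b$, once the circles of $D_1$ are ordered above those of $D_2$. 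The bigradings add: $|v|$, $|Z|$, $k$, $n_\pm$ are all additive, so $i$ and $q$ are additive.

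\textbf{Step 2: the differentials, which is the main obstacle.} An edge changing a crossing of $D_2$ merges or splits circles of $D_2$ only. The odd edge map is left multiplication by a circle (for a split) or the quotient map (for a merge). Commuting such an operation past a factor $a\in\Lambda^{k}(Z(D_1))$ introduces a Koszul sign $(-1)^{k}$ at worst.

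The other ingredient is the choice of sign assignment on the cube. The odd complex is well defined up to isomorphism for any valid (type X/Y) sign assignment \cite{ORS}. So I would choose the sign assignment $\epsilon(v_1,e_2)=(-1)^{|v_1|}\epsilon_2(e_2)$ on $D_2$-edges, and $\epsilon_1$ on $D_1$-edges. Here the faces mixing one edge from each factor are commutative squares whose anticommutation in the odd setting has to be checked.

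I expect that the mixed faces are of the commuting type, and that with the Koszul twist the product assignment is valid. The resulting complex is then the tensor product complex $\mathit{CKh}_{o}(D_1)\otimes\mathit{CKh}_{o}(D_2)$, with a sign involving $(-1)^{|v_1|+k}$ absorbed by a diagonal change of basis $x\otimes y\mapsto \pm x\otimes y$. If the sign bookkeeping fails, the fallback is to cite the split-union statement in \cite{ORS} or \cite[\S2]{SSS}, where the odd complex of a disjoint union is identified with a tensor product of complexes.

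\textbf{Step 3: K\"unneth.} Both complexes are finitely generated free abelian groups, and the differential preserves $q$. The K\"unneth theorem for free chain complexes over $\Z$, applied in each quantum grading and summed over $q_1+q_2=q$, gives \eqref{eq:kunneth-ses}. The cochain convention accounts for the degree shift $i_1+i_2=i+1$ in the $\Tor$ term.

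For (ii), if $L_1$ is quasi-alternating then $\Kho(L_1;\Z)$ is free by \Cref{prop:orqa} and \Cref{prop:orsplit}. Hence every $\Tor$ term vanishes and the first map of \eqref{eq:kunneth-ses} is an isomorphism.
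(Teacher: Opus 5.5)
The skeleton is the paper's: a chain isomorphism $\mathit{CKh}_o(D_1\sqcup D_2)\cong\mathit{CKh}_o(D_1)\otimes_{\Z}\mathit{CKh}_o(D_2)$ with additive bigradings, then the K\"unneth theorem for free complexes in each quantum grading, then freeness of $\Kho(L_1;\Z)$ for (ii). The paper does not prove the chain isomorphism. It cites Putyra's disjoint-union formula \cite[Theorem~6.7]{PutyraTriply} in its integral odd specialization \cite[Theorem~2.10]{DLS}. Your Steps~1 and~3 and part (ii) are fine. Your direct argument for Step~2, however, fails as written.

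\emph{The face types are wrong.} A split is left multiplication by the degree-one element $a_1-a_2$ (after lifting). So a mixed face whose two edges are both splits anticommutes:
\[
\alpha_1\wedge\alpha_2\wedge x=-\,\alpha_2\wedge\alpha_1\wedge x.
\]
Only mixed faces involving a merge commute. Your assignment ($\epsilon_1$ on $D_1$-edges, $(-1)^{|v_1|}\epsilon_2$ on $D_2$-edges) has product $-1$ around every mixed face. On split--split faces the two composites therefore add instead of cancelling, and $\partial^2\neq0$. So it is not a sign assignment for $D_1\sqcup D_2$.

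\emph{The Koszul sign cannot be absorbed on its own.} A diagonal change of basis leaves the product of matrix signs around every square of generators unchanged. The Koszul sign $(-1)^{k}$ on $D_2$-splits has product $-1$ around the squares in split--split mixed faces, because $k$ rises by one along a $D_1$-split. So it cannot be removed by a change of basis by itself. The two discrepancies do cancel each other, but only in combination, and that needs an argument.

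\emph{The missing observation.} The union differential preserves $q_1$ and $q_2$ separately. On a summand $\mathit{CKh}_o^{q_1}(D_1)\otimes\mathit{CKh}_o^{q_2}(D_2)$ the exterior degree
\[
k_1=\tfrac12\bigl(|Z(D_{1,v_1})|+|v_1|+n_+(D_1)-2n_-(D_1)-q_1\bigr)
\]
depends only on $v_1$. Hence the Koszul sign becomes a function $\sigma$ on the edges of the cube.

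\emph{The repair.} Take any valid sign assignment $\epsilon$ for $D_1\sqcup D_2$, with $\epsilon_1,\epsilon_2$ of the same type (X or Y), and let $\epsilon'$ be your product assignment. Then $\epsilon\sigma\epsilon'$ has product $+1$ around every face:
\begin{itemize}
\item On mixed faces, $\epsilon$ has product $+1$ exactly on the split--split faces, while $\sigma$ has product $-1$ exactly there and $\epsilon'$ has product $-1$ everywhere.
\item On unmixed faces, $\epsilon$ and $\epsilon'$ have the same products, since the face types are those of $D_1$ or $D_2$. Also $\sigma$ contributes $+1$, since both paths of a face contain the same number of splits.
\end{itemize}
The cube is contractible, so $(\epsilon\sigma\epsilon')(e)=\theta(u)\theta(v)$ for every edge $e$ from $u$ to $v$, for some function $\theta$ on vertices. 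Multiplication by $\theta$ is the required isomorphism of complexes. With this repair, or with a citation that actually states the chain-level isomorphism (such as the paper's), the rest of your proof goes through.
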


\begin{proof}
(i) For diagrams $D_1,D_2$ in disjoint discs, Putyra's disjoint-union formula
\cite[Theorem~6.7]{PutyraTriply}, in its integral odd specialization
\cite[Theorem~2.10]{DLS}, gives an isomorphism of bigraded complexes
\begin{equation}\label{eq:kunneth-chain}
\mathit{CKh}_o(D_1\sqcup D_2)
\cong\mathit{CKh}_o(D_1)\otimes_{\Z}\mathit{CKh}_o(D_2).
\end{equation}
The bigradings match without shift because
$n_{\pm}(D_1\sqcup D_2)=n_{\pm}(D_1)+n_{\pm}(D_2)$.
Apply the algebraic K\"unneth theorem for free complexes
\cite[Theorem~3.6.1]{Weibel}, separately in each quantum grading.
The $\Tor$ term has $i_1+i_2=i+1$ because our differential is
cohomological.

(ii) Say $L_1$ is quasi-alternating. By \Cref{prop:orqa} the group $\Khro(L_1;\Z)$ is
free, so by \Cref{prop:orsplit}
$\Kho(L_1;\Z)\cong\Khro(L_1;\Z)\{-1\}\oplus\Khro(L_1;\Z)\{1\}$ is a direct sum of two
free abelian groups, hence free. Hence the $\Tor$ term of \eqref{eq:kunneth-ses} vanishes and the left-hand map is an isomorphism. Assembling
over all $i,q$ gives \eqref{eq:kunneth}.
\end{proof}

Since $T(2,m)$ is quasi-alternating (see the proof of \Cref{lem:torusknot}),
\Cref{lem:kunneth}(ii) applies to the links of this paper.
\begin{corollary}\label{cor:kunneth-torus}
For all $m,n\ge1$,
\[
\Kho\bigl(T(2,m)\sqcup T(2,n);\Z\bigr)\ \cong\
\Kho\bigl(T(2,m);\Z\bigr)\otimes_{\Z}\Kho\bigl(T(2,n);\Z\bigr),
\]
a free abelian group.
\end{corollary}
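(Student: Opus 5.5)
The statement to prove is \Cref{cor:kunneth-torus}. It follows by specializing \Cref{lem:kunneth}(ii) to $L_1=T(2,m)$ and $L_2=T(2,n)$.

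First I will check the quasi-alternating hypothesis. For $k\ge1$ the standard two-strand diagram of $T(2,k)$ is alternating. It is reduced and connected, hence non-split. Non-split alternating links are quasi-alternating, which is the fact already used in the proof of \Cref{lem:torusknot}. The degenerate case $k=1$ is the unknot, which is quasi-alternating by definition. So at least one (indeed both) of the two pieces is quasi-alternating. \Cref{lem:kunneth}(ii) then makes the $\Tor$ term in \eqref{eq:kunneth-ses} vanish and gives the bigraded isomorphism
\[
\Kho\bigl(T(2,m)\sqcup T(2,n);\Z\bigr)\cong\Kho\bigl(T(2,m);\Z\bigr)\otimes_{\Z}\Kho\bigl(T(2,n);\Z\bigr).
\]

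Second, I will show the result is free. By \Cref{lem:torusknot}, each factor is free abelian of finite rank $2k$, with one copy of $\Z$ in each bidegree $(i,q)$ with $i\in S_k$ and $q\in\{k+2i-2,k+2i\}$. A tensor product of free abelian groups is free, with basis the tensor products of basis elements. Hence the right-hand side is free of total rank $4mn$. Its bigraded ranks are given by convolving the two rank functions, which is what will feed the window computation later.

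The main obstacle is not in this corollary but in the input to \Cref{lem:kunneth}: the chain-level disjoint-union isomorphism \eqref{eq:kunneth-chain} for odd Khovanov complexes. There the sign assignments on the cube of $D_1\sqcup D_2$ must be identified with the tensor-product signs, using the Koszul sign rule on the exterior algebras. Since that is taken as given via \cite{PutyraTriply,DLS}, the only point needing care here is the quasi-alternating claim for small $k$, and the argument above covers it.
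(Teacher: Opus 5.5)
Your proposal is correct and matches the paper: the corollary is just \Cref{lem:kunneth}(ii) applied with the quasi-alternating factor $T(2,m)$, as noted in the proof of \Cref{lem:torusknot}, and freeness follows because both factors are free by \Cref{lem:torusknot}. One small slip: the standard one-crossing diagram of $T(2,1)$ is not reduced, so the phrase ``for $k\ge1$ \ldots\ it is reduced'' is inaccurate, but your separate treatment of $k=1$ as the unknot already covers that case.
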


\subsection{The odd stable homotopy type}\label{ss:oddspectrum}

Sarkar, Scaduto and Stoffregen \cite{SSS} construct, for each quantum grading $q$, a spectrum
$\Xo^{q}(L)$ with $\widetilde H^{\,i}(\Xo^{q}(L);\Z)\cong\Kho^{\,i,q}(L;\Z)$, adapting the
Burnside category approach of \cite{LLS}. We recall what is used in \Cref{sec:skein}.

A \emph{signed correspondence} from a finite set $X$ to a finite set $Y$ is a finite set $A$
with maps $s\colon A\to X$, $t\colon A\to Y$ and $\sigma\colon A\to\{\pm1\}$, the source,
target and sign. Correspondences compose by fiber product,
$B\circ A=\{(b,a) : t(a)=s(b)\}$, with the signs multiplied, and finite sets, signed
correspondences and the bijections between them form the signed Burnside category
$\mathscr B_{\sigma}$ \cite[\S3.2]{SSS}. A \emph{signed Burnside functor}
$F\colon\underline{2}^{\,n}\to\mathscr B_{\sigma}$ assigns a finite set $F(v)$ to each vertex
and a signed correspondence $F(\phi_{u,v})$ from $F(u)$ to $F(v)$ to each $u\ge v$, compatibly
with composition up to coherent bijections \cite[\S3.5]{SSS}. The \emph{odd Khovanov Burnside
functor} $F_{o}=F_{o}(D)$ has $F_{o}(v)=\mathrm{Kg}(v)$ and, on an edge $\phi_{u,v}$,
\[
F_{o}(\phi_{u,v})=\{(y,x)\in\mathrm{Kg}(v)\times\mathrm{Kg}(u)\ :\ \varepsilon_{x,y}\ne0\},
\]
with $s(y,x)=x$, $t(y,x)=y$ and $\sigma(y,x)=\varepsilon_{x,y}$, where
$\varepsilon_{x,y}\in\{-1,0,1\}$ is the coefficient of $x$ in the image of $y$ under the edge
map of $\mathit{CKh}_{o}(D)$ \cite[\S5.1]{SSS}. It splits as $F_{o}=\bigsqcup_{q}F_{o}^{q}$ by
quantum grading.

\emph{Realization.} A signed correspondence $A$ from $X$ to $Y$ is realized by a \emph{box
map} from $\bigvee_{X}S^{k}$ to $\bigvee_{Y}S^{k}$: the sphere of $x\in X$ is collapsed onto disjoint
sub-boxes, one for each $a\in s^{-1}(x)$, and the sub-box of $a$ is mapped onto the sphere of
$t(a)$ by the identity if $\sigma(a)=+1$ and by the reflection in the first coordinate if
$\sigma(a)=-1$. On $\widetilde H_{k}$ it sends $x$ to $\sum_{a\in s^{-1}(x)}\sigma(a)\,t(a)$
\cite[\S4.1]{SSS}. For $k$ large, box maps realizing the correspondences of $F$ assemble into a
homotopy coherent cube of wedges of $k$-spheres, unique up to equivalence; its iterated
mapping cone $\|F\|_{k}$ is a CW complex with one $(|v|+k)$-cell for each $x\in F(v)$
\cite[\S4]{SSS}. For $r\in\Z$ one puts $|\Sigma^{r}F|=\Sigma^{r-k}\Sigma^{\infty}\|F\|_{k}$.

\begin{definition}[{\cite[Def.~5.2]{SSS}}]\label{def:oddspectrum}
The odd Khovanov spectrum of $D$ in quantum grading $q$ is
$\Xo^{q}(D)=\bigl|\Sigma^{-n_{-}}F_{o}^{q}(D)\bigr|$.
\end{definition}

By \cite[Thm.~1.1]{SSS} its homotopy type is a link invariant, and its reduced cellular
cochain complex is $\mathit{CKh}_{o}^{*,q}(D)$, with the cells corresponding to the Khovanov
generators.

\begin{definition}[{\cite[\S3.7, Def.~3.4]{SSS}}]\label{def:cofseq}
A \emph{sub-functor} of $F\colon\underline{2}^{\,n}\to\mathscr B_{\sigma}$ is a signed
Burnside functor $G\colon\underline{2}^{\,n}\to\mathscr B_{\sigma}$ such that
\begin{enumerate}[label=\textup{(\roman*)}]
\item $G(v)\subset F(v)$ for every $v$;
\item $G(\phi_{u,v})\subset F(\phi_{u,v})$ for every $u\ge v$, with source, target and sign
restricted from $F(\phi_{u,v})$;
\item $s^{-1}(x)\subset G(\phi_{u,v})$ for every $u\ge v$ and every $x\in G(u)$, where
$s\colon F(\phi_{u,v})\to F(u)$ is the source map of $F$.
\end{enumerate}
Then $H(v)=F(v)\smallsetminus G(v)$ and $H(\phi_{u,v})=t^{-1}\bigl(H(v)\bigr)\subset
F(\phi_{u,v})$ define a signed Burnside functor $H$, the \emph{quotient} of $F$ by $G$, and
$G\to F\to H$ is called a \emph{cofibration sequence} of signed Burnside functors.
\end{definition}

\begin{lemma}\label{lem:realization}
Let $r\in\Z$.
\begin{enumerate}[label=\textup{(\roman*)}]
\item A cofibration sequence $G\to F\to H$ of signed Burnside functors realizes to a
cofibration sequence of spectra $|\Sigma^{r}G|\to|\Sigma^{r}F|\to|\Sigma^{r}H|$
\cite[Lem.~4.15]{SSS}.
\item For the disjoint union $F_{1}\sqcup F_{2}$ of signed Burnside functors, formed
vertexwise and edgewise,
$|\Sigma^{r}(F_{1}\sqcup F_{2})|\simeq|\Sigma^{r}F_{1}|\wed|\Sigma^{r}F_{2}|$
\cite[Lem.~4.25]{SSS}.
\end{enumerate}
\end{lemma}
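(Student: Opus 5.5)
The plan is to deduce both parts from the construction of $\|F\|_k$ in \cite[\S4]{SSS}, reducing each to a statement about CW complexes for one fixed large $k$. Afterwards I apply the exact functor $\Sigma^{r-k}\Sigma^\infty$. Since the realization is independent of choices up to equivalence, I am free to choose the box maps in a way adapted to the decomposition.

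For (i), fix $k$ large and choose box maps realizing the correspondences $F(\phi_{u,v})$, together with the higher coherence homotopies, forming a homotopy coherent cube $v\mapsto\bigvee_{F(v)}S^k$. Condition (iii) of \Cref{def:cofseq} says every sub-box in the sphere of $x\in G(u)$ is indexed by an element $a\in G(\phi_{u,v})$, and by (ii) its target $t(a)$ lies in $G(v)$. Hence the box map of $F(\phi_{u,v})$ carries $\bigvee_{G(u)}S^k$ into $\bigvee_{G(v)}S^k$, and there it is exactly a box map realizing $G(\phi_{u,v})$. The coherence homotopies of \cite{SSS} are built from families of box maps, with sub-boxes moving according to the composite correspondences. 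So the same closure argument shows that they restrict to coherence data for $G$. Then the restricted cube realizes $G$, and its iterated mapping cone $\|G\|_k$ is a CW subcomplex of $\|F\|_k$: it consists exactly of the cells indexed by generators in $G(v)$. The quotient $\|F\|_k/\|G\|_k$ is the iterated mapping cone of the induced cube on $\bigvee_{H(v)}S^k$. Collapsing $\bigvee_{G(v)}S^k$ turns each box map into the map that keeps only the sub-boxes with $t(a)\in H(v)$, which is a box map for $H(\phi_{u,v})=t^{-1}(H(v))$. Thus the quotient is a realization $\|H\|_k$. A CW inclusion is a cofibration with cofiber the quotient, so $\|G\|_k\to\|F\|_k\to\|H\|_k$ is a cofiber sequence. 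Applying $\Sigma^{r-k}\Sigma^\infty$ preserves cofiber sequences, which proves (i).

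For (ii), choose realizing data for $F_1$ and $F_2$ independently and take their vertexwise wedge. Box maps of a disjoint union of correspondences are wedges of box maps, and likewise for the coherence homotopies, so this wedge is a valid realization of $F_1\sqcup F_2$. The iterated mapping cone is a homotopy colimit, hence commutes with wedges (coproducts of based spaces). This gives $\|F_1\sqcup F_2\|_k\cong\|F_1\|_k\wed\|F_2\|_k$, and $\Sigma^\infty$ preserves wedges.

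The main obstacle is the restriction claim for the higher coherence data in (i). The box-map part is immediate from conditions (ii) and (iii); the care lies in checking that the homotopies between composites stay inside the $G$-wedges. My first approach would be to note that \cite{SSS} build all coherences from the spaces of box maps realizing a fixed correspondence, and that the structure maps of a subfunctor are compatible with those spaces. Failing that, I would simply cite \cite[Lem.~4.15, Lem.~4.25]{SSS}, where these statements are proved.
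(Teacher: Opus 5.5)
Your argument is correct. It differs from the paper only in that the paper gives no argument of its own here: both parts are quoted from \cite[Lem.~4.15]{SSS} and \cite[Lem.~4.25]{SSS}. Your sketch unpacks the construction behind those lemmas. The gain is that it shows why conditions (ii) and (iii) of \Cref{def:cofseq} are exactly what is needed. Together they make the $G$-spheres a sub-diagram. They also force every element of $t^{-1}(H(v))$ to have source in $H(u)$, so collapsing the $G$-spheres produces box maps for $H$. The citation, by contrast, covers the full homotopy-coherent construction of \cite{SSS}, which your sketch only describes.

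The step you flag as the main obstacle is not a real one, and you need no adapted choices. Take $u\ge v\ge w$; longer chains follow by induction. Conditions (iii) and (ii), applied twice, show that the elements $(b,a)$ of $F(\phi_{v,w})\times_{F(v)}F(\phi_{u,v})$ with $s(a)\in G(u)$ are exactly the elements of the $G$-composite. The composition bijections of $F$ preserve sources, so they restrict to those of $G$. Hence every box map in a coherence family automatically sends $\bigvee_{G(u)}S^{k}$ into $\bigvee_{G(w)}S^{k}$, because it refines a composite correspondence of $F$.

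Dually, let $(b,a)$ be an element of the $F$-composite with $t(b)\in H(w)$. Then $t(a)\in H(v)$: otherwise $b\in s^{-1}(t(a))\subset G(\phi_{v,w})$, which would give $t(b)\in G(w)$. It follows that $s(a)\in H(u)$ as well. So the collapsed families are coherence data for $H$. Restriction and collapse therefore work for arbitrary realizing data for $F$.

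Part (ii) is fine as you have it. It also follows from (i), since $F_{1}$ and $F_{2}$ are each sub-functors of $F_{1}\sqcup F_{2}$ with quotient the other.
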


Sch\"utz \cite{Schutz2022} constructs two second Steenrod squares $\Sq^{2}_{\varepsilon}$,
$\varepsilon\in\Z/2$, on $\Kho(L;\F_2)$ combinatorially; he does not show that they are
induced by a spectrum. That identification is due to Rajapakse \cite[Thm.~1.6]{Rajapakse}:
writing $\mathcal{X}_{l}(L)$, $l\ge0$, for the spectra obtained by the construction above when
a sign $-1$ is realized by the reflection in the first $l$ coordinates rather than in the
first one \cite[Rem.~4.26 and Rem.~5.6]{SSS}, so that $\mathcal{X}_{1}(L)=\Xo(L)$,
\[
\Sq^{2}\big|_{\mathcal{X}_{1}(L)}=\Sq^{2}_{1},\qquad
\Sq^{2}\big|_{\mathcal{X}_{3}(L)}=\Sq^{2}_{0},
\]
as operations on odd Khovanov homology with $\F_2$ coefficients. It is this theorem that
lets us read Sch\"utz's computation \eqref{eq:schutz-example} as a statement about a
homotopy type, and every statement in this paper asserting $\nu\ge1$ rests on it.
\begin{convention}
All spectra are finite CW spectra; $\wed$ denotes wedge, $\Sph^{n}$ the sphere spectrum in degree $n$, $\Sm^{n}$ the $n$-sphere, and $\CP^{2}$ the complex projective plane, so that $\Sigma^{d-2}\CP^{2}$ has cells in dimensions $d$ and $d+2$ and $\widetilde H^{d+2}=\widetilde H^{d}=\Z$. In the Introduction and from \Cref{cor:classification} on, $\Sm^{n}$ and $\CP^{2}$ also denote the corresponding suspension spectra; see the paragraph after \Cref{cor:classification}. 
\end{convention}

\subsection{Chang complexes and $\CP^{2}$}\label{ss:chang}

The two ways $\Sq^{2}$ can be non-zero in this setting, shown in \Cref{fig:chang}, are worth
contrasting explicitly, since the distinction drives the whole paper.

\begin{figure}[ht]
\centering
\begin{tikzpicture}[x=1cm,y=1.05cm]
\foreach \y/\lab in {0/d,1/d{+}1,2/d{+}2} {
  \draw[black!20] (-0.5,\y) -- (5.8,\y);
  \node[anchor=east,font=\small] at (-0.6,\y) {$\lab$};
}
\draw[thick] (0.6,0) to[bend right=42] node[right] {$\eta$} (0.6,2);
\fill (0.6,0) circle (2.4pt);
\fill (0.6,2) circle (2.4pt);
\node at (0.75,-0.75) {$\Sigma^{d-2}\CP^{2}$};
\begin{scope}[xshift=3.1cm]
\draw[thick] (0,0) -- node[left] {$2$} (0,1);
\draw[thick] (2,1) -- node[right] {$2$} (2,2);
\draw[thick] (0,0) to[bend left=14] node[above left,pos=0.5] {$\eta$} (2,2);
\fill (0,0) circle (2.4pt);
\fill (0,1) circle (2.4pt);
\fill (2,1) circle (2.4pt);
\fill (2,2) circle (2.4pt);
\node at (1.0,-0.75) {$\Sigma^{d-2}(\RP^{2}\wedge\RP^{2})$};
\end{scope}
\end{tikzpicture}
\caption{Cell diagrams of $\Sigma^{d-2}\CP^{2}$ and of $\Sigma^{d-2}(\RP^{2}\wedge\RP^{2})$.
A dot is a cell, drawn at the height of its dimension; a straight edge marked $2$ is a
component of degree $2$ of an attaching map, and a curved edge marked $\eta$ is a component
equal to $\eta$. On the left, $\Sigma^{d-2}\CP^{2}=\Cone(\eta\colon\Sm^{d+1}\to\Sm^{d})$. On
the right, in a suitable basis of cells, one $(d+1)$-cell is attached to the $d$-cell by
$2$, the other is attached trivially, and the $(d+2)$-cell is attached by $2$ to the second
$(d+1)$-cell and by $\eta$ to the $d$-cell.}
\label{fig:chang}
\end{figure}

\begin{itemize}[leftmargin=*]
\item If the homology is \emph{free}, then $\Sq^{1}=0$, but $\Sq^{2}$ can still be non-zero: it
detects a $(d+2)$-cell attached to a $d$-cell by $\eta$, as in
$\Cone(\eta\colon \Sm^{d+1}\to \Sm^{d})=\Sigma^{d-2}\CP^{2}$, whose cohomology is $\Z$ in
degrees $d$ and $d+2$.
\item If the homology has $2$-torsion, a non-zero $\Sq^{2}$ does not force a summand
$\Sigma^{k}\CP^{2}$. For instance, the Chang complex \cite{Chang} $\RP^{2}\wedge\RP^{2}$ has
reduced integral cohomology $\Z/2$ in degrees $3$ and $4$, so it has no such summand,
whereas $\Sq^{2}(x\otimes x)=\Sq^{1}x\otimes\Sq^{1}x\neq0$ for the generator
$x\in H^{1}(\RP^{2};\F_2)$, by the Cartan formula.
\end{itemize}

Since $\Kho(L_{m,n};\Z)$ is free (\Cref{cor:kunneth-torus}), only the first possibility
occurs here.

\section{The window structure of a split union}\label{sec:window}
The odd Khovanov homology of $L_{m,n}$ is free and, in each quantum grading, supported
in three consecutive homological degrees. We compute the ranks and the spectra of the
individual factors.
\begin{lemma}\label{lem:freeduality}
Let $X$ be a finite spectrum. Then $\widetilde H^{\,*}(X;\Z)$ is free if and only if
$\widetilde H_{*}(X;\Z)$ is free, and in that case
$\widetilde H_{i}(X;\Z)\cong\widetilde H^{\,i}(X;\Z)$ for every $i$.
\end{lemma}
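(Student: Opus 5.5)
The plan is to derive both assertions from the universal coefficient theorem for finite spectra, comparing the two groups degree by degree. A finite CW spectrum $X$ has a cellular chain complex $C_*(X)$ that is a bounded complex of finitely generated free abelian groups. Its reduced cellular cochain complex is $C^*(X)=\Hom(C_*(X),\Z)$. So all homology and cohomology groups are finitely generated and vanish outside a finite range of degrees. For every $i$ the universal coefficient theorem gives a natural short exact sequence
\[
0\longrightarrow\Ext^{1}_{\Z}\bigl(\widetilde H_{i-1}(X;\Z),\Z\bigr)\longrightarrow\widetilde H^{\,i}(X;\Z)\longrightarrow\Hom\bigl(\widetilde H_{i}(X;\Z),\Z\bigr)\longrightarrow0.
\]
This sequence splits, non-naturally.

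Next I decompose each $\widetilde H_i=\widetilde H_i(X;\Z)$ as $\Z^{b_i}\oplus T_i$, with $T_i$ finite. Then $\Hom(\widetilde H_i,\Z)\cong\Z^{b_i}$ and $\Ext^1(\widetilde H_{i-1},\Z)\cong T_{i-1}$. This gives $\widetilde H^{\,i}\cong\Z^{b_i}\oplus T_{i-1}$. The free rank of $\widetilde H^{\,i}$ is $b_i$, and its torsion subgroup is $T_{i-1}$.

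Suppose $\widetilde H_*$ is free, so that every $T_i$ vanishes. Then $\widetilde H^{\,i}\cong\Z^{b_i}\cong\widetilde H_i$, which is free.

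Now suppose $\widetilde H^{\,*}$ is free. Then $T_{i-1}=0$ for every $i$, so every $T_j$ vanishes and $\widetilde H_*$ is free. The same formula then gives $\widetilde H_i\cong\widetilde H^{\,i}$.

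A cleaner, symmetric alternative is to apply the dual universal coefficient sequence to the cochain complex $C^*$, which is again bounded and free with $\Hom(C^*,\Z)=C_*$. That expresses $\widetilde H_i$ through $\Hom(\widetilde H^{\,i},\Z)$ and $\Ext^1(\widetilde H^{\,i+1},\Z)$. Either route works, since we only need the equivalence of freeness and the isomorphism.

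The one point needing care is justifying the universal coefficient theorem for spectra rather than spaces. I would reduce to spaces by writing $X=\Sigma^{-k}\Sigma^\infty Y$ for a finite CW complex $Y$ and $k$ large, as in the realization construction of \Cref{ss:oddspectrum}. Then $\widetilde H_i(X)=\widetilde H_{i+k}(Y)$ and $\widetilde H^{\,i}(X)=\widetilde H^{\,i+k}(Y)$. Alternatively, I would note that the universal coefficient theorem is purely algebraic for the bounded free complex $C_*(X)$, so no real obstacle arises. Finite generation is essential: without it, $\Ext^1(\,\cdot\,,\Z)$ of a torsion-free group need not vanish. This is where the finiteness hypothesis on $X$ enters.
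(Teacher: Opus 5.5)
Your proof is correct and follows essentially the same route as the paper: both apply the universal coefficient theorem degree by degree, use that $\Ext(A,\Z)$ is the torsion subgroup of a finitely generated group $A$, and conclude that free cohomology forces all torsion in homology to vanish. Your explicit formula $\widetilde H^{\,i}\cong\Z^{b_i}\oplus T_{i-1}$ is a slightly more concrete form of the paper's remark that a finite direct summand of a free abelian group is trivial.
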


\begin{proof}
$\widetilde H_{*}(X;\Z)$ is finitely generated in each degree and vanishes in all but
finitely many, so the universal coefficient theorem gives
\[
\widetilde H^{\,i}(X;\Z)\ \cong\ \Hom\bigl(\widetilde H_{i}(X),\Z\bigr)\ \oplus\
\Ext\bigl(\widetilde H_{i-1}(X),\Z\bigr),
\]
and for finitely generated $A$ the group $\Ext(A,\Z)$ is the torsion subgroup of $A$. If
every $\widetilde H_{i}(X;\Z)$ is free, the $\Ext$ terms vanish and
$\widetilde H^{\,i}\cong\Hom(\widetilde H_{i},\Z)\cong\widetilde H_{i}$. Conversely, if
every $\widetilde H^{\,i}(X;\Z)$ is free, then each finite group
$\Ext\bigl(\widetilde H_{i-1}(X),\Z\bigr)$ is a direct summand of a free abelian group and
hence trivial, so every $\widetilde H_{i}(X;\Z)$ is torsion free, therefore free.
\end{proof}

\begin{convention}\label{conv:variance}
Every spectrum occurring below has free integral (co)homology, so by
\Cref{lem:freeduality} $\widetilde H_{i}(X;\Z)\cong\widetilde H^{\,i}(X;\Z)$ for all $i$:
the two vanish in the same degrees and have the same ranks. We therefore speak of
\emph{the degrees in which $X$ is concentrated} and of \emph{the rank of $X$ in degree
$i$} without specifying the variance. Statements about $\Kho$ and about
$\Sq^{2}_{\varepsilon}$ are cohomological, while cell structures, connectivity and
Hurewicz arguments are homological; \Cref{lem:freeduality} is what licenses passing
between the two, and we do so without further comment.
\end{convention}

Now we record the spectra of the factors.

\begin{lemma}\label{lem:factorspectra}
Let $k\ge1$ and $q\equiv k\pmod 2$, and put $t:=\tfrac{q-k}{2}$, $a:=[\,t\in S_k\,]$,
$b:=[\,t+1\in S_k\,]$, where $[\,\cdot\,]$ is $1$ if the condition holds and $0$
otherwise. Then
\[
\Xo^{q}\bigl(T(2,k)\bigr)\ \simeq\ (\Sph^{t})^{\wed a}\ \wed\ (\Sph^{t+1})^{\wed b}.
\]
Equivalently, for every $Q$ the spectrum $\Xo^{Q}\bigl(T(2,k)\bigr)$ is a wedge of spheres
carrying one sphere $\Sph^{\,i}$ for each $i\in S_k$ with $Q\in\{k+2i-2,\,k+2i\}$, and no
other cells.
\end{lemma}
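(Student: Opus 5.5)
The plan is to read off the cohomology of $\Xo^{q}(T(2,k))$ from \Cref{lem:torusknot}, and then use a general fact: a finite spectrum with free cohomology supported in at most two consecutive degrees, of rank at most one in each, is a wedge of spheres.

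\textbf{Step 1: the cohomology.} Fix $q\equiv k\pmod 2$ and set $t=(q-k)/2$. By \Cref{lem:torusknot}, $\Kho^{i,q}(T(2,k);\Z)\cong\Z$ exactly when $i\in S_k$ and $q\in\{k+2i-2,\,k+2i\}$. These two conditions say $i=t+1$ or $i=t$. Hence $\widetilde H^{*}(\Xo^{q};\Z)$ is free, has rank $a$ in degree $t$ and rank $b$ in degree $t+1$, and vanishes elsewhere. For $q\not\equiv k\pmod 2$ the cohomology vanishes identically. A finite spectrum with zero integral homology is contractible by the Hurewicz and Whitehead theorems, so in that case the spectrum is the empty wedge. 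This also gives the ``equivalently'' reformulation.

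\textbf{Step 2: the homotopy type.} By \Cref{lem:freeduality}, the integral homology is also free, concentrated in degrees $t$ and $t+1$ with ranks $a$ and $b$. A finite spectrum $X$ with free homology concentrated in degrees $[t,t+1]$ is $(t-1)$-connected. Take a minimal cell structure: free homology lets us realize a cell structure with one cell per basis element, in dimensions $t$ and $t+1$ only. The attaching map of the top cells is then a map $\bigvee S^{t}\to\bigvee S^{t}$ (stably, in degree $0$ of the shifted sphere spectrum). It is determined by an integer matrix, namely the cellular differential. Freeness together with the rank count forces this matrix to be zero, so $X$ is a wedge of spheres.

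An alternative for Step 2 avoids cell structures. By stable Hurewicz, $\pi_t X\cong H_t X$, so there is a map $(\Sph^{t})^{\wed a}\to X$ inducing an isomorphism on $H_t$. Its cofiber has homology only in degree $t+1$, free of rank $b$, so it is $(\Sph^{t+1})^{\wed b}$ by Hurewicz and Whitehead. The resulting cofiber sequence splits because the connecting map $(\Sph^{t+1})^{\wed b}\to(\Sph^{t+1})^{\wed a}$ is zero on $H_{t+1}$, and maps between wedges of spheres of the same dimension are detected by homology. This gives $X\simeq(\Sph^{t})^{\wed a}\wed(\Sph^{t+1})^{\wed b}$.

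\textbf{Main obstacle.} The only point needing care is ruling out a nontrivial attaching map. The key observation is that cells differing in dimension by one interact only through the degree-$0$ stable stem $\pi_0\Sph=\Z$, which homology sees. By contrast, the $\eta$ component in $\pi_1$ requires a dimension gap of two, which cannot occur in a two-degree window. Everything else is bookkeeping of $t$ against $S_k$.
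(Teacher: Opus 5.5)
Your proposal is correct, and your main Step 2 is essentially the paper's argument. The paper suspends $\Xo^{q}(T(2,k))$ to a simply connected space and takes a minimal cell structure, with one cell per basis element, via \cite[Prop.~4C.1]{Hatcher}; it then uses the Hurewicz isomorphism to identify the attaching maps of the top cells with the cellular differential, which vanishes by the rank count. Your alternative cofiber-sequence argument (split off $(\Sph^{t})^{\wed a}$ via stable Hurewicz and note that the connecting map is zero on $H_{t+1}$) is also valid. Your explicit treatment of the wrong-parity gradings covers the ``for every $Q$'' clause, which the paper leaves implicit.
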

\begin{proof}
By \Cref{lem:torusknot} and \Cref{conv:variance}, $X:=\Xo^{q}(T(2,k))$ has free homology of
ranks $a$ and $b$ in degrees $t$ and $t+1$, and none in other degrees. For $N$ large there is
a simply connected finite CW complex $Y$ with $\Sigma^{\infty}Y\simeq\Sigma^{N}X$; put
$u:=t+N$ and take $N$ so large that $u\ge2$. By \cite[Prop.~4C.1]{Hatcher}, $Y$ is homotopy
equivalent to a complex with one $0$-cell, $a$ cells of dimension $u$ and $b$ cells of
dimension $u+1$. Its cellular chain groups have the ranks of its homology, so the cellular
differential vanishes. The $u$-skeleton is $(\Sm^{u})^{\wed a}$, and the cellular boundary of
a $(u+1)$-cell is the image of its attaching map under the Hurewicz isomorphism
$\pi_{u}\bigl((\Sm^{u})^{\wed a}\bigr)\cong\widetilde H_{u}\bigl((\Sm^{u})^{\wed a};\Z\bigr)$.
So every attaching map is null, $Y\simeq(\Sm^{u})^{\wed a}\wed(\Sm^{u+1})^{\wed b}$, and
desuspending $N$ times gives the claim.
\end{proof}
For the split union, \Cref{lem:torusknot} and \Cref{cor:kunneth-torus} determine the odd
Khovanov homology of $L_{m,n}$ in each quantum grading.

\begin{lemma}\label{lem:window}
Let $m,n\ge1$ and $q\equiv m+n\pmod 2$. Then $\Kho^{\,i,q}(L_{m,n};\Z)$ is
free, zero unless $i\in\{d,d+1,d+2\}$, and
\[
\rk\Kho^{\,d+\varepsilon,\,q}\bigl(L_{m,n};\Z\bigr)\ =\ \binom{2}{\varepsilon}\,P(d+\varepsilon),
\qquad \varepsilon=0,1,2 .
\]
Equivalently, $\Xo^{q}(L_{m,n})$ is concentrated in degrees $d,d+1,d+2$ with these ranks,
in the sense of \Cref{conv:variance}.
\end{lemma}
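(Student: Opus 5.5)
The plan is to combine \Cref{cor:kunneth-torus} with the explicit description of the factors in \Cref{lem:torusknot} and then count bidegrees. By \Cref{cor:kunneth-torus}, $\Kho^{i,q}(L_{m,n};\Z)$ is the direct sum, over $i_1+i_2=i$ and $q_1+q_2=q$, of tensor products of free groups of rank at most one. It is therefore free, and its rank is the number of pairs of generators whose bidegrees add to $(i,q)$.

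\Cref{lem:torusknot} gives a generator of $\Kho(T(2,m))$ in bidegree $(i_1,\,m+2i_1-2+2e_1)$ for each $i_1\in S_m$ and $e_1\in\{0,1\}$, and similarly $(i_2,\,n+2i_2-2+2e_2)$ with $i_2\in S_n$, $e_2\in\{0,1\}$ for $T(2,n)$. A product of two such generators has bidegree
\[
\bigl(i_1+i_2,\ m+n+2(i_1+i_2)-4+2(e_1+e_2)\bigr).
\]
Setting this equal to $(i,q)$ and writing $q=m+n+2d$, we get $i=d+2-(e_1+e_2)$. Put $\varepsilon:=2-(e_1+e_2)\in\{0,1,2\}$. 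Then $i=d+\varepsilon$, so the group vanishes outside $\{d,d+1,d+2\}$. This also recovers the parity condition of \Cref{rem:parity}: every generator has $q\equiv m+n\pmod 2$.

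For fixed $\varepsilon$, the number of choices of $(e_1,e_2)\in\{0,1\}^2$ with $e_1+e_2=2-\varepsilon$ is $\binom{2}{2-\varepsilon}=\binom{2}{\varepsilon}$. Once $(e_1,e_2)$ is fixed, the pairs $(i_1,i_2)\in S_m\times S_n$ with $i_1+i_2=d+\varepsilon$ number exactly $P(d+\varepsilon)$, and each pair contributes one copy of $\Z\otimes\Z=\Z$. Multiplying gives the stated rank $\binom{2}{\varepsilon}P(d+\varepsilon)$.

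The statement about $\Xo^q(L_{m,n})$ then follows from $\widetilde H^{i}(\Xo^q)\cong\Kho^{i,q}$ together with \Cref{lem:freeduality} and \Cref{conv:variance}, since the cohomology is free. There is no real obstacle here. The only point needing care is the bookkeeping of the quantum shift: each factor contributes $-2$ at $e=0$, so the two shifts sum to $-4$, and that sum is what pins the window at $d=(q-m-n)/2$.
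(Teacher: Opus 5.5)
Your proposal is correct and is essentially the paper's proof. The paper also takes the K\"unneth basis built from the lower and upper generators of \Cref{lem:torusknot} via \Cref{cor:kunneth-torus}. It lets $\varepsilon$ count the factors on which the lower generator is chosen (your $2-(e_1+e_2)$), reads off $i=d+\varepsilon$ and the rank $\binom{2}{\varepsilon}P(d+\varepsilon)$, and deduces the spectrum statement from $\widetilde H^{i}(\Xo^{q})\cong\Kho^{i,q}$ and \Cref{conv:variance}.
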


\begin{proof}
By \Cref{lem:torusknot}, $\Kho(T(2,k);\Z)$ has, for each $i\in S_k$, exactly two basis
elements: a \emph{lower} one in quantum grading $k+2i-2$ and an \emph{upper} one in
$k+2i$. By \Cref{cor:kunneth-torus} a basis of $\Kho(L_{m,n};\Z)$ is given by tensor
products of such generators, with bidegrees adding; a basis element is thus a pair
$(i_{1},i_{2})\in S_{m}\times S_{n}$ together with a choice of lower or upper generator
on each factor, and it lies in homological degree $i=i_{1}+i_{2}$.

Let $\varepsilon\in\{0,1,2\}$ be the number of factors on which the lower generator was
chosen. Each such choice lowers the quantum grading by $2$ and leaves $i$ unchanged, so
\[
q\ =\ (m+2i_{1})+(n+2i_{2})-2\varepsilon\ =\ (m+n)+2(i_{1}+i_{2})-2\varepsilon ,
\]
which compared with $q=(m+n)+2d$ gives $i=i_{1}+i_{2}=d+\varepsilon$. Hence in quantum
grading $q$ only the degrees $d,d+1,d+2$ occur, and the basis elements in degree
$d+\varepsilon$ are counted by the $\binom{2}{\varepsilon}$ choices of which factors are
lower, together with the $P(d+\varepsilon)$ pairs $(i_{1},i_{2})$ summing to
$d+\varepsilon$. This is the stated rank.

Freeness is \Cref{cor:kunneth-torus}, and the last sentence of the statement is
$\widetilde H^{\,i}\bigl(\Xo^{q}(L);\Z\bigr)\cong\Kho^{\,i,q}(L;\Z)$ together with
\Cref{conv:variance}.
\end{proof}

\begin{remark}\label{rem:diagonals}
A single factor has homology in at most two consecutive degrees, so its
spectrum is a wedge of spheres. For the split union, the outer degrees $d$
and $d+2$ can support the two cells of $\Sigma^{d-2}\CP^2$; the middle degree
contributes only spheres in the classification below.
\end{remark}

\begin{figure}[ht]
\centering
\begin{tikzpicture}[x=0.78cm,y=0.5cm]
\fill[black!18] (4,6) rectangle (7,7);
\fill[black!18] (11.4,6) rectangle (12.4,7);
\draw[black!30,step=1] (0,0) grid (11,13);
\draw (0,0) rectangle (11,13);
\foreach \r in {1,...,12} \draw[black!30] (11.4,\r) -- (12.4,\r);
\draw (11.4,0) rectangle (12.4,13);
\foreach \r in {0,...,12} {
  \pgfmathtruncatemacro{\dd}{\r-2}
  \node[font=\small] at (11.9,{\r+0.5}) {$\dd$};
}
\node[font=\small] at (11.9,13.55) {$d$};
\foreach \i/\p in {0/1,2/2,3/2,4/3,5/3,6/4,7/3,8/3,9/2,10/1} {
  \pgfmathtruncatemacro{\pp}{2*\p}
  \node[font=\small] at ({\i+0.5},{\i+0.5}) {$\p$};
  \node[font=\small] at ({\i+0.5},{\i+1.5}) {$\pp$};
  \node[font=\small] at ({\i+0.5},{\i+2.5}) {$\p$};
}
\foreach \i in {0,...,10}
  \node[font=\scriptsize] at ({\i+0.5},-0.55) {$\i$};
\foreach \r in {0,...,12} {
  \pgfmathtruncatemacro{\q}{6+2*\r}
  \node[font=\scriptsize,anchor=east] at (-0.1,{\r+0.5}) {$\q$};
}
\node[font=\small] at (5.5,-1.35) {homological grading $i$};
\node[font=\small,rotate=90] at (-1.25,6.5) {quantum grading $q$};
\end{tikzpicture}
\caption{The ranks of $\Kho^{\,i,q}(L_{4,6};\Z)$ (\Cref{lem:window}); empty boxes are $0$,
and the right-hand column gives $d$ for each row. In the shaded row $q=18$ we have $d=4$,
and the ranks $P(4),2P(5),P(6)$ are $3,6,4$. Read by columns, column $i$ contains $P(i)$, $2P(i)$, $P(i)$, so the homology lies
on three $\delta$-diagonals with multiplicities $1,2,1$; column $1$ is empty because
$1\notin S_k$.}
\label{fig:window}
\end{figure}

By \Cref{lem:window}, the groups $\Kho^{\,i,q}(L_{m,n};\Z)$ are determined by the numbers
$P(t)$. We record their zeros, and their values once $m,n\ge t$.

\begin{lemma}\label{lem:Pvanishing}
$S_m+S_n\subseteq\{0\}\cup[2,m+n]$. Hence $P(t)=0$ for $t\le-1$, for $t=1$, and for $t\ge m+n+1$. Moreover, if $m,n\ge t$ then
\[
P(t)=\#\bigl\{(i_1,i_2)\in(\Z_{\ge0}\setminus\{1\})^{2}: i_1+i_2=t\bigr\}=\begin{cases}1,&t=0,\\0,&t=1,\\ 2,&t=2,\\ t-1,&t\ge3.\end{cases}
\]
\end{lemma}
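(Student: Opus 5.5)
The inclusion $S_m+S_n\subseteq\{0\}\cup[2,m+n]$ follows directly from the definition $S_k=\{0\}\cup\{2,\ldots,k\}$. I would take $i_1\in S_m$ and $i_2\in S_n$. Both are nonnegative, so the sum is at least $0$, and it is at most $m+n$. If the sum equals $1$, then one summand is $0$ and the other is $1$; this is impossible, since $1\notin S_k$. Hence $i_1+i_2\in\{0\}\cup[2,m+n]$.

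The vanishing statements follow at once. $P(t)$ counts pairs with sum $t$, so it is zero whenever $t$ lies outside the set above, that is, for $t\le-1$, for $t=1$, and for $t\ge m+n+1$.

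For the formula, assume $m,n\ge t$ and let $i_1+i_2=t$ with $i_1,i_2\ge0$. Then $i_1\le t\le m$ and $i_2\le t\le n$. So the condition $i_j\in S_{m}$ (resp. $S_n$) reduces to $i_j\ne1$, which gives the displayed description as pairs in $(\Z_{\ge0}\setminus\{1\})^2$.

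I would then count these pairs. There are $t+1$ pairs $(i_1,t-i_1)$ with $0\le i_1\le t$. From these I remove the pairs with $i_1=1$ or $i_2=1$, using inclusion–exclusion.
\begin{itemize}
\item For $t=0$, only $(0,0)$ occurs, so $P(0)=1$.
\item For $t=1$, the count is $0$, as already shown.
\item For $t=2$, the three pairs are $(0,2)$, $(1,1)$ and $(2,0)$. Removing $(1,1)$ leaves $2$.
\item For $t\ge3$, the excluded pairs $(1,t-1)$ and $(t-1,1)$ are distinct and lie among the $t+1$ pairs. This gives $t+1-2=t-1$.
\end{itemize}

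No step is a real obstacle. The only care needed is the coincidence $(1,1)$ at $t=2$, which is why that case is listed separately.
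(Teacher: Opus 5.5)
Your proposal is correct and follows essentially the same route as the paper. You prove the inclusion slightly differently: you note that a sum of two nonnegative entries equal to $1$ would force $1\in S_k$, while the paper observes that a nonzero pair must have a coordinate $\ge2$. The rest matches the paper exactly: the constraints $i_1\le m$, $i_2\le n$ are automatic, and you count $t+1$ pairs minus the pairs with a coordinate equal to $1$, which coincide only at $t=2$.
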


\begin{proof}
Let $(i_{1},i_{2})\in S_{m}\times S_{n}$. If $i_{1}=i_{2}=0$ then $i_{1}+i_{2}=0$;
otherwise at least one coordinate is $\ge2$, so $i_{1}+i_{2}\ge2$. Also $i_{1}\le m$ and
$i_{2}\le n$, so $i_{1}+i_{2}\le m+n$. This proves
$S_{m}+S_{n}\subseteq\{0\}\cup[2,m+n]$, and the three vanishing statements for $P$ follow
at once.

For the second statement, write $S_{k}=\{i\in\Z_{\ge0}\ :\ i\ne1\ \text{and}\ i\le k\}$.
If $i_{1}+i_{2}=t$ with $i_{1},i_{2}\ge0$ then $i_{1}\le t$ and $i_{2}\le t$, so as soon
as $m,n\ge t$ the constraints $i_{1}\le m$ and $i_{2}\le n$ are automatically satisfied.
The pairs counted by $P(t)$ are then exactly the pairs
$(i_1,i_2)\in(\Z_{\ge0}\setminus\{1\})^{2}$ with $i_1+i_2=t$, which is the first equality.

Finally we count these pairs for $t\ge0$. There are $t+1$ pairs of non-negative
integers summing to $t$, and from these we must discard the ones having a coordinate
equal to $1$. There is one pair with $i_{1}=1$, namely $(1,t-1)$, and one with $i_{2}=1$,
namely $(t-1,1)$, both present exactly when $t\ge1$; they coincide precisely when $t=2$.
Hence: for $t=0$ nothing is discarded and $P(0)=1$; for $t=1$ the two discarded
pairs $(1,0)$ and $(0,1)$ exhaust all $t+1=2$ pairs, so $P(1)=0$; for $t=2$ the
single pair $(1,1)$ is discarded from three, so $P(2)=2$; and for $t\ge3$ two
distinct pairs are discarded from $t+1$, so $P(t)=t-1$.
\end{proof}

\section{Spectra with free homology in three degrees}\label{sec:classification}

A finite spectrum with free integral homology in three consecutive degrees is
determined by those ranks and the rank of $\Sq^2$. We give the elementary
attaching-map argument, including the cofiber calculation needed in the skein
step. Throughout the proof for spaces, the assumption $d\ge3$ excludes
Whitehead products in the relevant degree.

\begin{lemma}\label{lem:cp2pi}
$\pi_{d+1}\bigl(\Sigma^{d-2}\CP^{2}\bigr)=0$.
\end{lemma}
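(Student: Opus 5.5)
The plan is to compute $\pi_{d+1}$ from the cofiber sequence defining $\Sigma^{d-2}\CP^{2}$. By definition,
\[
\Sm^{d+1}\xrightarrow{\ \eta\ }\Sm^{d}\longrightarrow\Sigma^{d-2}\CP^{2}\longrightarrow\Sm^{d+2}\xrightarrow{\ \Sigma\eta\ }\Sm^{d+1}.
\]
I will work stably, as suspension spectra or equivalently with spaces for $d$ large; the convention after \Cref{cor:classification} identifies $\Sm^{n}$ with its suspension spectrum. For spaces with $d\ge3$, everything below lies in the stable range by Freudenthal.

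Applying $\pi_{d+1}$ to the cofiber sequence gives a long exact sequence, which for spectra is just the long exact sequence of a cofiber sequence:
\[
\pi_{d+1}(\Sm^{d+1})\xrightarrow{\eta_*}\pi_{d+1}(\Sm^{d})\to\pi_{d+1}(\Sigma^{d-2}\CP^{2})\to\pi_{d+1}(\Sm^{d+2})\to\cdots.
\]
The right-hand term is $\pi_{-1}^{s}=0$. The left map sends the generator $\iota\in\pi_0^s=\Z$ to $\eta\in\pi_1^s=\Z/2$. Since $\eta$ generates $\pi_1^s$, this map is surjective, so its cokernel is zero. Exactness then gives $\pi_{d+1}(\Sigma^{d-2}\CP^{2})=0$.

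For the space version, the same argument needs a little more care. One route is the Blakers–Massey/Freudenthal statement that $(\Sm^{d}\cup_\eta e^{d+2},\Sm^{d})$ is $(d+1)$-connected and that excision holds through degree $2d$. For $d\ge3$ this yields the exact sequence $\pi_{d+1}(\Sm^{d})\to\pi_{d+1}(X)\to\pi_{d+1}(X,\Sm^{d})=0$, with the first map killing exactly the image of $\partial$ of the top cell. That image is the class $\eta$, which generates $\pi_{d+1}(\Sm^{d})\cong\Z/2$.

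The only substantive input is that $\eta$ generates $\pi_{d+1}(\Sm^{d})\cong\Z/2$ for $d\ge3$. This is the main point to cite (Freudenthal together with the Hopf fibration), and I expect nothing beyond that.
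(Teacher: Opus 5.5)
Your argument is correct, but it takes a different route from the paper.

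\textbf{The paper's route.} The paper never uses the cell structure of $\CP^{2}$ or the value of $\pi^{s}_{1}$. It reads the group as $\pi^{s}_{3}(\CP^{2})$ and computes the unstable group $\pi_{3}(\CP^{2})=0$ from the circle bundle $S^{1}\to S^{5}\to\CP^{2}$, using only that $S^{5}$ is $4$-connected and $S^{1}$ is aspherical. It then stabilizes by Freudenthal: $\pi_{3}(\CP^{2})\to\pi_{4}(\Sigma\CP^{2})$ is surjective at the edge of the stable range, and all later suspensions are isomorphisms.

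\textbf{Your route.} You stay in the stable range and use the cofiber sequence $\Sm^{d+1}\xrightarrow{\eta}\Sm^{d}\to\Sigma^{d-2}\CP^{2}$. Since $\pi^{s}_{-1}=0$, the group is the cokernel of $\eta_{*}\colon\pi^{s}_{0}\to\pi^{s}_{1}$. This cokernel vanishes because $\eta$ generates $\pi^{s}_{1}\cong\Z/2$.

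\textbf{Comparison.} Your route needs two classical facts that the paper's proof of this lemma avoids: that the $4$-cell of $\CP^{2}$ is attached by the Hopf map, and that $\eta$ generates $\pi^{s}_{1}$. Both are already assumed elsewhere in the paper (\Cref{ss:chang}, \Cref{lem:etacofiber}), so nothing new is imported. In return your proof is shorter and shows the mechanism directly: the top cell kills exactly the class $\eta$ carried by the bottom cell in degree $d+1$. This is the same phenomenon that drives \Cref{lem:etacofiber} and \Cref{prop:classification}. The paper's argument is tied to the specific geometry of $\CP^{2}$, but it is self-contained given only Freudenthal and the homotopy sequence of a fibration.

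Your Blakers--Massey paragraph is optional. For $d\ge3$ the space-level group already agrees with the stable one by Freudenthal.
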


\begin{proof}
Since $\Sigma^{d-2}\CP^{2}$ is the suspension spectrum of $\CP^{2}$, shifted so that its
cells lie in degrees $d$ and $d+2$, the group in question is the stable homotopy group
$\pi^{s}_{3}(\CP^{2})$.

First, $\pi_{3}(\CP^{2})=0$. Write
\[
S^{5}=\bigl\{(z_{0},z_{1},z_{2})\in\mathbb{C}^{3}\ :\ |z_{0}|^{2}+|z_{1}|^{2}+|z_{2}|^{2}=1\bigr\},
\qquad
S^{1}=\{\lambda\in\mathbb{C}\ :\ |\lambda|=1\},
\]
and let $S^{1}$ act on $S^{5}$ by
$\lambda\cdot(z_{0},z_{1},z_{2})=(\lambda z_{0},\lambda z_{1},\lambda z_{2})$. The action
is free, because the $z_{i}$ are not all zero, and its orbits are exactly the fibers of
\[
p\colon\ S^{5}\longrightarrow\CP^{2},\qquad
p(z_{0},z_{1},z_{2})=[z_{0}:z_{1}:z_{2}].
\]
So $p$ is a principal $S^{1}$-bundle, in particular a fibration
$S^{1}\to S^{5}\xrightarrow{\,p\,}\CP^{2}$, whose long exact sequence of homotopy groups
contains
\[
\pi_{3}(S^{5})\longrightarrow\pi_{3}(\CP^{2})\longrightarrow\pi_{2}(S^{1}).
\]
Both outer groups vanish, the first because $S^{5}$ is $4$-connected and the second
because $S^{1}$ is aspherical. Hence $\pi_{3}(\CP^{2})=0$.

Now we stabilize. If $X$ is $m$-connected then the suspension map
$\pi_{i}(X)\to\pi_{i+1}(\Sigma X)$ is an isomorphism for $i\le 2m$ and a surjection for
$i=2m+1$, by the Freudenthal suspension theorem \cite[\S4.2]{Hatcher}. The space $\CP^{2}$
is $1$-connected, so taking $m=1$ and $i=3$ the map
$\pi_{3}(\CP^{2})\to\pi_{4}(\Sigma\CP^{2})$ is surjective, and therefore
$\pi_{4}(\Sigma\CP^{2})=0$. For $k\ge1$ the space $\Sigma^{k}\CP^{2}$ is
$(k+1)$-connected, and $k+3\le 2(k+1)$, so
$\pi_{k+3}(\Sigma^{k}\CP^{2})\to\pi_{k+4}(\Sigma^{k+1}\CP^{2})$ is an isomorphism. The
groups $\pi_{k+3}(\Sigma^{k}\CP^{2})$ are therefore all isomorphic for $k\ge1$ and all
vanish, so $\pi^{s}_{3}(\CP^{2})=0$.
\end{proof}

We use the following standard fact about wedges. Let $A_{1},\dots,A_{r}$ be finite CW
complexes, each with a single $0$-cell and no cells in dimensions $1,\dots,n-1$. The product
$A_{1}\times\cdots\times A_{r}$ is obtained from $A_{1}\wed\cdots\wed A_{r}$ by attaching
cells of dimension at least $2n$, so the inclusion of the wedge into the product is
$(2n-1)$-connected. Composing with the projections of the product, the collapse maps
$\mathrm{pr}_{j}\colon A_{1}\wed\cdots\wed A_{r}\to A_{j}$ therefore induce isomorphisms
\begin{equation}\label{eq:wedgesum}
\pi_{i}(A_{1}\wed\cdots\wed A_{r})\ \xrightarrow{\ \cong\ }\ \bigoplus_{j=1}^{r}\pi_{i}(A_{j})
\qquad\text{for } i<2n-1 .
\end{equation}
By the Freudenthal suspension theorem these groups are stable for $i<2n-1$, so
\eqref{eq:wedgesum} also holds for the suspension spectra. We need it only for $i=n+1$, and
then the condition $i<2n-1$ says exactly $n\ge3$. In our applications
$n=d$ and $i=d+1$, which is why we assume $d\ge3$ in \Cref{lem:etacofiber} and \Cref{prop:classification}. For $n=2$ and $i=3$ it fails:
$\pi_{3}(\Sm^{2}\wed\Sm^{2})\cong\Z^{3}$, where the Whitehead product $[\iota_{1},\iota_{2}]$
generates a summand killed by both collapse maps.

\begin{lemma}\label{lem:etacofiber}
Let $d\ge3$ and $r\ge1$, and let $\eta\in\pi_{d+1}(\Sm^{d})=\pi^{s}_{1}\cong\Z/2$ be the
generator, the stable class of the Hopf map. Let $g\colon\Sm^{d+1}\to(\Sm^{d})^{\wed r}$ be a
map, and put $Y:=\Cone(g)$. For $1\le j\le r$ let
$\mathrm{pr}_{j}\colon(\Sm^{d})^{\wed r}\to\Sm^{d}$ collapse all summands but the $j$-th.
Each composite $\mathrm{pr}_{j}\circ g$ is $0$ or $\eta$, and by \eqref{eq:wedgesum} with $n=d$,
in degree $d+1$, $g\simeq0$ if and only if $\mathrm{pr}_{j}\circ g=0$ for every $j$. Let
$x_{1},\dots,x_{r}\in\widetilde H^{\,d}(Y;\F_{2})$ be the classes such that $x_{j}$
restricts to the generator on the $j$-th summand of $(\Sm^{d})^{\wed r}$ and to $0$ on the
others, and let $z\in\widetilde H^{\,d+2}(Y;\F_{2})$ be the generator. Then
\[
\Sq^{2}x_{j}\ =\
\begin{cases}
z, & \mathrm{pr}_{j}\circ g=\eta,\\
0, & \mathrm{pr}_{j}\circ g=0,
\end{cases}
\]
so $\Sq^{2}\colon\widetilde H^{\,d}(Y;\F_{2})\to\widetilde H^{\,d+2}(Y;\F_{2})$ is non-zero if
and only if $g\not\simeq0$. Consequently
\[
Y\ \simeq\
\begin{cases}
(\Sm^{d})^{\wed r}\wed\Sm^{d+2}, & g\simeq0,\\[2pt]
\Sigma^{d-2}\CP^{2}\wed(\Sm^{d})^{\wed r-1}, & g\not\simeq0 .
\end{cases}
\]
\end{lemma}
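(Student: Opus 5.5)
The plan is to compute everything from the cofiber sequence $\Sm^{d+1}\xrightarrow{g}(\Sm^{d})^{\wed r}\to Y\to\Sm^{d+2}$. Three steps are needed: identify the components of $g$, compute $\Sq^2$ on $Y$ by naturality, and then change basis on the bottom wedge so that $g$ lands in a single summand.

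First, the components. By \eqref{eq:wedgesum} with $n=d\ge3$ and $i=d+1$, the collapse maps give an isomorphism $\pi_{d+1}\bigl((\Sm^d)^{\wed r}\bigr)\cong\bigoplus_j\pi_{d+1}(\Sm^d)\cong(\Z/2)^r$. Each $\mathrm{pr}_j\circ g$ is therefore $0$ or $\eta$, and $g\simeq0$ exactly when all of them vanish.

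Second, the squares. For each $j$, the map $\mathrm{pr}_j$ extends over the cones to a map of cofibers $Y\to\Cone(\mathrm{pr}_j\circ g)=:Y_j$, which is the identity on the top cell $\Sm^{d+2}$. Here $Y_j$ is either $\Sm^d\wed\Sm^{d+2}$ or $\Sigma^{d-2}\CP^2$. This map pulls the bottom generator of $Y_j$ back to $x_j$, and since it is the identity on the top cell it pulls the top generator back to $z$. By naturality of $\Sq^2$, we get $\Sq^2x_j=z$ exactly when $\Sq^2$ is nonzero on $Y_j$. That holds for $\Sigma^{d-2}\CP^2$ (the square of the generator of $H^2(\CP^2;\F_2)$ is nonzero, and $\Sq^2$ commutes with suspension). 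It fails for a wedge, where $\Sq^2$ into the top degree vanishes because the sphere $\Sm^d$ has no cohomology in degree $d+2$. This proves the displayed formula and the criterion for $g\simeq0$.

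Third, the splitting. If $g\simeq0$, then $Y\simeq(\Sm^d)^{\wed r}\wed\Sm^{d+2}$ directly. Otherwise, relabel so that $\mathrm{pr}_1\circ g=\eta$. Using the group structure on maps out of a suspension, define a self-equivalence $\phi$ of $(\Sm^d)^{\wed r}$ as the identity on summand $1$ and as $\iota_j+\epsilon_j\iota_1$ on summand $j\ge2$, where $\epsilon_j\in\{0,1\}$ is chosen so that this sends $\eta$ on summand $j$ to the right combination. Concretely, choose $\phi$ so that on $\pi_d$ it is an invertible integral matrix and on $\pi_{d+1}$ it carries $g$ to $(\eta,0,\dots,0)$. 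Over $\Z/2$ this is row reduction of the vector $(\mathrm{pr}_jg)_j$ to $e_1$. Such a $\phi$ is a homotopy equivalence, since it induces an isomorphism on homology of a simply connected wedge (after suspension). Then $Y\simeq\Cone(\phi\circ g)$, and $\phi\circ g$ factors through the first summand as $\eta$. Hence $\Cone(\phi\circ g)\simeq\Cone(\eta)\wed(\Sm^d)^{\wed r-1}=\Sigma^{d-2}\CP^2\wed(\Sm^d)^{\wed r-1}$.

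The main obstacle is the basis change in the third step: one has to check that composing with a matrix-type self-map of the wedge acts on $\pi_{d+1}$ by the mod-$2$ reduction of the matrix. This uses linearity of $\eta$-composition, $\eta\circ$-precomposition being a homomorphism on stable groups, which is valid because $d\ge3$ puts us in the stable range.
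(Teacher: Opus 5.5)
Your first two steps are the paper's. The components are read off via \eqref{eq:wedgesum}. $\Sq^2x_j$ comes from naturality along the map of cofibers $Y\to Y_j=\Cone(\mathrm{pr}_j\circ g)$ that is $\mathrm{pr}_j$ on the base and the identity on the top cell.

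The splitting is where you take a genuinely different route. The paper desuspends $g$ to a map of spaces $\Sm^3\to(\Sm^2)^{\wed r}$ and forms the $4$-dimensional cone $M$. It then invokes the classification theorem of So and Theriault \cite[Thm.~5.9]{SoTheriault}, feeding in the homology of $M$ and the non-vanishing of $\Sq^2$ just established.

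You stay at $d\ge3$ and change basis on $(\Sm^d)^{\wed r}$ by a self-equivalence. The key fact is that a self-map with integer matrix $A$ acts on $\pi_{d+1}\cong(\Z/2)^r$ by $A \bmod 2$. This holds because post-composition is always additive, and precomposition with $\eta$ is additive since $\eta$ is a suspension in this range. Your route is more elementary and self-contained: it needs no external classification theorem, and it does not use the $\Sq^2$ computation for the splitting. The paper instead trades this explicit computation for a citation whose hypotheses it then verifies.

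One correction to the explicit $\phi$. As written (identity on summand $1$, $\iota_j+\epsilon_j\iota_1$ on summand $j\ge2$, with $\iota_j$ the inclusions), it is the transpose of what you need. It changes only the first component, $\mathrm{pr}_1\circ\phi\circ g=\eta+\sum_{j\ge2}\epsilon_j\,\mathrm{pr}_j\circ g$, and leaves $\mathrm{pr}_j\circ\phi\circ g=\mathrm{pr}_j\circ g$ for $j\ge2$. Take instead $\phi\circ\iota_1=\iota_1+\sum_{j\ge2}\epsilon_j\iota_j$ and $\phi\circ\iota_j=\iota_j$ for $j\ge2$, where $\mathrm{pr}_j\circ g=\epsilon_j\eta$. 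Then $\phi\circ g=\iota_1\eta+2\sum_{j\ge2}\epsilon_j\iota_j\eta=\iota_1\eta$. This matches your own ``row reduction'' description, and with this fix the argument goes through.
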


\begin{proof}
Fix $j$ and put $Y_{j}:=\Cone(\mathrm{pr}_{j}\circ g)$, so that
\[
Y=(\Sm^{d})^{\wed r}\cup_{g}D^{d+2},\qquad Y_{j}=\Sm^{d}\cup_{\mathrm{pr}_{j}\circ g}D^{d+2}.
\]
Since $\mathrm{pr}_{j}\circ g=(\mathrm{pr}_{j}\circ g)\circ\mathrm{id}$, there is a map
$f_{j}\colon Y\to Y_{j}$ which is $\mathrm{pr}_{j}$ on $(\Sm^{d})^{\wed r}$ and the identity on
$D^{d+2}$. With $i,i_{j}$ the inclusions of the bases and $q,q_{j}$ the maps collapsing them,
this gives a map of cofibration sequences
\[
\begin{tikzcd}
\Sm^{d+1}\arrow[r,"g"]\arrow[d,equal] & (\Sm^{d})^{\wed r}\arrow[r,"i"]\arrow[d,"\mathrm{pr}_{j}"] & Y\arrow[r,"q"]\arrow[d,"f_{j}"] & \Sm^{d+2}\arrow[d,equal]\\
\Sm^{d+1}\arrow[r,"\mathrm{pr}_{j}\circ g"'] & \Sm^{d}\arrow[r,"i_{j}"'] & Y_{j}\arrow[r,"q_{j}"'] & \Sm^{d+2}
\end{tikzcd}
\]
in which the right-hand vertical map is the identity because $f_{j}$ is the identity on
$D^{d+2}$.

All cohomology is with $\F_{2}$ coefficients. Let $u\in\widetilde H^{d+2}(\Sm^{d+2})$ and
$\iota\in\widetilde H^{d}(\Sm^{d})$ be the generators. The bases $(\Sm^{d})^{\wed r}$ and
$\Sm^{d}$ have no cohomology in degrees $d+1$ and $d+2$, and $\Sm^{d+1}$ and $\Sm^{d+2}$ have
none in degree $d$, so the long exact sequences of the two rows show that $q^{*}$ and
$q_{j}^{*}$ are isomorphisms in degree $d+2$, and that $i^{*}$ and $i_{j}^{*}$ are
isomorphisms in degree $d$. Thus $z=q^{*}u$; put $z_{j}:=q_{j}^{*}u$ and
$w_{j}:=(i_{j}^{*})^{-1}\iota$, and write $f_{j}^{d}$ and $f_{j}^{d+2}$ for the maps induced
by $f_{j}$ on $\widetilde H^{d}$ and on $\widetilde H^{d+2}$. Applying cohomology to the right-hand and the middle square of
the diagram above gives the commutative squares
\[
\begin{tikzcd}
\widetilde H^{d+2}(Y_{j})\arrow[d,"f_{j}^{d+2}"'] & \widetilde H^{d+2}(\Sm^{d+2})\arrow[l,"q_{j}^{*}"',"\cong"]\arrow[d,equal]\\
\widetilde H^{d+2}(Y) & \widetilde H^{d+2}(\Sm^{d+2})\arrow[l,"q^{*}","\cong"']
\end{tikzcd}
\qquad\qquad
\begin{tikzcd}
\widetilde H^{d}(Y_{j})\arrow[r,"i_{j}^{*}","\cong"']\arrow[d,"f_{j}^{d}"'] & \widetilde H^{d}(\Sm^{d})\arrow[d,"\mathrm{pr}_{j}^{*}"]\\
\widetilde H^{d}(Y)\arrow[r,"i^{*}","\cong"'] & \widetilde H^{d}\bigl((\Sm^{d})^{\wed r}\bigr)
\end{tikzcd}
\]
The first gives $f_{j}^{d+2}z_{j}=f_{j}^{d+2}q_{j}^{*}u=q^{*}u=z$. For the second, let $\mathrm{in}_{k}\colon\Sm^{d}\to(\Sm^{d})^{\wed r}$, for
$1\le k\le r$, be the inclusion of the $k$-th summand. The restrictions
$\mathrm{in}_{k}^{*}$ split the cohomology of the wedge as a direct sum,
\[
\widetilde H^{d}\bigl((\Sm^{d})^{\wed r}\bigr)\ \xrightarrow[\ \cong\ ]{\ (\mathrm{in}_{1}^{*},\dots,\mathrm{in}_{r}^{*})\ }\ \bigoplus_{k=1}^{r}\widetilde H^{d}(\Sm^{d}),
\]
so a class is determined by its restrictions to the summands. Since
$\mathrm{pr}_{j}\circ\mathrm{in}_{k}$ is the identity of $\Sm^{d}$ if $k=j$ and constant
otherwise, $\mathrm{in}_{k}^{*}\mathrm{pr}_{j}^{*}\iota$ is $\iota$ for $k=j$ and $0$ for
$k\ne j$. Thus $\mathrm{pr}_{j}^{*}\iota$ is the generator in the $j$-th coordinate of this
splitting, which by the definition of $x_{j}$ is $i^{*}x_{j}$. Hence
$i^{*}f_{j}^{d}w_{j}=\mathrm{pr}_{j}^{*}i_{j}^{*}w_{j}=\mathrm{pr}_{j}^{*}\iota=i^{*}x_{j}$,
and $f_{j}^{d}w_{j}=x_{j}$ since $i^{*}$ is injective.

Naturality of $\Sq^{2}$ along $f_{j}$ gives the commutative square
\[
\begin{tikzcd}
\widetilde H^{d}(Y_{j})\arrow[r,"\Sq^{2}"]\arrow[d,"f_{j}^{d}"'] & \widetilde H^{d+2}(Y_{j})\arrow[d,"f_{j}^{d+2}"]\\
\widetilde H^{d}(Y)\arrow[r,"\Sq^{2}"] & \widetilde H^{d+2}(Y)
\end{tikzcd}
\]
so $\Sq^{2}x_{j}=f_{j}^{d+2}\Sq^{2}w_{j}$. If $\mathrm{pr}_{j}\circ g=\eta$, then $Y_{j}=\Sigma^{d-2}\CP^{2}$ and $\Sq^{2}w_{j}=z_{j}$,
because $\Sq^{2}$ commutes with the suspension isomorphism
$\widetilde H^{*}(\CP^{2})\cong\widetilde H^{*+d-2}(\Sigma^{d-2}\CP^{2})$ and
$\Sq^{2}x=x^{2}\ne0$ for the generator $x\in H^{2}(\CP^{2})$; hence, applying
$f_{j}^{d+2}$, $\Sq^{2}x_{j}=f_{j}^{d+2}\Sq^{2}w_{j}=f_{j}^{d+2}z_{j}=z$. If $\mathrm{pr}_{j}\circ g=0$, then $Y_{j}=\Sm^{d}\wed\Sm^{d+2}$ and $w_{j}=\rho^{*}\iota$ for
the map $\rho\colon Y_{j}\to\Sm^{d}$ collapsing $S^{d+2}$, so $\Sq^{2}w_{j}=\rho^{*}\Sq^{2}\iota=0$ since
$\widetilde H^{d+2}(\Sm^{d})=0$; hence $\Sq^{2}x_{j}=f_{j}^{d+2}\Sq^{2}w_{j}=0$. Since the $x_{j}$ form a basis of $\widetilde H^{\,d}(Y)$, $\Sq^{2}$ is
non-zero if and only if $\mathrm{pr}_{j}\circ g=\eta$ for some $j$, that is, if and only if
$g\not\simeq0$.

If $g\simeq0$ then $Y\simeq(\Sm^{d})^{\wed r}\wed\Sm^{d+2}$. Suppose $g\not\simeq0$. By the Freudenthal suspension theorem the suspension
$\pi_{3}\bigl((\Sm^{2})^{\wed r}\bigr)\to\pi_{4}\bigl((\Sm^{3})^{\wed r}\bigr)$ is surjective,
since $(\Sm^{2})^{\wed r}$ is $1$-connected, and the suspensions
$\pi_{k+1}\bigl((\Sm^{k})^{\wed r}\bigr)\to\pi_{k+2}\bigl((\Sm^{k+1})^{\wed r}\bigr)$ are
isomorphisms for $k\ge3$. Hence $g=\Sigma^{d-2}\phi$ for a map of spaces
$\phi\colon\Sm^{3}\to(\Sm^{2})^{\wed r}$, and $Y\simeq\Sigma^{d-2}M$, where $M$ is the cone on
$\phi$. It is a $4$-dimensional CW-complex with a single $4$-cell and
\[
H_{1}(M;\Z)=0,\qquad H_{2}(M;\Z)=\Z^{r},\qquad H_{3}(M;\Z)=0,\qquad H_{4}(M;\Z)=\Z .
\]
This is the homology required in \cite[(11)]{SoTheriault}, with no torsion and with the
free ranks in degrees $1$ and $3$ equal to zero. Since $Y\simeq\Sigma^{d-2}M$ and $\Sq^{2}$
commutes with the suspension isomorphism, $\Sq^{2}$ acts non-trivially on $H^{*}(M;\F_{2})$.
Hence \cite[Thm.~5.9]{SoTheriault} applies, and with all Moore spaces and all $\Sm^{2}$ and
$\Sm^{4}$ summands absent it gives
\[
\Sigma M\ \simeq\ (\Sm^{3})^{\wed r-1}\wed\Sigma\CP^{2},
\]
hence $Y\simeq\Sigma^{d-3}\Sigma M\simeq(\Sm^{d})^{\wed r-1}\wed\Sigma^{d-2}\CP^{2}$.
\end{proof}
We first treat simply connected spaces with free homology in three consecutive degrees, for
$d\ge3$, and then pass to spectra in \Cref{cor:classification}.
\begin{proposition}\label{prop:classification}
Let $d\ge3$ and let $A$ be a simply connected finite CW complex whose reduced integral
homology is free and concentrated in degrees $d,d+1,d+2$, of ranks $a,b,c$. Then there is a
unique integer $\nu$ with $0\le\nu\le\min(a,c)$ such that
\[
A\ \simeq\ (\Sigma^{d-2}\CP^{2})^{\wed\nu}\ \wed\ (\Sm^{d})^{\wed a-\nu}\ \wed\ (\Sm^{d+1})^{\wed b}\ \wed\ (\Sm^{d+2})^{\wed c-\nu}.
\]
Moreover
$\nu=\rk_{\F_2}\bigl(\Sq^{2}\colon \widetilde H^{\,d}(A;\F_2)\to\widetilde H^{\,d+2}(A;\F_2)\bigr)$.
\end{proposition}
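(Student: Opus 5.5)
We would build a minimal cell structure and then peel off summands. Since $A$ is simply connected with free homology concentrated in degrees $d,d+1,d+2$ and $d\ge3$, \cite[Prop.~4C.1]{Hatcher} lets us replace $A$ by a complex with one $0$-cell, $a$ cells of dimension $d$, $b$ of dimension $d+1$ and $c$ of dimension $d+2$. The cellular chain groups then have the same ranks as the homology, so the cellular differential vanishes.

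The $d$-skeleton is $(\Sm^{d})^{\wed a}$. Each $(d+1)$-cell is attached by an element of $\pi_{d}\bigl((\Sm^{d})^{\wed a}\bigr)\cong\Z^{a}$, and by Hurewicz this element is its cellular boundary, hence zero. So the $(d+1)$-skeleton is $(\Sm^{d})^{\wed a}\wed(\Sm^{d+1})^{\wed b}$.

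Each $(d+2)$-cell is attached by some $g_{k}\in\pi_{d+1}$ of this wedge. By \eqref{eq:wedgesum} with $n=d\ge3$ and $i=d+1$, we have
\[
\pi_{d+1}\bigl((\Sm^{d})^{\wed a}\wed(\Sm^{d+1})^{\wed b}\bigr)\cong(\Z/2)^{a}\oplus\Z^{b},
\]
where the $\Z^{b}$ part is detected by Hurewicz. Vanishing of the cellular differential kills that part, so each $g_{k}$ factors through $(\Sm^{d})^{\wed a}$. Each $g_{k}$ is therefore recorded by a vector $v_{k}\in\F_2^{a}$: its components $\mathrm{pr}_{j}\circ g_{k}\in\{0,\eta\}$. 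Consequently $A\simeq(\Sm^{d+1})^{\wed b}\wed A'$, where $A'$ is obtained from $(\Sm^{d})^{\wed a}$ by attaching $c$ cells of dimension $d+2$ by the $g_{k}$.

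The heart of the proof is diagonalizing the $a\times c$ matrix $V=(v_{1}\mid\cdots\mid v_{c})$ over $\F_2$ by homotopy equivalences.

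Column operations: replace the attaching map of one top cell by $g_{k}+g_{l}$. This is realized by a self-equivalence of $A'$ that slides one $(d+2)$-cell over another; equivalently, precompose the attaching map $\bigvee\Sm^{d+1}\to(\Sm^{d})^{\wed a}$ with a self-equivalence of $(\Sm^{d+1})^{\wed c}$, which is possible because $\pi_{d+1}$ of a wedge of $(d+1)$-spheres is free on the summands.

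Row operations: postcompose with a self-equivalence of $(\Sm^{d})^{\wed a}$ given by an integral matrix in $GL_a(\Z)$. Elementary matrices act on the $\eta$-components linearly mod~$2$, since $\eta$ is stable and composition is bilinear in this range.

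Both kinds of operation preserve the homotopy type of the cone. After Gaussian elimination, $V$ becomes a partial identity of rank $\nu=\rk_{\F_2}V$, and the cone splits as a wedge of copies of $\Cone(\eta)=\Sigma^{d-2}\CP^{2}$ together with spheres. This gives the stated form, and it is consistent with \Cref{lem:etacofiber} applied cell by cell.

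For the Steenrod square identification and uniqueness, compute $\Sq^{2}$ on the decomposed form. On each $\Sigma^{d-2}\CP^{2}$ summand it is an isomorphism, by the argument in \Cref{lem:etacofiber}. On the sphere summands it vanishes, and it is additive over wedges. Hence
\[
\rk\Sq^{2}\colon\widetilde H^{\,d}(A;\F_2)\to\widetilde H^{\,d+2}(A;\F_2)
\]
equals $\nu$. Since $\Sq^{2}$ is a homotopy invariant, $\nu$ is unique, and $\nu\le\min(a,c)$ is automatic.

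The step I expect to be the main obstacle is making the row and column operations rigorous, namely that they are realized by genuine homotopy equivalences of the cones. I would handle this by working with the cofiber sequence $(\Sm^{d+1})^{\wed c}\xrightarrow{g}(\Sm^{d})^{\wed a}\to A'$ and noting that precomposing or postcomposing $g$ with equivalences gives equivalent cofibers. The remaining point to check is that the induced action on $[\,(\Sm^{d+1})^{\wed c},(\Sm^{d})^{\wed a}\,]\cong M_{a\times c}(\Z/2)$ is by mod~$2$ reduction of the matrices, which follows from \eqref{eq:wedgesum} and bilinearity of composition in the stable range.
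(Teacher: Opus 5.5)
Your proof is correct, but its key step differs from the paper's. The paper attaches the $(d+2)$-cells one at a time and inducts. At each stage the new attaching map is pushed into the already-split wedge. Its components on earlier $\Sigma^{d-2}\CP^{2}$ summands are killed by \Cref{lem:cp2pi}, and the new cone is identified by \Cref{lem:etacofiber}, whose splitting $\Cone(g)\simeq\Sigma^{d-2}\CP^{2}\wed(\Sm^{d})^{\wed r-1}$ is taken from So--Theriault.

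You instead treat all top cells at once. Via \eqref{eq:wedgesum} the whole attaching datum becomes a matrix $V\in M_{a\times c}(\F_2)$ of Hopf components. You then bring it to $\mathrm{diag}(I_\nu,0)$ using self-equivalences of $(\Sm^{d+1})^{\wed c}$ and $(\Sm^{d})^{\wed a}$. This is justified because $g_*$ is a homomorphism and precomposition with the suspension $\eta$ is additive. Elementary matrices over $\F_2$ lift to integral ones and generate the general linear groups, so the elimination is fully available.

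Your route is more elementary and self-contained. It needs neither \Cref{lem:cp2pi} nor the So--Theriault theorem. Its case $c=1$ proves the splitting half of \Cref{lem:etacofiber} directly: choose a matrix in $GL_r(\Z)$ whose reduction sends $v$ to $e_1$. It also exhibits $\nu$ explicitly as $\rk_{\F_2}V$.

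The paper's one-cell-at-a-time format pays off in reuse. Its inductive step is exactly what \Cref{prop:dichotomy} applies to the skein connecting map. There the target already contains $\Sigma^{d-2}\CP^{2}$ summands, so \Cref{lem:cp2pi} and the one-extra-summand form of \Cref{lem:etacofiber} are needed in any case.
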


\begin{proof}
As in the proof of \Cref{lem:factorspectra}, \cite[Prop.~4C.1]{Hatcher} gives a CW model of
$A$ with one $0$-cell and $a$, $b$, $c$ cells of dimensions $d$, $d+1$, $d+2$; its cellular
differentials vanish, and the attaching maps of the $(d+1)$-cells are null. So we may take
the $(d+1)$-skeleton to be
\[
W=(\Sm^{d})^{\wed a}\wed(\Sm^{d+1})^{\wed b}.
\]
Attach the $(d+2)$-cells one at a time, and let $A_k$ consist of $W$ and the first $k$ of
them. We prove inductively that
\begin{equation}\label{eq:Xk}
A_k\simeq(\Sigma^{d-2}\CP^{2})^{\wed\nu_k}\wed(\Sm^{d})^{\wed a-\nu_k}\wed(\Sm^{d+1})^{\wed b}\wed(\Sm^{d+2})^{\wed k-\nu_k},
\end{equation}
where $\nu_0=0$ and $\nu_k-\nu_{k-1}\in\{0,1\}$.

The next attaching map $\psi_k\colon\Sm^{d+1}\to W\subset A_{k-1}$ is zero on integral
homology, since $(\psi_k)_*[\Sm^{d+1}]\in\widetilde H_{d+1}(W;\Z)$ is the cellular boundary of
the new cell. Regard it, via \eqref{eq:Xk} for $k-1$, as a map into that wedge. By
\eqref{eq:wedgesum} it is determined by its components on the summands: those on
$\Sm^{d+1}$ vanish because they are degrees, those on $\Sm^{d+2}$ by connectivity, and those
on $\Sigma^{d-2}\CP^{2}$ by \Cref{lem:cp2pi}. So $\psi_k$ factors up to homotopy through the
remaining $d$-spheres. By \Cref{lem:etacofiber}, if this map is zero the new cell adds a
sphere $\Sm^{d+2}$, and otherwise it combines with one $d$-sphere into
$\Sigma^{d-2}\CP^{2}$; the other summands are unchanged. If no $d$-sphere is left, the map
is zero. This proves \eqref{eq:Xk}, and $k=c$ gives the decomposition.

Since $\Sq^{2}$ is non-zero on $\widetilde H^{d}(\Sigma^{d-2}\CP^{2};\F_2)$ and zero on
spheres, $\Sq^{2}\colon\widetilde H^{d}(A;\F_2)\to\widetilde H^{d+2}(A;\F_2)$ has rank
$\nu_c$. This proves uniqueness, and $\nu_c\le\min(a,c)$ because these groups have
dimensions $a$ and $c$.
\end{proof}

\begin{corollary}\label{cor:classification}
Let $X$ be a finite spectrum whose integral homology is free and concentrated in three
consecutive degrees $d,d+1,d+2$, of ranks $a,b,c$. Then there is a unique integer $\nu$
with $0\le\nu\le\min(a,c)$ such that
\[
X\ \simeq\ (\Sigma^{d-2}\Sigma^{\infty}\CP^{2})^{\wed\nu}\ \wed\ (\Sph^{d})^{\wed a-\nu}\ \wed\ (\Sph^{d+1})^{\wed b}\ \wed\ (\Sph^{d+2})^{\wed c-\nu}.
\]
Moreover
$\nu=\rk_{\F_2}\bigl(\Sq^{2}\colon \widetilde H^{\,d}(X;\F_2)\to\widetilde H^{\,d+2}(X;\F_2)\bigr)$.
\end{corollary}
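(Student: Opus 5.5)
The plan is to reduce \Cref{cor:classification} to \Cref{prop:classification} by desuspending a suitable space-level model. First, $X$ is a finite spectrum, so there is an integer $N$ and a finite CW complex $Y$ with $\Sigma^{\infty}Y\simeq\Sigma^{N}X$. I would choose $N$ so large that $d+N\ge3$ and so that $Y$ may be taken simply connected; replacing $Y$ by a further suspension if necessary makes both conditions hold. The reduced integral homology of $Y$ equals that of $\Sigma^{N}X$. By hypothesis and \Cref{conv:variance}, this homology is free and concentrated in degrees $d+N$, $d+N+1$, $d+N+2$, with ranks $a$, $b$, $c$.

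Next I apply \Cref{prop:classification} to $Y$ with $d+N$ in place of $d$. This gives a unique $\nu$ with
\[
Y\simeq(\Sigma^{d+N-2}\CP^{2})^{\wed\nu}\wed(\Sm^{d+N})^{\wed a-\nu}\wed(\Sm^{d+N+1})^{\wed b}\wed(\Sm^{d+N+2})^{\wed c-\nu},
\]
and identifies $\nu$ with the rank of $\Sq^{2}$ on $\widetilde H^{d+N}(Y;\F_2)$. I then pass to suspension spectra. $\Sigma^{\infty}$ preserves wedges and homotopy equivalences, and $\Sigma^{-N}$ is an inverse equivalence on spectra. Together these give the stated decomposition of $X$, with $\Sigma^{d-2}\Sigma^{\infty}\CP^{2}$ in the appropriate place.

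For the $\Sq^{2}$ statement, I use that Steenrod squares commute with the suspension isomorphism. Hence the rank of $\Sq^{2}\colon\widetilde H^{d}(X;\F_2)\to\widetilde H^{d+2}(X;\F_2)$ equals the corresponding rank for $Y$, which is $\nu$. Uniqueness follows because any decomposition of this form has $\Sq^{2}$-rank equal to its number of $\CP^{2}$ summands. This holds since $\Sq^{2}$ is nonzero on $\Sigma^{d-2}\CP^{2}$ and vanishes on wedges of spheres. The bound $\nu\le\min(a,c)$ is inherited from \Cref{prop:classification}, or follows directly from dimension counts.

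The only point needing care is the existence of the space-level model $Y$ with the right connectivity. This is a standard fact about finite CW spectra: some suspension of $X$ is the suspension spectrum of a finite complex, and suspending that complex twice more makes it simply connected. No new obstacle is expected, since everything else is formal.
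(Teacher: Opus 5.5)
Your proposal is correct and follows essentially the same route as the paper. Both suspend until $d\ge3$ and $X$ is the suspension spectrum of a simply connected finite complex, apply \Cref{prop:classification}, and transport the splitting and the $\Sq^{2}$-rank back, using that $\Sigma^{\infty}$ and suspension preserve wedges and commute with $\Sq^{2}$; uniqueness comes from counting the $\CP^{2}$ summands by the rank of $\Sq^{2}$.
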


\begin{proof}
Suspension is an equivalence of spectra that raises $d$ by one, and both it and
$\Sigma^{\infty}$ preserve wedges and commute with $\Sq^{2}$. Since $X$ is finite, after
sufficiently many suspensions we have $d\ge3$ and $X=\Sigma^{\infty}A$ for a simply
connected finite CW complex $A$, which satisfies the hypotheses of
\Cref{prop:classification}. Applying $\Sigma^{\infty}$ to the splitting of $A$ gives that
of $X$, and the last paragraph of the proof of \Cref{prop:classification} applies verbatim
to $X$, giving the uniqueness of $\nu$ and the formula for it.
\end{proof}

From now on we use the notation of spaces for the corresponding spectra: we write $\Sm^{k}$
for the sphere spectrum $\Sph^{k}$ and $\Sigma^{k}\CP^{2}$ for $\Sigma^{k}\Sigma^{\infty}\CP^{2}$,
so that \Cref{cor:classification} has the same form as \Cref{prop:classification}, without
the restriction $d\ge3$. Both are (de)suspensions of suspension spectra, so no confusion with
the corresponding spaces arises; when a space is meant, we say so.

\begin{definition}\label{def:nu}
For $m,n\ge2$ and $q\equiv m+n\pmod2$ let $\nu(m,n;q)$ denote the integer supplied by \Cref{cor:classification} for $X=\Xo^{q}(L_{m,n})$; by \Cref{lem:window} this gives \eqref{eq:intro-classification}.
\end{definition}

\begin{corollary}\label{cor:room}
For all $m,n\ge2$ and all $q\equiv m+n\pmod 2$,
\[
\nu(m,n;q)\ \le\ \min\bigl\{P(d),\,P(d+2)\bigr\}.
\]
\end{corollary}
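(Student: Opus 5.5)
This is immediate from \Cref{cor:classification} combined with the rank count in \Cref{lem:window}, so the plan is simply to match the two up.

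Fix $m,n\ge2$ and $q\equiv m+n\pmod 2$, and put $X=\Xo^{q}(L_{m,n})$ and $d=(q-m-n)/2$. By \Cref{lem:window}, $X$ has free integral homology concentrated in degrees $d,d+1,d+2$, with ranks
\[
a=P(d),\qquad b=2P(d+1),\qquad c=P(d+2),
\]
in the sense of \Cref{conv:variance}. These are exactly the hypotheses of \Cref{cor:classification}. That corollary supplies the integer $\nu$, which by \Cref{def:nu} is $\nu(m,n;q)$, and it already asserts $0\le\nu\le\min(a,c)$. Substituting $a=P(d)$ and $c=P(d+2)$ gives the claim.

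To make the bound independent of the classification, one can also read it off from the formula $\nu=\rk_{\F_2}\Sq^2$. The map $\Sq^2\colon\widetilde H^{d}(X;\F_2)\to\widetilde H^{d+2}(X;\F_2)$ goes between $\F_2$-vector spaces of dimensions $P(d)$ and $P(d+2)$; these dimensions hold because the integral cohomology is free, so universal coefficients gives $\F_2$-dimension equal to integral rank. A linear map has rank at most the dimension of its source and of its target, so $\nu\le\min\{P(d),P(d+2)\}$.

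The only point needing any care is a degenerate case: if some rank vanishes (for instance when $d<0$), the homology may not be supported in all three degrees. \Cref{cor:classification} still applies, since zero ranks are allowed, and the rank-of-$\Sq^2$ formula then gives $\nu=0$ trivially. No step is a real obstacle.
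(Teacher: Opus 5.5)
Your proposal is correct and is the paper's own argument: Lemma~\ref{lem:window} puts $\Xo^{q}(L_{m,n})$ in the setting of Corollary~\ref{cor:classification} with $(a,b,c)=\bigl(P(d),2P(d+1),P(d+2)\bigr)$, and that corollary already gives $\nu\le\min(a,c)$. Your second derivation, bounding the rank of $\Sq^2$ by the $\F_2$-dimensions of its source and target, is also valid, and it is the same reasoning the classification proof uses to obtain its bound.
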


\begin{proof}
By \Cref{lem:window} the spectrum $\Xo^{q}(L_{m,n})$ satisfies the hypotheses of
\Cref{cor:classification} with $(a,b,c)=\bigl(P(d),\,2P(d+1),\,P(d+2)\bigr)$, and that
corollary bounds $\nu$ by $\min(a,c)$.
\end{proof}
We now prove \Cref{thm:vanishing}.
\begingroup
\renewcommand{\thetheorem}{A}\renewcommand{\theHtheorem}{thmA}
\begin{theorem}\label{thm:vanishing}
Let $m,n\ge2$. If
\[
d\in\{-2,\,-1,\,1,\,m+n-1,\,m+n\},
\]
equivalently
\[
q\in\bigl\{m{+}n{-}4,\ m{+}n{-}2,\ m{+}n{+}2,\ 3(m{+}n){-}2,\ 3(m{+}n)\bigr\},
\]
then $\nu(m,n;q)=0$, so $\Xo^{q}(L_{m,n})$ is a wedge of spheres.
\end{theorem}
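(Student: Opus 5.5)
The plan is to combine \Cref{cor:room} with the vanishing statements of \Cref{lem:Pvanishing}: in each listed grading one of the two outer ranks $P(d)$, $P(d+2)$ is zero, so $\nu(m,n;q)\le\min\{P(d),P(d+2)\}=0$.

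First I check that the two lists agree. Since $d=(q-m-n)/2$, we have $q=m+n+2d$. The values $d=-2,-1,1$ give $q=m+n-4,\ m+n-2,\ m+n+2$, and $d=m+n-1,\ m+n$ give $q=3(m+n)-2,\ 3(m+n)$. All these $q$ have the parity $q\equiv m+n\pmod 2$, so $\nu$ is defined by \Cref{def:nu}.

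Next I go through the cases using \Cref{lem:Pvanishing}, which says $P(t)=0$ for $t\le-1$, for $t=1$, and for $t\ge m+n+1$.
\begin{enumerate}[label=\textup{(\roman*)}]
\item For $d=-2$ or $d=-1$ we have $P(d)=0$, since $d\le-1$.
\item For $d=1$ we have $P(d)=P(1)=0$.
\item For $d=m+n-1$ or $d=m+n$ we have $d+2\ge m+n+1$, so $P(d+2)=0$.
\end{enumerate}
In every case $\min\{P(d),P(d+2)\}=0$, and \Cref{cor:room} gives $\nu=0$.

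With $\nu=0$, the decomposition \eqref{eq:intro-classification} (equivalently \Cref{cor:classification}) has no $\Sigma^{d-2}\CP^2$ summand, so $\Xo^q(L_{m,n})$ is a wedge of spheres. In some of these gradings the spectrum is even contractible, for example when $d=-2$, where $P(-2)=P(-1)=P(0)\cdot 0$ contributions vanish except possibly $P(0)$; this is consistent with the claim.

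I expect no real obstacle, since the argument is only bookkeeping once \Cref{cor:room} is available. The one point to watch is the case $d<0$. There \Cref{cor:classification} is still applicable, because the corollary handles arbitrary $d$ by suspension.
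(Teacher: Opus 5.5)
Your proof is correct and is the paper's own argument: in each listed grading \Cref{lem:Pvanishing} gives $P(d)=0$ or $P(d+2)=0$, and \Cref{cor:room} then forces $\nu(m,n;q)=0$. The one flaw is your aside on contractibility, which is false and should be deleted: none of these five spectra is contractible; for example, at $d=-2$ the ranks are $\bigl(P(-2),2P(-1),P(0)\bigr)=(0,0,1)$, so $\Xo^{m+n-4}(L_{m,n})\simeq\Sm^{0}$.
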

\addtocounter{theorem}{-1}\endgroup

\begin{proof}
The two lists correspond under $q=m+n+2d$. For each listed value,
\Cref{lem:Pvanishing} gives $P(d)=0$ or $P(d+2)=0$, so
\Cref{cor:room} gives $\nu=0$.
\end{proof}

\begin{remark}\label{rem:fivegradings}
The spectrum is contractible outside $-2\le d\le m+n$.
Within this range, the listed five values are precisely those for which
$P(d)P(d+2)=0$. The interior exception $d=1$ comes from the missing
homological degree $1$ in each factor.
\end{remark}

\section{The skein cofiber sequence and its source}\label{sec:skein}

\subsection{The cofibration}\label{ss:cofibration}

Let $D$ be an oriented diagram with a distinguished positive crossing $x$, and let $D_0$
and $D_1$ be the $0$- and $1$-resolution of $D$ at $x$. Splitting the cube of resolutions along the coordinate $x$ gives
a cofibration sequence of signed Burnside functors (\Cref{fig:cube}). On chain level it is a
short exact sequence
\begin{equation}\label{eq:skein-ses}
0\longrightarrow \mathit{CKh}_{o}(D_{1})\longrightarrow \mathit{CKh}_{o}(D)
\longrightarrow \mathit{CKh}_{o}(D_{0})\longrightarrow 0
\end{equation}
of bigraded complexes, up to shifts of the two gradings; after realization in spectra it yields \eqref{eq:skein} below,
the odd analogue of the unoriented skein cofibration of \cite{LS14a}.

\begin{figure}[ht]
\centering
\begin{tikzpicture}[>=stealth,every node/.style={font=\scriptsize}]
\foreach \mode/\xo in {0/0,2/4.3,1/8.6} {
\begin{scope}[xshift=\xo cm]
  \foreach \e/\h in {0/0,1/1.9} {
    \coordinate (A\e) at (0,\h);
    \coordinate (B\e) at (1.6,\h);
    \coordinate (C\e) at (0.65,{\h+0.6});
    \coordinate (D\e) at (2.25,{\h+0.6});
  }
  \foreach \e in {0,1} {
    \pgfmathtruncatemacro{\on}{(\mode==2) || (\mode==\e)}
    \ifnum\on=1
      \foreach \s/\t in {D/B,D/C,B/A,C/A}
        \draw[->,shorten >=2.6pt,shorten <=2.6pt] (\s\e) -- (\t\e);
    \else
      \foreach \s/\t in {D/B,D/C,B/A,C/A}
        \draw[black!25] (\s\e) -- (\t\e);
    \fi
  }
  \ifnum\mode=2
    \foreach \P in {A,B,C,D}
      \draw[->,thick,shorten >=2.6pt,shorten <=2.6pt] (\P1) -- (\P0);
  \else
    \foreach \P in {A,B,C,D}
      \draw[black!25] (\P1) -- (\P0);
  \fi
  \foreach \e in {0,1} {
    \pgfmathtruncatemacro{\on}{(\mode==2) || (\mode==\e)}
    \foreach \P in {A,B,C,D} {
      \ifnum\on=1 \fill (\P\e) circle (1.7pt);
      \else \draw[fill=white] (\P\e) circle (1.7pt); \fi
    }
  }
\end{scope}
}
\node[anchor=east] at (-0.15,0.3) {$\epsilon=0$};
\node[anchor=west] at (11.0,2.2) {$\epsilon=1$};
\node[font=\Large] at (3.25,1.25) {$\hookrightarrow$};
\node[font=\Large] at (7.55,1.25) {$\twoheadrightarrow$};
\node at (1.1,-0.55) {$G$};
\node at (5.4,-0.55) {$F=F^{q}_{o}(D)$};
\node at (9.7,-0.55) {$H$};
\node at (1.1,-1.25) {$\Xo^{\,q-1}(D_{0})$};
\node at (3.25,-1.25) {$\longrightarrow$};
\node at (5.4,-1.25) {$\Xo^{q}(D)$};
\node at (7.55,-1.25) {$\longrightarrow$};
\node at (9.7,-1.25) {$\Sigma^{c+1}\Xo^{\,q-3c-2}(D_{1})$};
\node at (6.8,1.55) {$\delta$};
\end{tikzpicture}
\caption{The cofibration sequence $G\hookrightarrow F\twoheadrightarrow H$ of signed Burnside
functors in the proof of \Cref{lem:skein}, drawn for $n=3$; in each cube the last
coordinate is $\epsilon$, the coordinate of the crossing $x$, and the face $\epsilon=0$ is
at the bottom. A filled dot is a vertex $v$ with its set $F(v)$, an open dot is a vertex
sent to $\varnothing$, and black arrows are the edges on which the functor is nonempty.
Thus $G$ and $H$ are the functors of $D_{0}$ and $D_{1}$ on the two faces, and the vertical
arrows of $F$ give the connecting map $\delta$. The bottom row is the realization; rotating
it gives \eqref{eq:skein}.}
\label{fig:cube}
\end{figure}

\noindent
The even counterpart of \eqref{eq:skein} is proved by exhibiting the flow category of
$D_0$ as a downward closed subcategory of that of $D$;
we give the argument for the odd theory in the language of signed Burnside functors, where
the corresponding statements are \Cref{def:cofseq} and \Cref{lem:realization}(i).

\begin{lemma}\label{lem:skein}
Let $D$ be an oriented link diagram with $n$ crossings and a distinguished \emph{positive}
crossing $x$. Let $D_{0}$ and $D_{1}$ be the $0$- and $1$-resolution of $D$ at $x$. Since
$x$ is positive, $D_{0}$ is the oriented resolution and carries the orientation induced from
$D$; equip $D_{1}$ with an arbitrary orientation. Put
\[
c:=n_{-}(D_{1})-n_{-}(D).
\]
Then for every $q$ there is a cofibration sequence of spectra
\begin{equation}\label{eq:skein}
\Sigma^{c}\,\Xo^{\,q-3c-2}(D_{1})\ \xrightarrow{\ \delta\ }\
\Xo^{\,q-1}(D_{0})\ \longrightarrow\ \Xo^{\,q}(D).
\end{equation}
\end{lemma}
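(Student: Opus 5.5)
The plan is to realize the splitting of the cube of resolutions along the coordinate of $x$ as a cofibration sequence of signed Burnside functors in the sense of \Cref{def:cofseq}, apply \Cref{lem:realization}(i), identify the three realized spectra with the stated shifts, and finally rotate the resulting cofibration sequence. Order the crossings of $D$ so that $x$ is the last one, and write vertices of $\underline{2}^{\,n}$ as $(v,\epsilon)$ with $v\in\{0,1\}^{n-1}$. Let $F=F_o^{q}(D)$. Define $G$ as the restriction of $F$ to the face $\epsilon=0$, extended by $\varnothing$ elsewhere. We take $G(\phi)=F(\phi)$ on morphisms between vertices of that face, and $G(\phi)=\varnothing$ otherwise.

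\textbf{Step 1: $G$ is a sub-functor.} Conditions (i) and (ii) of \Cref{def:cofseq} are immediate. For (iii), note that morphisms go from $u$ to $v$ only when $u\ge v$. So if $u$ lies in the face $\epsilon=0$, so does every $v\le u$, and every element of $F(\phi_{u,v})$ with source in $G(u)$ lies in $G(\phi_{u,v})$. The face $\epsilon=0$ is thus downward closed. The quotient $H$ of \Cref{def:cofseq} is then $F$ restricted to the face $\epsilon=1$.

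\textbf{Step 2: identify the faces and grading shifts.} The resolutions $D_{(v,0)}$ are exactly $(D_0)_v$, and $D_{(v,1)}$ are exactly $(D_1)_v$. Hence the Khovanov generators agree, and so do the edge maps of $\mathit{CKh}_o$ along the face.

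For $D_0$ we have $n_-(D_0)=n_-(D)$ and $n_+(D_0)=n_+(D)-1$, because $x$ is positive and $D_0$ is its oriented resolution. The formula for $q$ then gives $q_D=q_{D_0}+1$, so $G\cong F^{q-1}_o(D_0)$. Since the negative crossing counts agree, $|\Sigma^{-n_-(D)}G|\simeq\Xo^{q-1}(D_0)$.

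For $D_1$ we have $n_-(D_1)=n_-(D)+c$ and $n_+(D_1)=n_+(D)-1-c$. At vertex $(v,1)$ the height is $|v|+1$, which gives
\[
q_D=|Z|-2k+|v|+1+n_+(D)-2n_-(D)=q_{D_1}+3c+2 .
\]
So $H$ is $F^{q-3c-2}_o(D_1)$ placed on a face at height one. Realizing a functor supported on the face $\epsilon=1$ produces one extra suspension, because the iterated mapping cone has cells in dimension $|v|+1+k$. Combining this with the shift $\Sigma^{-n_-(D)}=\Sigma^{c}\Sigma^{-n_-(D_1)}$ gives
\[
|\Sigma^{-n_-(D)}H|\simeq\Sigma^{c+1}\Xo^{q-3c-2}(D_1).
\]

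\textbf{Step 3: conclude.} \Cref{lem:realization}(i) gives the cofibration sequence
\[
\Xo^{q-1}(D_0)\to\Xo^q(D)\to\Sigma^{c+1}\Xo^{q-3c-2}(D_1).
\]
Rotating it once to the left (desuspending the third term) yields \eqref{eq:skein}, with $\delta$ the connecting map.

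\textbf{Main obstacle.} I expect the hardest step to be the identification in Step 2 at the level of \emph{signed} functors, not just of chain complexes. The odd functor depends on auxiliary choices: orientations of the crossing arrows, a sign assignment or type on the cube, and the coherent bijections of 2-faces. I will argue that these choices for $D$ restrict on each face to admissible choices for $D_0$ and $D_1$. Then, by the independence results of \cite{SSS} (the invariance underlying \cite[Thm.~1.1]{SSS}), the restricted functors realize to the same spectra as $F_o(D_0)$ and $F_o(D_1)$. The arbitrary orientation chosen on $D_1$ enters only through $n_\pm(D_1)$, and that is already absorbed into $c$.
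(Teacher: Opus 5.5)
Your proposal is correct, and its architecture is the paper's: split the cube along the coordinate of $x$, note that the face $\epsilon=0$ is downward closed and so gives a sub-functor $G$ whose quotient $H$ lives on $\epsilon=1$, restrict the auxiliary choices of \cite{SSS} to the two faces, realize, and rotate. Where you differ is in how the grading shifts are pinned down. The paper only records that the realization has the form $\Sigma^{a}\Xo^{q+b}(D_0)\to\Xo^{q}(D)\to\Sigma^{a'}\Xo^{q+b'}(D_1)$, with constants depending on $D$ and $x$. It then reads off $a=0$, $b=-1$, $a'=c+1$, $b'=-3c-2$ by matching the induced long exact sequence with the odd skein sequence of \cite{ORS}, whose shifts it takes from Rasmussen and Manolescu--Ozsv\'ath. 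You compute the same four numbers directly from $q=|Z(D_v)|-2k+|v|+n_+-2n_-$, the expressions for $n_\pm(D_0)$ and $n_\pm(D_1)$ in terms of $n_\pm(D)$ and $c$, and the height $|v|+1$ on the face $\epsilon=1$; your values agree with the paper's. Your route is self-contained and does not depend on the external skein-sequence conventions agreeing with those of \cite{SSS}. The paper's comparison makes explicit that $\delta^{*}$ is the ORS connecting map, which \Cref{lem:deltastar} uses. In your version this is automatic as well, since the cellular cochains of $|G|\to|F|\to|H|$ are exactly the short exact sequence \eqref{eq:skein-ses}. One small point: justify the extra suspension for $H$ by the iterated-cofiber description rather than by counting cell dimensions. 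The total cofiber of the cube is the cofiber of the induced map from the total cofiber of the $\epsilon=1$ face to that of the $\epsilon=0$ face, and when the latter is trivial this is the suspension of the former. The paper relies on the same fact implicitly.
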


\begin{proof}
Order the crossings of $D$ so that $x$ is the last one, write a vertex of the cube as
$v=(u,\epsilon)$ with $u\in\underline{2}^{\,n-1}$ and $\epsilon\in\{0,1\}$, and let
$F:=F^{q}_{o}(D)$.

Define $G$ by $G(v)=F(v)$ if
$\epsilon=0$ and $G(v)=\varnothing$ if $\epsilon=1$, and on morphisms by
$G(\phi_{v',v''})=F(\phi_{v',v''})$ when $v'$ and $v''$ both have last coordinate $0$ and
$G(\phi_{v',v''})=\varnothing$ otherwise. Conditions (i) and (ii) of \Cref{def:cofseq}
hold by construction. For (iii), let $v'\ge v''$ and $y\in G(v')$, and let $s\colon F(\phi_{v',v''})\to F(v')$ be
the source map of $F$. Since $G(v')\neq\varnothing$, the last coordinate of $v'$ is $0$, and
since $v''\le v'$, so is that of $v''$. Hence $G(\phi_{v',v''})=F(\phi_{v',v''})$, which
contains $s^{-1}(y)$. So $G$ is a sub-functor of
$F$, and with its quotient $H$ (\Cref{def:cofseq}) we obtain a cofibration sequence of
signed Burnside functors
\[
G\ \longrightarrow\ F\ \longrightarrow\ H .
\]
By \Cref{lem:realization}(i) realization carries it to a cofibration sequence of spectra.

By \Cref{def:cofseq}, $H(v)=F(v)\smallsetminus G(v)$ is $F(v)$ if $\epsilon=1$ and
$\varnothing$ if $\epsilon=0$, and $H(\phi_{v',v''})=t^{-1}\bigl(H(v'')\bigr)$, where
$t\colon F(\phi_{v',v''})\to F(v'')$ is the target map of $F$, is all of $F(\phi_{v',v''})$
if $v''$ has last coordinate $1$ (and then so has $v'\ge v''$), and $\varnothing$ otherwise.
So $H$ is $F$ on the face $\{v:\epsilon=1\}$ and empty elsewhere, just as $G$ is $F$ on the
face $\{v:\epsilon=0\}$.

Deleting the last coordinate identifies both faces with $\underline{2}^{\,n-1}$, the cube of
$D_{\epsilon}$, whose crossings are those of $D$ other than $x$: the resolution
$D_{(u,\epsilon)}$ is $(D_{\epsilon})_{u}$, and an edge inside a face is the surgery at a
crossing of $D_{\epsilon}$. The signed Burnside data at a vertex and at an edge depend only on
the resolution there, on the surgery arc and on the auxiliary choices of \cite[\S5.3]{SSS}
(orientations at the crossings, orderings of the circles, an edge assignment), and the
choices made for $D$ restrict to choices for $D_{\epsilon}$. Hence on their faces $G$ and $H$
are the signed Burnside functors of $D_{0}$ and of $D_{1}$, up to a shift of the two
gradings. The
realization therefore has the form
\begin{equation}\label{eq:skein-shape}
\Sigma^{a}\Xo^{\,q+b}(D_{0})\ \longrightarrow\ \Xo^{\,q}(D)\ \longrightarrow\
\Sigma^{a'}\Xo^{\,q+b'}(D_{1}),
\end{equation}
where $a,b,a',b'$ are integers depending only on $D$ and $x$, not on $q$.

On cohomology, \eqref{eq:skein-shape} induces the odd skein
long exact sequence of \cite[Prop.~1.6]{ORS}. The two gradings of \Cref{ss:oddkh} are given
by the same formulae as in the even theory, so its shifts are the
ones recorded in \cite[Prop.~4.2]{Ras05} and \cite[Prop.~4]{MO} --- the sources to which
\cite{ORS} refer for precisely this point. With $x$ positive and $D_{0}$ the oriented
resolution, those read
\[
\cdots\to \Kho^{\,i-c-1,\,q-3c-2}(D_{1})\to \Kho^{\,i,q}(D)\to \Kho^{\,i,q-1}(D_{0})
\xrightarrow{\delta^{*}} \Kho^{\,i-c,\,q-3c-2}(D_{1})\to\cdots
\]
Comparing this with the long exact sequence of \eqref{eq:skein-shape}, in which the three
terms contribute $\Kho^{\,i-a,\,q+b}(D_{0})$, $\Kho^{\,i,q}(D)$ and
$\Kho^{\,i-a',\,q+b'}(D_{1})$, gives $a=0$, $b=-1$, $a'=c+1$ and $b'=-3c-2$. Rotating the
resulting cofibration sequence $A\to X\to C$ into $\Sigma^{-1}C\to A\to X$ yields
\eqref{eq:skein}, and the map $\delta^{*}$ in the long exact sequence above is induced
by $\delta$.
\end{proof}

\begin{remark}\label{rem:skeinneg}
For a negative crossing, $D_1$ inherits the orientation of $D$. The same
face construction and grading calculation give
\[
\Sigma^{-1}\Xo^{\,q+1}(D_{1})\ \longrightarrow\ \Sigma^{c'}\Xo^{\,q-3c'-1}(D_{0})\
\longrightarrow\ \Xo^{\,q}(D),\qquad c'=n_{-}(D_{0})-n_{-}(D).
\]
It is used in \Cref{sec:mirror} for the mirrors $T(2,-m)\sqcup T(2,-n)$.
\end{remark}

\begin{lemma}\label{lem:deloop}
Let $L$ be a link and $U$ the unknot. Then for every $q$,
\begin{equation}\label{eq:deloop}
\Xo^{q}(L\sqcup U)\ \simeq\ \bigvee_{q_{1}+q_{2}=q}\Xo^{q_{1}}(L)\wedge\Xo^{q_{2}}(U)
\ \simeq\ \Xo^{\,q-1}(L)\ \wed\ \Xo^{\,q+1}(L).
\end{equation}
\end{lemma}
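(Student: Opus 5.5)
The plan is to prove \eqref{eq:deloop} at the level of signed Burnside functors, using a diagram in which the unknot is a crossingless circle, and then to realize the resulting splitting with \Cref{lem:realization}(ii).

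\emph{Setup and the splitting of generators.} By invariance \cite[Thm.~1.1]{SSS}, we may take a diagram $D$ of $L$ and form $D'=D\sqcup O$, where $O$ is a small crossingless circle disjoint from $D$. Then $D'$ has the same crossings, the same cube $\underline{2}^{\,n}$ and the same $n_{\pm}$ as $D$. Every resolution satisfies $Z(D'_v)=Z(D_v)\sqcup\{a\}$, where $a$ is the circle $O$, and we order $a$ last, below every circle of $D_v$. Hence $\mathrm{Kg}_{D'}(v)=\mathrm{Kg}_D(v)\sqcup\{y\wedge a: y\in\mathrm{Kg}_D(v)\}$. The grading formula of \Cref{ss:oddkh} gives $q(y)=q_D(y)+1$ and $q(y\wedge a)=q_D(y)-1$, while $i$ is unchanged.

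\emph{The edge maps.} Next I check the ORS edge maps. A merge is the quotient $\Lambda(Z(D'_v))\to\Lambda(Z(D'_u))$ identifying two circles of $D$. A split is left multiplication by $(a_1-a_2)\wedge$, where $a_1,a_2$ are the two new circles, both circles of $D$. In either case the edge map sends $y\mapsto(\text{edge map of }D)(y)$ and $y\wedge a\mapsto(\text{edge map of }D)(y)\wedge a$. Because $a$ is the smallest circle, appending $\wedge a$ to a normally ordered monomial introduces no reordering sign. So every coefficient satisfies $\varepsilon_{y\wedge a,\,x\wedge a}=\varepsilon_{y,x}$, and there are no mixed terms. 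The auxiliary choices of \cite[\S5.3]{SSS} (crossing orientations, edge assignment) involve only crossings and therefore are the same for $D$ and $D'$.

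\emph{The functor and the realization.} It follows that $F_o^{q}(D')$ is, vertexwise and edgewise, the disjoint union of a copy of $F_o^{q-1}(D)$ (the generators without $a$) and a copy of $F_o^{q+1}(D)$ (those with $a$). \Cref{lem:realization}(ii), with $r=-n_-$, then gives the second equivalence in \eqref{eq:deloop}. The first equivalence is a reformulation. By \Cref{lem:factorspectra} with $k=1$ (the unknot is $T(2,1)$), $\Xo^{\pm1}(U)\simeq\Sph^0$ and all other gradings are contractible. Hence the smash-product wedge reduces to $\Xo^{q-1}(L)\wedge\Sph^0\wed\Xo^{q+1}(L)\wedge\Sph^0$.

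\emph{Main obstacle.} The main point to verify is that the splitting holds as signed Burnside functors and not merely on chain level. A Burnside functor also carries the coherent bijections on $2$-dimensional faces, such as the ladybug-type matchings and the sign assignments. I expect to handle this by observing that every such piece of data in \cite[\S5]{SSS} is determined by the resolution configuration of the face. The circle $a$ is a passive, untouched circle in every configuration, and the SSS construction is functorial in adding such a circle: the correspondence for $y\wedge a$ is that for $y$ with $a$ carried along. If that is not stated explicitly there, I would verify it on the finitely many face types.
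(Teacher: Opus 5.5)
Your proposal is correct and follows the paper's proof essentially step for step. The shared steps are: a crossingless extra circle $a$ ordered last, the grading count splitting $E^{q}(u)$ as $F^{q-1}(u)\sqcup F^{q+1}(u)\wedge a$, edge maps that never touch $a$, \Cref{lem:realization}(ii) with $r=-n_{-}$, and $\Xo^{\pm1}(U)\simeq\Sph^{0}$ for the first equivalence. Your added care goes slightly beyond the paper, which leaves these points implicit in the observation of \cite[\S5.5]{SSS}: placing $a$ below every circle so that $y\wedge a$ is already a Khovanov generator with no reordering sign, and flagging the face bijections.
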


\begin{proof}
The odd Khovanov spectrum of the unknot is
$\Xo^{\,q_{2}}(U)\simeq\Sm^{0}$ for $q_{2}=\pm1$ and $\Xo^{\,q_{2}}(U)\simeq\ast$ otherwise.
So only the two summands with $q_{2}=\pm1$ survive on the right-hand side, and since $\Sm^{0}$
is the unit for the smash product,
\[
\bigvee_{q_{1}+q_{2}=q}\Xo^{\,q_{1}}(L)\wedge\Xo^{\,q_{2}}(U)
\ \simeq\ \big(\Xo^{\,q-1}(L)\wedge\Sm^{0}\big)\vee\big(\Xo^{\,q+1}(L)\wedge\Sm^{0}\big)
\ \simeq\ \Xo^{\,q-1}(L)\vee\Xo^{\,q+1}(L).
\]

For the left-hand side we use the observation of \cite[\S5.5]{SSS}. Let $L$ have $n$
crossings and let $F\colon\underline{2}^{\,n}\to\mathscr B_{\sigma}$ be its signed Burnside
functor. Since $U$ is crossingless and disjoint from $L$, the diagram $L\sqcup U$ has the
same $n$ crossings; its functor $E$ therefore has the same source $\underline{2}^{\,n}$,
and at every vertex $u$ the resolution of $L\sqcup U$ is that of $L$ together with one
extra circle. Fix an ordering of the circles in which that circle comes last, and call it
$a$.

The generators of $E(u)$ are the $y\in F(u)$, which do not involve $a$, and the $y\wedge a$
with $y\in F(u)$. Adjoining $a$ adds one circle and leaves $|u|$, $n$ and $n_{-}$ unchanged,
so the quantum degree of $y$ rises by one while that of $y\wedge a$ falls by one. Read in a
fixed quantum degree $q$, this says that a generator $y$ of $E^{q}(u)$ has degree $q-1$ in
$F$, whereas for a generator $y\wedge a$ of $E^{q}(u)$ the element $y$ has degree $q+1$ in
$F$. Hence
\begin{equation}\label{eq:deloop-obj}
E^{\,q}(u)\ \cong\ \underbrace{F^{\,q-1}(u)}_{\text{not involving }a}\ \sqcup\
\underbrace{F^{\,q+1}(u)\wedge a}_{\text{involving }a},
\end{equation}
the second summand being identified with $F^{\,q+1}(u)$ by $y\mapsto y\wedge a$.

Let now $u\ge v$ in $\underline{2}^{\,n}$. Because $U$ is disjoint from $L$ and carries no
crossings, no edge of the cube merges $a$ with another circle or splits it; every edge acts
on the circles of $L$ alone. The morphism $E^{\,q}(\phi_{u,v})$ therefore preserves the
splitting \eqref{eq:deloop-obj}, and under the two identifications it is
$F^{\,q-1}(\phi_{u,v})\sqcup F^{\,q+1}(\phi_{u,v})$. Consequently
\[
E^{\,q}\ \cong\ F^{\,q-1}\ \sqcup\ F^{\,q+1}
\]
as signed Burnside functors $\underline{2}^{\,n}\to\mathscr B_{\sigma}$.

Since $L\sqcup U$ has the same negative crossings as $L$, \Cref{def:oddspectrum} and
\Cref{lem:realization}(ii) with $r=-n_{-}$ give
\[
\Xo^{q}(L\sqcup U)=\bigl|\Sigma^{-n_{-}}E^{q}\bigr|\ \simeq\
\bigl|\Sigma^{-n_{-}}F^{q-1}\bigr|\vee\bigl|\Sigma^{-n_{-}}F^{q+1}\bigr|
=\Xo^{\,q-1}(L)\vee\Xo^{\,q+1}(L),
\]
which is the right-hand side computed above.
\end{proof}

\begin{remark}
\Cref{lem:deloop} is a case in which the smash formula \eqref{eq:even-smash} \emph{does}
hold for the odd theory: the spectrum of the unknot is a wedge of two spheres, so smashing
with it only reindexes. For split unions in general the formula fails
(\Cref{ss:earlier}).
\end{remark}

We now specialize \Cref{lem:skein} to the links $L_{m,n}$, the only case used below.

\begin{corollary}\label{cor:skeinLmn}
Let $m\ge1$ and $n\ge2$, let $D$ be the standard diagram of $L_{m,n}$, and resolve a
crossing $x$ of its $T(2,n)$ factor (\Cref{fig:skeinLmn}). Then $D_{0}$ is the standard
diagram of $L_{m,n-1}$, $D_{1}$ is a diagram of $T(2,m)\sqcup U$, and $c=n-1$ when
$T(2,m)$ keeps the orientation of $D$. Hence for every $q$ there is a cofibration sequence
\begin{equation}\label{eq:skeinLmn}
\Sigma^{n-1}\Xo^{\,q-3n+1}\bigl(T(2,m)\sqcup U\bigr)\ \xrightarrow{\ \delta\ }\
\Xo^{\,q-1}(L_{m,n-1})\ \longrightarrow\ \Xo^{\,q}(L_{m,n}),
\end{equation}
where $\Xo^{\,q-3n+1}\bigl(T(2,m)\sqcup U\bigr)\simeq\Xo^{\,q-3n}\bigl(T(2,m)\bigr)\vee
\Xo^{\,q-3n+2}\bigl(T(2,m)\bigr)$ by \Cref{lem:deloop}.
\end{corollary}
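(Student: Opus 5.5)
The plan is to apply \Cref{lem:skein} to the standard diagram $D$ of $L_{m,n}$ at a crossing $x$ of the $T(2,n)$ factor. Then we identify the two resolutions, compute $c$, and finish with \Cref{lem:deloop}.

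\emph{Resolutions and sign of $x$.} In the standard diagram, $T(2,n)$ is the closure of the two-strand braid $\sigma_1^{n}$, oriented so that both strands run in the braid direction. Every crossing is then positive, and the oriented resolution of any crossing is the $0$-resolution, which is the braidlike smoothing. Resolving $x$ that way leaves the closure of $\sigma_1^{n-1}$, i.e.\ the standard diagram of $T(2,n-1)$. Its orientation is induced from $D$ and again makes all crossings positive. Since the $T(2,m)$ factor is untouched, $D_0$ is the standard diagram of $L_{m,n-1}$, as claimed. The $1$-resolution is the horizontal smoothing. It turns the remaining $n-1$ crossings of the $T(2,n)$ part into a chain of $n-1$ twists of a single unknotted strand, i.e.\ a twisted unknot diagram. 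Hence $D_1$ is a diagram of $T(2,m)\sqcup U$, with the $U$ component carrying $n-1$ kinks.

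\emph{Computing $c$.} Here $c=n_-(D_1)-n_-(D)$, and $n_-(D)=0$ because $D$ is positive. Keep the orientation of $T(2,m)$, so its $m$ crossings stay positive. Choose any orientation of the unknot component; a crossing between two arcs of the same component has a sign independent of that choice. After the horizontal smoothing, at each of the $n-1$ remaining crossings the two strands run antiparallel relative to the braid orientation. So each of these crossings changes sign and becomes negative. This gives $n_-(D_1)=n-1$, hence $c=n-1$.

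I expect this sign check to be the main obstacle. I would verify it concretely for $n=2$: the Hopf link with one crossing smoothed horizontally leaves a one-kink unknot, and that kink should be negative. The same local picture then applies to each crossing in the chain.

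\emph{Assembling the sequence.} Substituting $c=n-1$ into \eqref{eq:skein} gives $q-3c-2=q-3n+1$, so the sequence is exactly \eqref{eq:skeinLmn}. Finally, $\Xo$ is a link invariant by \cite[Thm.~1.1]{SSS}, so the spectrum of $D_1$ is that of $T(2,m)\sqcup U$. Applying \Cref{lem:deloop} with $L=T(2,m)$ at quantum grading $q-3n+1$ yields the stated splitting $\Xo^{q-3n}(T(2,m))\vee\Xo^{q-3n+2}(T(2,m))$.
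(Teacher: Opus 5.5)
Your proposal is correct and follows the paper's proof: $n_-(D)=0$, the $0$-resolution shortens the twist region, and the $1$-resolution makes the two strands run antiparallel, so the remaining $n-1$ crossings become negative, $c=n-1$, and \Cref{lem:skein} gives \eqref{eq:skeinLmn}. Your use of link invariance to replace the kinked unknot by a crossingless one before applying \Cref{lem:deloop}, and your grading check $q-3c-2=q-3n+1$, only spell out steps the paper leaves implicit.
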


\begin{proof}
In the standard diagram both strands of each twist region point the same way, so every
crossing is positive and $n_{-}(D)=0$. The $0$-resolution at $x$, the oriented one, is the
twist region with one crossing fewer. The $1$-resolution joins the two strands of the
$T(2,n)$ factor into one unknotted component, along which they run in opposite directions;
so its remaining $n-1$ crossings are negative, while those of $T(2,m)$ stay positive. Hence
$n_{-}(D_{1})=n-1$ and $c=n-1$, and \Cref{lem:skein} gives \eqref{eq:skeinLmn}.
\end{proof}

\begin{figure}[ht]
\centering
\def\tilecr#1{\draw (#1+0.6,-0.8) -- (#1-0.6,0.8);
  \draw (#1-0.6,-0.8) -- (#1-0.16,-0.21);
  \draw (#1+0.16,0.21) -- (#1+0.6,0.8);}
\def\tilezero#1{\draw (#1-0.6,0.8)  to[out=-25,in=-155] (#1+0.6,0.8);
  \draw (#1-0.6,-0.8) to[out=25,in=155]   (#1+0.6,-0.8);}
\def\tileone#1{\draw (#1-0.6,0.8) to[out=-40,in=40]   (#1-0.6,-0.8);
  \draw (#1+0.6,0.8) to[out=-140,in=140] (#1+0.6,-0.8);}
\def\braid#1#2#3{\draw[dashed,line width=0.5pt] (7.8,0) circle (1.2);
  \tilecr{0}\tilecr{1.8}\tilecr{6.0}#1{7.8}
  \draw (0.6,0.8) -- (1.2,0.8);    \draw (0.6,-0.8) -- (1.2,-0.8);
  \draw[midarr=0.55] (2.4,0.8) -- (3.2,0.8);
  \draw[#3=0.55]     (2.4,-0.8) -- (3.2,-0.8);
  \draw (4.6,0.8) -- (5.4,0.8);    \draw (4.6,-0.8) -- (5.4,-0.8);
  \draw (6.6,0.8) -- (7.2,0.8);    \draw (6.6,-0.8) -- (7.2,-0.8);
  \node[#2] at (3.9,0.8){$\cdots$}; \node[#2] at (3.9,0){$\cdots$};
  \node[#2] at (3.9,-0.8){$\cdots$};
  \draw (8.4,0.8) -- (8.9,0.8);    \draw (8.4,-0.8) -- (8.9,-0.8);
  \draw (-1.1,0.8) -- (-0.6,0.8);  \draw (-1.1,-0.8) -- (-0.6,-0.8);
  \draw[midarr] (8.9,0.8)  .. controls (11.7,2.7)  and (-3.9,2.7)  .. (-1.1,0.8);
  \draw[#3]     (8.9,-0.8) .. controls (11.7,-2.7) and (-3.9,-2.7) .. (-1.1,-0.8);}
\begin{tikzpicture}[scale=0.85,line width=0.7pt,>=stealth,
  midarr/.style={postaction={decorate},decoration={markings,
    mark=at position #1 with {\arrow{Stealth[length=5.5pt]}}}},
  midarr/.default=0.5,
  midarrr/.style={postaction={decorate},decoration={markings,
    mark=at position #1 with {\arrowreversed{Stealth[length=5.5pt]}}}},
  midarrr/.default=0.5]
\begin{scope}[shift={(-1.76,2.70)},scale=0.45]
  \braid{\tilecr}{}{midarr}
  \node at (3.9,-3.35) {$T(2,n)$};
\end{scope}
\draw[->] (-1.45,0.60) -- node[sloped,above,inner sep=2pt] {\small $0$-resolution} (-3.45,-0.30);
\begin{scope}[shift={(-5.90,-2.25)},scale=0.38]
  \braid{\tilezero}{scale=0.9}{midarr}
  \node at (3.9,-3.45) {$T(2,n-1)$};
\end{scope}
\draw[->] (1.45,0.60) -- node[sloped,above,inner sep=2pt] {\small $1$-resolution} (3.45,-0.30);
\begin{scope}[shift={(1.90,-2.25)},scale=0.38]
  \braid{\tileone}{scale=0.9}{midarrr}
  \node at (3.9,-3.45) {$\simeq U$};
\end{scope}
\end{tikzpicture}
\caption{Resolving the circled crossing of the standard diagram of $T(2,n)$, as in
\Cref{cor:skeinLmn}. The $0$-resolution is the standard diagram of $T(2,n-1)$; the
$1$-resolution is a diagram of the unknot $U$, along which the two strands of the twist
region run in opposite directions.}
\label{fig:skeinLmn}
\end{figure}

\subsection{The connecting map vanishes on cohomology}

\begin{lemma}\label{lem:deltastar}
Let $m,n\ge2$ and let $D$ be as in \Cref{cor:skeinLmn}. Then the map $\delta^{*}$ induced
on integral cohomology by the connecting map $\delta$ of \eqref{eq:skeinLmn} is zero.
\end{lemma}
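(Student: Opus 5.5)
The plan is to deduce $\delta^{*}=0$ from a rank count over $\Q$ in the long exact cohomology sequence of \eqref{eq:skeinLmn}, using only the ranks computed in \Cref{sec:window} and no information about $\delta$ itself. All three spectra in \eqref{eq:skeinLmn} have free integral cohomology. For $\Xo(L_{m,n})$ and $\Xo(L_{m,n-1})$ this is \Cref{cor:kunneth-torus}, which covers $n-1\ge1$. For the source, \Cref{lem:deloop} and \Cref{lem:factorspectra} show it is a wedge of spheres. Since $\delta^{*}$ is a homomorphism between finitely generated free abelian groups, it is zero as soon as $\delta^{*}\otimes\Q$ is zero. So it suffices to work rationally.

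Write $A_q=\Sigma^{n-1}\Xo^{\,q-3n+1}(T(2,m)\sqcup U)$, $B_q=\Xo^{\,q-1}(L_{m,n-1})$ and $C_q=\Xo^{\,q}(L_{m,n})$. The cofibration $A_q\to B_q\to C_q$ gives a long exact sequence
\[
\cdots\to \widetilde H^{i}(C_q;\Q)\to\widetilde H^{i}(B_q;\Q)\xrightarrow{\delta^{*}}\widetilde H^{i}(A_q;\Q)\to\widetilde H^{i+1}(C_q;\Q)\to\cdots .
\]
The only maps here that can fail to be injective or surjective are the copies of $\delta^{*}$ in the various degrees. Exactness then gives
\[
\sum_i\dim\widetilde H^{i}(C_q;\Q)=\sum_i\dim\widetilde H^{i}(A_q;\Q)+\sum_i\dim\widetilde H^{i}(B_q;\Q)-2\sum_i\rk\bigl(\delta^{*}\otimes\Q\bigr)^{i}.
\]
Hence for each $q$ we have $\dim H^{*}(C_q)\le\dim H^{*}(A_q)+\dim H^{*}(B_q)$, with equality exactly when $\delta^{*}\otimes\Q$ vanishes in every degree.

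Next I would sum this inequality over all $q$; only finitely many terms are nonzero. The total ranks are as follows.
\begin{itemize}[leftmargin=*]
\item By \Cref{lem:torusknot} and \Cref{cor:kunneth-torus}, $\Kho(L_{m,n})$ has total rank $2m\cdot2n=4mn$, and $\Kho(L_{m,n-1})$ has total rank $4m(n-1)$.
\item By \Cref{lem:deloop}, each $q$ contributes $\Xo^{\,q-3n}(T(2,m))\vee\Xo^{\,q-3n+2}(T(2,m))$ to the source. Summing over $q$, the source has total rank $2\cdot 2m=4m$; the suspension $\Sigma^{n-1}$ does not change ranks.
\end{itemize}
Since $4m(n-1)+4m=4mn$, the summed inequality is an equality. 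Each individual inequality therefore holds with equality, so $\delta^{*}\otimes\Q=0$ for every $q$, and hence $\delta^{*}=0$ integrally.

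The step that needs care is bookkeeping. First, the reindexings $q\mapsto q-1$ and $q\mapsto q-3n+1$ must be bijections of the quantum gradings, so that summing over all $q$ really produces the total ranks above; they are shifts, so this holds. Second, the middle identification $\Xo^{\,q-3n+1}(T(2,m)\sqcup U)\simeq\Xo^{\,q-3n}(T(2,m))\vee\Xo^{\,q-3n+2}(T(2,m))$ must be counted as contributing the full rank $2m$ twice over all $q$. No finer, grading-by-grading computation of the window ranks should be needed.
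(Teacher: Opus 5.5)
Your proof is correct, but it takes a different route from the paper.

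The paper first uses the chain-level disjoint-union isomorphism \eqref{eq:kunneth-chain}, together with flatness, to write the skein exact sequence of $L_{m,n}$ as $\Kho(T(2,m);\Z)\otimes(\text{the skein sequence of }T(2,n))$. This gives $\delta^{*}=\mathrm{id}\otimes\delta^{*}_{T(2,n)}$. It then checks $T(2,n)$ bidegree by bidegree. The target $\Kho(U;\Z)$ lives in only two bidegrees. In the first, the source $\Kho^{\,n-1,3n-1}(T(2,n-1))$ vanishes by \Cref{lem:torusknot}. In the second, the next map $\Kho^{\,0,-1}(U)\to\Kho^{\,n,3n-2}(T(2,n))$ is a surjection $\Z\to\Z$, hence injective, so exactness gives $\delta^{*}=0$.

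You instead sum the rational long exact sequences over all quantum gradings. The total ranks add exactly, $4mn=4m(n-1)+4m$, which forces every $\delta^{*}\otimes\Q$ to vanish. Freeness of $\widetilde H^{*}(A;\Z)$ then gives the integral statement.

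What your route buys:
\begin{itemize}
\item It is shorter.
\item It needs neither the compatibility of the K\"unneth isomorphism with the skein short exact sequence nor the exact bigrading shifts of \eqref{eq:skeinLmn}, since any shifts are bijections on gradings.
\item It carries over unchanged to the mirror family of \Cref{thm:mirror}, where the three total ranks are the same.
\item More generally it applies whenever total ranks are additive, for instance at a quasi-alternating crossing of a quasi-alternating factor, via additivity of determinants.
\end{itemize}

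What the paper's route buys: it is local. It locates the only bidegrees where $\delta^{*}$ could be nonzero and shows directly why it is not there, without relying on a global coincidence of total ranks.

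One small point: your sentence that only the copies of $\delta^{*}$ can fail to be injective or surjective is not accurate as stated. For example, $\widetilde H^{i}(C_q)\to\widetilde H^{i}(B_q)$ is surjective exactly when $\delta^{*}$ vanishes in degree $i$. You do not need that sentence, though: your displayed rank identity follows from exactness alone.
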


\begin{proof}
By \eqref{eq:kunneth-chain} the odd Khovanov complex of a split union is the tensor
product of the two complexes, so the short exact sequence \eqref{eq:skein-ses} for $D$ is
obtained from the corresponding sequence for $T(2,n)$ alone by
applying $\displaystyle{\mathit{CKh}_{o}\bigl(T(2,m)\bigr)\otimes_{\Z}-}$. That complex is degreewise
free, hence flat, so the result is again exact; and its cohomology
$\Kho\bigl(T(2,m);\Z\bigr)$ is free by \Cref{lem:torusknot}, so the K\"unneth theorem
gives $ H^{*}\bigl(\mathit{CKh}_{o}(T(2,m))\otimes_{\Z}-\bigr)\cong
\Kho\bigl(T(2,m);\Z\bigr)\otimes_{\Z}H^{*}(-)$, naturally. The skein exact sequence of
\eqref{eq:skeinLmn} for $D$ is therefore $\Kho(T(2,m);\Z)\otimes(\text{the skein exact sequence
for }T(2,n))$, and in particular $\delta^{*}=\mathrm{id}\otimes\delta^{*}_{T(2,n)}$. It suffices
to treat $T(2,n)$ itself, for which $D_{0}=T(2,n-1)$, $D_{1}=U$ and $c=n-1$.
On cohomology \Cref{lem:skein} induces, writing $T_{k}:=T(2,k)$,
\begin{equation}\label{eq:deltaLES}
\cdots\to\Kho^{\,i,j}(T_{n})\to\Kho^{\,i,j-1}(T_{n-1})
\xrightarrow{\ \delta^{*}\ }\Kho^{\,i-n+1,\,j-3n+1}(U)
\xrightarrow{\ \alpha\ }\Kho^{\,i+1,j}(T_{n})\to\cdots
\end{equation}

Since $\Kho^{\,p,q}(U;\Z)=\Z$ for
$(p,q)=(0,1)$ and $(0,-1)$ and vanishes otherwise, the target of $\delta^{*}$ is non-zero
only when $i-n+1=0$ and $j-3n+1=\pm1$, that is, only for
\[
(i,j)=(n-1,\,3n)\qquad\text{and}\qquad(i,j)=(n-1,\,3n-2).
\]
In every other bidegree $\delta^{*}=0$ because its target vanishes.

\emph{Case 1: the bidegree $(n-1,3n)$.} Here the source of $\delta^{*}$ is
$\Kho^{\,n-1,\,3n-1}\bigl(T(2,n-1)\bigr)$. By \Cref{lem:torusknot},
$\Kho^{\,i,q}\bigl(T(2,k)\bigr)$ is non-zero only for $i\in S_{k}$ and
$q\in\{k+2i-2,\;k+2i\}$; with $k=i=n-1$ this reads $q\in\{3n-5,\,3n-3\}$, so the source
vanishes and $\delta^{*}=0$.

\emph{Case 2: the bidegree $(n-1,3n-2)$.} Here the source need not vanish, so we use the
next map in \eqref{eq:deltaLES} instead. Substituting $i=n-1$ and $j=3n-2$, the sequence
continues
\[
\Kho^{\,0,-1}(U)\ \xrightarrow{\ \alpha\ }\ \Kho^{\,n,\,3n-2}\bigl(T(2,n)\bigr)\
\longrightarrow\ \Kho^{\,n,\,3n-3}\bigl(T(2,n-1)\bigr).
\]
The right-hand group vanishes because $n\notin S_{n-1}$, and the middle group is $\Z$
because $n\in S_{n}$ and $3n-2=n+2n-2$. By exactness $\alpha$ is a surjection
$\Z\twoheadrightarrow\Z$, hence an isomorphism, and in particular injective. Exactness at
$\Kho^{\,0,-1}(U)$ then gives $\operatorname{im}\delta^{*}=\ker\alpha=0$, so $\delta^{*}=0$
here as well.
\end{proof}
\begin{lemma}\label{lem:source}
Let $m,n\ge2$, and let
\[
A:=\Sigma^{n-1}\Xo^{\,q-3n+1}\bigl(T(2,m)\sqcup U\bigr),\qquad
T:=\Xo^{\,q-1}(L_{m,n-1})
\]
be the two left-hand terms of \eqref{eq:skeinLmn}. Then:
\begin{enumerate}[label=\textup{(\roman*)}]
\item $\widetilde H^{\,*}(T;\Z)$ is free and concentrated in degrees $d,d+1,d+2$; and
\item $A$ is a wedge of spheres,
\[
A\ \simeq\ (\Sm^{d-1})^{\wed\alpha}\wed(\Sm^{d})^{\wed\beta}\wed(\Sm^{d+1})^{\wed\gamma},
\]
with
\[
\alpha=[\,d-n\in S_m\,],\qquad \beta=2\,[\,d-n+1\in S_m\,],\qquad \gamma=[\,d-n+2\in S_m\,].
\]
\end{enumerate}
\end{lemma}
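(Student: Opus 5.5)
Part (i) is a reindexing of \Cref{lem:window}. Apply that lemma to $L_{m,n-1}$ in quantum grading $q-1$; this is legitimate since $n-1\ge1$ and the parity condition is preserved, because $q-1\equiv m+(n-1)$. The window index is
\[
\frac{(q-1)-m-(n-1)}{2}=\frac{q-m-n}{2}=d,
\]
so $\widetilde H^{*}(T;\Z)$ is free and concentrated in degrees $d,d+1,d+2$. That is (i).

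For (ii), start from the splitting recorded in \Cref{cor:skeinLmn}, which comes from \Cref{lem:deloop}:
\[
A\simeq\Sigma^{n-1}\Xo^{\,q-3n}\bigl(T(2,m)\bigr)\wed\Sigma^{n-1}\Xo^{\,q-3n+2}\bigl(T(2,m)\bigr).
\]
Each summand is computed by \Cref{lem:factorspectra} with $k=m$. For $Q=q-3n$, the parameter is
\[
t=\frac{q-3n-m}{2}=d-n,
\]
so the first summand is $(\Sm^{d-n})^{\wed[d-n\in S_m]}\wed(\Sm^{d-n+1})^{\wed[d-n+1\in S_m]}$. For $Q=q-3n+2$, the parameter is $t=d-n+1$, so the second summand is $(\Sm^{d-n+1})^{\wed[d-n+1\in S_m]}\wed(\Sm^{d-n+2})^{\wed[d-n+2\in S_m]}$.

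Suspending both summands by $n-1$ shifts the degrees to $d-1,d,d+1$. The two middle spheres combine, which gives $\alpha=[d-n\in S_m]$, $\beta=2[d-n+1\in S_m]$ and $\gamma=[d-n+2\in S_m]$. Suspension preserves wedges of spheres, so $A$ is a wedge of spheres as claimed.

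Parity is automatic: $q\equiv m+n$ implies $q-3n\equiv m$. The only step needing care is the index bookkeeping, namely checking that the shift $\Sigma^{n-1}$ and the grading shifts in \eqref{eq:skeinLmn} produce exactly the degrees $d-1,d,d+1$ rather than something off by one. I would verify this directly as above and, as a sanity check, compare the ranks against the cohomological skein sequence \eqref{eq:deltaLES} tensored with $\Kho(T(2,m))$.
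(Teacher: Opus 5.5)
Your proposal is correct and follows the paper's proof essentially step for step. Part (i) is \Cref{lem:window} applied to $L_{m,n-1}$ in grading $q-1$, which has the same window index $d$. Part (ii) combines the delooping of \Cref{lem:deloop} with \Cref{lem:factorspectra} (your parameters $t=d-n$ and $t=d-n+1$ for the two summands) and the shift by $\Sigma^{n-1}$, exactly as in the paper's count of the dimensions $d-1,d,d+1$.
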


\begin{proof}
(i) The window index of $L_{m,n-1}$ at quantum grading $q-1$ is
\[
\frac{(q-1)-m-(n-1)}{2}\ =\ \frac{q-m-n}{2}\ =\ d,
\]
the window index of $L_{m,n}$ at $q$. Now apply \Cref{lem:window} to $L_{m,n-1}$ at $q-1$.

(ii) By \eqref{eq:deloop}, applied in quantum grading $q-3n+1$,
\[
A\ =\ \Sigma^{n-1}\Bigl[\,\Xo^{\,q-3n+2}\bigl(T(2,m)\bigr)\ \wed\
\Xo^{\,q-3n}\bigl(T(2,m)\bigr)\Bigr].
\]
By \Cref{lem:factorspectra}, for every $Q$ the spectrum $\Xo^{\,Q}\bigl(T(2,m)\bigr)$ is a
wedge of spheres carrying one sphere $\Sm^{\,i}$ for each $i\in S_m$ with
$Q\in\{m+2i-2,\,m+2i\}$, and no other cells; so $A$ is a wedge of spheres. Writing
$q-m=2d+n$, the two conditions $Q=m+2i-2$ and $Q=m+2i$ read $i=\tfrac{Q-m+2}{2}$ and
$i=\tfrac{Q-m}{2}$, whence
\[
\Xo^{\,q-3n+2}\bigl(T(2,m)\bigr)\ \text{has spheres in dimensions } d-n+1,\ d-n+2,
\]
\[
\Xo^{\,q-3n}\bigl(T(2,m)\bigr)\ \text{has spheres in dimensions } d-n,\ d-n+1,
\]
each present precisely when the dimension in question lies in $S_m$. Applying
$\Sigma^{n-1}$ raises all four dimensions by $n-1$, giving $d,d+1$ from the first summand
and $d-1,d$ from the second. Dimension $d-1$ is therefore reached only by the second
summand and dimension $d+1$ only by the first, while dimension $d$ is reached once by each;
this is the stated count.
\end{proof}

\begin{remark}\label{rem:gammaq}
When does the top sphere of $A$ disappear? Since $S_m=\{0\}\cup\{2,\dots,m\}$, an integer
$t$ misses $S_m$ exactly when $t<0$, $t=1$ or $t>m$. Taking $t=d-n+2$ and using
$q=m+n+2d$, this reads
\[
\gamma=0\quad\Longleftrightarrow\quad q\le m+3n-6,\quad q=m+3n-2,\quad\text{or}\quad
q\ge 3m+3n-2 .
\]
So $\gamma=0$ when the degree $d-n+2$ falls below the support $S_m$ of $\Kho\bigl(T(2,m)\bigr)$,
on its gap at $1$, or above its top degree $m$.

For $A$ itself to vanish, none of $d-n$, $d-n+1$, $d-n+2$ may lie in $S_m$. Three
consecutive integers can all miss $S_m$ only if they are all negative or all larger than
$m$, because three consecutive integers containing $1$ also contain $0$ or $2$. Hence
\[
A\simeq\ast\quad\Longleftrightarrow\quad q\le m+3n-6\ \ \text{or}\ \ q\ge 3m+3n+2 .
\]
The second case plays no role, since $\Xo^{q}(L_{m,n})\simeq\ast$ already for $q>3(m+n)$
(\Cref{rem:fivegradings}).
\end{remark}

\subsection{The connecting map is null or Hopf}\label{ss:dichotomy}

We show that the connecting map $\delta$ of \eqref{eq:skeinLmn} is null-homotopic except
possibly on the top sphere $\Sm^{d+1}$ of $A$, where it factors through the summands
$\Sm^{d}$ of $T$, each component being $0$ or the Hopf map $\eta$. Hence a skein step creates at most
one further copy of $\Sigma^{d-2}\CP^{2}$, and none if $\gamma=0$. The argument is the
inductive step in the proof of \Cref{prop:classification} applied to $\delta$; the decisive
input is again \Cref{lem:etacofiber}.

\begin{lemma}\label{lem:hurewicz}
Let $r\ge2$, let $T$ be an $(r-1)$-connected CW complex with $H_{r}(T;\Z)$ free, let
$A=\bigvee_{i\in I}\Sm^{\,d_{i}}$ be a finite wedge of spheres with $d_{i}\le r$ for every
$i$, and let $\delta\colon A\to T$ be a map with $\delta^{*}=0$ on $H^{r}(-;\Z)$. Then
$\delta\simeq0$.
\end{lemma}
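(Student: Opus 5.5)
A map out of a wedge is null exactly when each of its restrictions to the wedge summands is null, so I will reduce to a single sphere $\Sm^{d_i}\to T$ and argue one summand at a time. The homotopy of $\delta$ is the wedge of homotopies of the restrictions $\delta_i=\delta\circ\mathrm{in}_i$. For the hypothesis, note that $\widetilde H^{r}(A;\Z)$ splits as the direct sum of $\widetilde H^{r}$ of the summands via the $\mathrm{in}_i^*$. Hence $\delta^*=0$ on $H^r$ implies $\delta_i^*=\mathrm{in}_i^*\delta^*=0$ on $H^r$ for each $i$.

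\emph{Summands with $d_i<r$.} Here $\delta_i$ represents an element of $\pi_{d_i}(T)$, which vanishes because $T$ is $(r-1)$-connected. (If some $d_i=0$ we read this as a pointed map $\Sm^0\to T$; it is null because $T$ is connected.) So $\delta_i\simeq0$ with no further input.

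\emph{Summands with $d_i=r$.} Here $\delta_i$ represents a class in $\pi_r(T)$. Since $r\ge2$ and $T$ is $(r-1)$-connected, the Hurewicz theorem gives an isomorphism $h\colon\pi_r(T)\to H_r(T;\Z)$. It therefore suffices to show $(\delta_i)_*[\Sm^r]=0$ in $H_r(T;\Z)$.

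To get this from the cohomological hypothesis, use the universal coefficient theorem. As $T$ is $(r-1)$-connected, $H_{r-1}(T;\Z)=0$, so
\[
H^r(T;\Z)\cong\Hom\bigl(H_r(T;\Z),\Z\bigr),
\]
naturally, and likewise for $\Sm^r$. Now $H_r(T;\Z)$ is free, and I assume it is finitely generated; in the application $T$ is a finite complex, and a general $T$ can be replaced by a finite subcomplex containing the compact image of $\delta_i$. A free, finitely generated group is detected by $\Hom(-,\Z)$: an element $y$ with $\phi(y)=0$ for every $\phi\in\Hom(H_r(T),\Z)$ is zero. Naturality gives, for each $\phi$,
\[
\phi\bigl((\delta_i)_*[\Sm^r]\bigr)=\bigl(\delta_i^*\phi\bigr)[\Sm^r]=0,
\]
because $\delta_i^*=0$ on $H^r$. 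Hence $(\delta_i)_*[\Sm^r]=0$, and by Hurewicz $\delta_i\simeq0$.

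Combining the two cases, every restriction $\delta_i$ is null, and so $\delta\simeq0$.

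The main point needing care is this duality step: it is where freeness of $H_r(T)$ is used, since torsion would be invisible to $\Hom(-,\Z)$. The finite-generation remark above covers the case where $H_r(T)$ might be an infinite free group, where one must make sure evaluation against all homomorphisms to $\Z$ still separates points.
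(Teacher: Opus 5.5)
Your proof is correct and follows the paper's argument: reduce to the wedge summands, use $(r-1)$-connectivity below degree $r$, and in degree $r$ combine the Hurewicz isomorphism with the universal coefficient theorem and the freeness of $H_r(T;\Z)$. The finite-generation caveat is unnecessary, because the coordinate projections of a free abelian group of any rank detect every nonzero element. This is just as well, since passing to a finite subcomplex would not in general preserve $(r-1)$-connectivity or the freeness of $H_r$.
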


\begin{proof}
A map out of a wedge is null-homotopic if and only if its restriction $\delta_{i}$ to each
summand $\Sm^{d_{i}}$ is. If $d_{i}<r$, then $\delta_{i}\simeq0$ because $T$ is
$(r-1)$-connected. If $d_{i}=r$, the Hurewicz map $\pi_{r}(T)\to H_{r}(T;\Z)$,
$[g]\mapsto g_{*}[\Sm^{r}]$, is an isomorphism \cite[Thm.~4.32]{Hatcher}, so it suffices to
show that $\delta_{*}=0$ on $H_{r}$. By the universal coefficient theorem every homomorphism
$H_{r}(T;\Z)\to\Z$ is evaluation on a class $c\in H^{r}(T;\Z)$, and it vanishes on the image
of $\delta_{*}$ because $\delta^{*}c=0$. Since $H_{r}(T;\Z)$ is free, this forces
$\delta_{*}=0$.
\end{proof}
\begin{proposition}\label{prop:dichotomy}
Let $m,n\ge2$, let $A$ and $T$ be as in
\Cref{lem:source}, and let $\delta\colon A\to T$ be the connecting map of
\eqref{eq:skeinLmn}. Write
\[
T\ \simeq\ (\Sigma^{d-2}\CP^{2})^{\wed\nu'}\wed(\Sm^{d})^{\wed a'}\wed
(\Sm^{d+1})^{\wed b'}\wed(\Sm^{d+2})^{\wed c'}
\]
as in \Cref{cor:classification}, so that by \Cref{lem:source}(ii)
\[
T\wed\Sigma A\ \simeq\ (\Sigma^{d-2}\CP^{2})^{\wed\nu'}\wed(\Sm^{d})^{\wed a'+\alpha}\wed
(\Sm^{d+1})^{\wed b'+\beta}\wed(\Sm^{d+2})^{\wed c'+\gamma}.
\]
\begin{enumerate}[label=\textup{(\roman*)}]
\item If $\delta\simeq0$, then $\Xo^{q}(L_{m,n})\simeq T\wed\Sigma A$.
\item If $\delta\not\simeq0$, then $\gamma=1$ and $a'\ge1$, and $\Xo^{q}(L_{m,n})$ is
obtained from $T\wed\Sigma A$ by replacing one summand $\Sm^{d}\wed\Sm^{d+2}$ with
$\Sigma^{d-2}\CP^{2}$:
\[
\Xo^{q}(L_{m,n})\ \simeq\ (\Sigma^{d-2}\CP^{2})^{\wed\nu'+1}\wed(\Sm^{d})^{\wed a'+\alpha-1}
\wed(\Sm^{d+1})^{\wed b'+\beta}\wed(\Sm^{d+2})^{\wed c'}.
\]
\end{enumerate}
Hence $\nu(m,n;q)$ is $\nu'$ or $\nu'+1$ according as $\delta$ is null or not, where
$\nu'=\nu(m,n-1;q-1)$ if $n\ge3$.
\end{proposition}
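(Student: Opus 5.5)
Rotating \eqref{eq:skeinLmn} identifies $\Xo^{q}(L_{m,n})$ with $\Cone(\delta)$ for $\delta\colon A\to T$. So the plan is to determine $\delta$ up to homotopy, summand by summand, and then compute its cofiber with \Cref{lem:etacofiber}. First suspend far enough that $d\ge3$ and everything is a simply connected finite complex; this changes no statement, since $\Cone$, wedges and $\Sq^{2}$ commute with suspension. Part (i) is then immediate, because the cone of a null map is $T\wed\Sigma A$. By \Cref{lem:source}(ii), $A$ is a wedge of spheres of dimensions $d-1$, $d$, $d+1$, and $\delta$ is null if and only if its restriction to each sphere is.

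\emph{Step 1: the low spheres.} Since $T$ is $(d-1)$-connected with $H_{d}(T;\Z)$ free (\Cref{lem:source}(i)), and $\delta^{*}=0$ on integral cohomology (\Cref{lem:deltastar}), \Cref{lem:hurewicz} with $r=d$ shows that $\delta$ restricted to $(\Sm^{d-1})^{\wed\alpha}\wed(\Sm^{d})^{\wed\beta}$ is null. Hence $\delta$ factors up to homotopy through the collapse $A\to(\Sm^{d+1})^{\wed\gamma}$.

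\emph{Step 2: the top sphere.} If $\gamma=0$ then $\delta\simeq0$, so $\delta\not\simeq0$ forces $\gamma=1$. The remaining component is a class $g\in\pi_{d+1}(T)$. Using the decomposition of $T$ and \eqref{eq:wedgesum} (valid since $d\ge3$), $g$ splits into components on the summands of $T$:
\begin{itemize}
\item on $\Sm^{d+2}$ it vanishes by connectivity;
\item on $\Sigma^{d-2}\CP^{2}$ it vanishes by \Cref{lem:cp2pi};
\item on $\Sm^{d+1}$ it is a degree, which is zero because $\delta^{*}=0$ and the Hurewicz map is an isomorphism there.
\end{itemize}
Thus $g$ factors through $(\Sm^{d})^{\wed a'}$. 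If $a'=0$ then $\delta\simeq0$, so $\delta\not\simeq0$ also forces $a'\ge1$.

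\emph{Step 3: the cofiber.} Write $\delta=\iota\circ g'\circ p$, where $p\colon A\to\Sm^{d+1}$ collapses the other spheres, $g'\colon\Sm^{d+1}\to(\Sm^{d})^{\wed a'}$, and $\iota$ includes this wedge into $T$. Then
\[
\Cone(\delta)\simeq(\text{other summands of }T)\wed\Cone(g')\wed\Sigma(\text{collapsed spheres}).
\]
This is the wedge decomposition used in the inductive step of \Cref{prop:classification}: since $\delta$ kills the other spheres of $A$, they pass to $\Sigma A$. By \Cref{lem:etacofiber}, $\Cone(g')\simeq\Sigma^{d-2}\CP^{2}\wed(\Sm^{d})^{\wed a'-1}$ when $g'\not\simeq0$. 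Counting summands gives the displayed formula in (ii).

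\emph{Conclusion and main obstacle.} The final sentence follows from uniqueness of $\nu$ in \Cref{cor:classification}, together with \Cref{def:nu} applied to $T=\Xo^{q-1}(L_{m,n-1})$ when $n-1\ge2$. The main obstacle is the naturality bookkeeping in Step 3: making sure that the homotopy used to factor $\delta$ through a single wedge summand really yields a wedge splitting of the cone. I would handle this by first replacing $\delta$ by a homotopic map that is literally constant on the low spheres and lands in the $d$-sphere summands, using that homotopic maps have equivalent cones.
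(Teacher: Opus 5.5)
Your proposal is correct and follows the paper's proof essentially step for step. The shared steps are: suspend to $d\ge3$; use \Cref{lem:hurewicz} to make $\delta$ null on $(\Sm^{d-1})^{\wed\alpha}\wed(\Sm^{d})^{\wed\beta}$; split the remaining class in $\pi_{d+1}(T)$ over the summands of $T$ via \eqref{eq:wedgesum}, \Cref{lem:cp2pi} and $\delta^{*}=0$; and conclude with \Cref{lem:etacofiber} and the uniqueness clause of \Cref{cor:classification}. Two cosmetic points: no rotation is needed, since $A\to T\to\Xo^{q}(L_{m,n})$ is already the cofiber sequence, and the Hurewicz isomorphism in your Step~2 is the one for the $\Sm^{d+1}$ summands, not for all of $\pi_{d+1}(T)\cong(\Z/2)^{a'}\oplus\Z^{b'}$.
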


\begin{proof}
Part (i) is clear, since the cofiber of a null map $A\to T$ is $T\wed\Sigma A$. For (ii),
suspend \eqref{eq:skeinLmn} so that $d\ge3$, as in the proof of \Cref{cor:classification}.
By \Cref{lem:source}(i) and \Cref{lem:freeduality}, $T$ has free homology in degrees $d,d+1,d+2$, so it is
$(d-1)$-connected, and $\delta^{*}=0$ by \Cref{lem:deltastar}.

Write $A=A'\wed(\Sm^{d+1})^{\wed\gamma}$ with $A'=(\Sm^{d-1})^{\wed\alpha}\wed(\Sm^{d})^{\wed\beta}$.
Since $A'$ has dimension at most $d$, \Cref{lem:hurewicz} gives $\delta|_{A'}\simeq0$. Since
$\gamma\le1$ by \Cref{lem:source}(ii), $\delta\not\simeq0$ forces $\gamma=1$, and $\delta$ is determined by its restriction
$\delta_{1}\in\pi_{d+1}(T)$ to the summand $\Sm^{d+1}$. As in the proof of
\Cref{prop:classification}, only the summands $\Sm^{d}$ and $\Sm^{d+1}$ of $T$ contribute to
$\pi_{d+1}(T)\cong(\Z/2)^{a'}\oplus\Z^{b'}$. The $\Z^{b'}$-component of $\delta_{1}$ consists of
degrees, read off by $\delta^{*}$ in degree $d+1$, so it vanishes by
\Cref{lem:deltastar,lem:hurewicz}. Hence $\delta_{1}$ is a
non-zero map $\bar\delta_{1}\colon\Sm^{d+1}\to(\Sm^{d})^{\wed a'}$ into one summand of $T$; in
particular $a'\ge1$, and
\[
\Xo^{q}(L_{m,n})\ \simeq\ \Cone(\bar\delta_{1})\wed(\Sigma^{d-2}\CP^{2})^{\wed\nu'}\wed
(\Sm^{d+1})^{\wed b'}\wed(\Sm^{d+2})^{\wed c'}\wed\Sigma A' .
\]
By \Cref{lem:etacofiber}, $\Cone(\bar\delta_{1})\simeq\Sigma^{d-2}\CP^{2}\wed(\Sm^{d})^{\wed a'-1}$,
which gives (ii). The count of summands $\Sigma^{d-2}\CP^{2}$ is the uniqueness clause of
\Cref{cor:classification}.
\end{proof}

For $\gamma=0$ this gives the following.

\begin{corollary}\label{cor:deltanull}
Let $m,n\ge2$, and let $A$ and $T$ be as in \Cref{lem:source}. If $\gamma=0$, that is, if
$d-n+2\notin S_{m}$ (by \Cref{rem:gammaq}, if $d<n-2$, $d=n-1$ or $d>m+n-2$), then the
connecting map $\delta$ of \eqref{eq:skeinLmn} is null-homotopic and
\[
\Xo^{\,q}(L_{m,n})\ \simeq\ \Xo^{\,q-1}(L_{m,n-1})\ \wed\ \Sigma A .
\]
If moreover $n\ge3$, then $\nu(m,n;q)=\nu(m,n-1;q-1)$.
\end{corollary}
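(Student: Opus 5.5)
This is the contrapositive of \Cref{prop:dichotomy}(ii), so the plan is to check the hypotheses and then read off the three conclusions in turn.

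First, \Cref{prop:dichotomy}(ii) says that $\delta\not\simeq0$ forces $\gamma=1$. So under the hypothesis $\gamma=0$ we get $\delta\simeq0$. One can also see this directly. When $\gamma=0$, \Cref{lem:source}(ii) shows that $A$ is a wedge of spheres of dimension at most $d$. After suspending so that $d\ge3$, \Cref{lem:source}(i) and \Cref{lem:freeduality} make $T$ $(d-1)$-connected with free $H_d$. By \Cref{lem:deltastar}, $\delta^*=0$. Then \Cref{lem:hurewicz} with $r=d$ gives $\delta\simeq0$.

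Second, the cofiber sequence \eqref{eq:skeinLmn} identifies $\Xo^q(L_{m,n})$ with the cofiber of $\delta$. Since $\delta$ is null, that cofiber is $T\wed\Sigma A=\Xo^{q-1}(L_{m,n-1})\wed\Sigma A$, which is part (i) of \Cref{prop:dichotomy}.

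The parenthetical translation of $\gamma=0$ is just the definition $\gamma=[\,d-n+2\in S_m\,]$. The integer $d-n+2$ misses $S_m$ exactly when it is negative, equal to $1$, or larger than $m$. These three cases are $d<n-2$, $d=n-1$ and $d>m+n-2$, consistent with \Cref{rem:gammaq}.

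For the last sentence, assume $n\ge3$, so that $L_{m,n-1}$ is in the range of \Cref{def:nu}. By \Cref{lem:source}(i), the window index of $L_{m,n-1}$ at $q-1$ is again $d$, so $T$ decomposes as in \Cref{cor:classification} with $\nu'=\nu(m,n-1;q-1)$ copies of $\Sigma^{d-2}\CP^2$. Wedging with the sphere wedge $\Sigma A$ adds no $\CP^2$ summands. By the uniqueness clause of \Cref{cor:classification}, or by additivity of the rank of $\Sq^2$ over wedges, $\nu(m,n;q)=\nu'$.

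The only delicate point is keeping the degree bookkeeping consistent under suspension, which was already handled in \Cref{prop:dichotomy}. I expect no real obstacle.
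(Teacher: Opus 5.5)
Your proposal is correct and matches the paper, which states this corollary without separate proof as the case $\gamma=0$ of \Cref{prop:dichotomy}. There, $\gamma=0$ rules out case (ii), so $\delta\simeq0$; case (i) gives the splitting; and the closing clause gives $\nu(m,n;q)=\nu(m,n-1;q-1)$ for $n\ge3$. Your direct check via \Cref{lem:hurewicz} is the argument the paper itself uses for the summand $A'$ inside the proof of \Cref{prop:dichotomy}.
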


\begin{remark}\label{rem:residual}
The map $\bar\delta_{1}$ in the proof of \Cref{prop:dichotomy} is invisible to
\Cref{lem:deltastar}, since $\eta$ induces zero on integral cohomology. By
\Cref{lem:etacofiber} it is detected by $\Sq^{2}$ on $\Cone(\bar\delta_{1})$, so deciding whether
$\bar\delta_{1}\simeq0$ is equivalent to computing $\nu$. The arguments here do not do this,
which is why the skein reduction stops at the terminal gradings $\mathcal C(m,n)$ of \Cref{cor:trichotomy}.
\end{remark}
\section{Skein reduction: proof of Theorem B}\label{sec:reduction}

Throughout this section we assume that $q$ is not one of the five gradings of \Cref{thm:vanishing} and that $\Kho^{*,q}(L_{m,n};\Z)\ne0$, that is, $-2\le d\le m+n$; equivalently
\begin{equation}\label{eq:room}
0\le d\le m+n-2\quad\text{and}\quad d\neq1 .\tag{R}
\end{equation}

\begin{definition}\label{def:rho}
For $d\ge0$ with $d\ne1$ define $\rho_d\colon\Z_{\ge2}\to\Z_{\ge2}$ by
\[
\rho_d(k)=\begin{cases}k,& k\le d,\\ d,& k=d+1,\\ d+2,& k\ge d+2.\end{cases}
\]
\end{definition}

\begin{lemma}\label{lem:rho-vs-MN}
$M(q)=\rho_d(m)$ and $N(q)=\rho_d(n)$.
\end{lemma}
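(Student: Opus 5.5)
This is a direct case comparison. We rewrite the thresholds in the definition of $M(q)$ in terms of $d$ and check that the three cases match the three cases of $\rho_d(m)$. The statement for $N$ follows from the same computation with $m$ and $n$ interchanged, since $N$ is obtained from $M$ by exactly that symmetry.

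Substitute $q=m+n+2d$. Then:
\begin{itemize}
\item the condition $q\ge 3m+n$ becomes $2d\ge 2m$, that is, $m\le d$;
\item the condition $q=3m+n-2$ becomes $2d=2m-2$, that is, $m=d+1$;
\item the condition $q\le 3m+n-4$ becomes $2d\le 2m-4$, that is, $m\ge d+2$.
\end{itemize}
By \Cref{rem:parity} we work in the parity $q\equiv m+n\pmod 2$, so $d$ is an integer. Hence these three conditions exhaust all possibilities and are mutually exclusive; in particular no grading falls into a gap such as $q=3m+n-3$.

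Now compare values case by case:
\begin{itemize}
\item If $m\le d$, then $M(q)=m=\rho_d(m)$.
\item If $m=d+1$, then $M(q)=d=\rho_d(m)$.
\item If $m\ge d+2$, then $M(q)=d+2=\rho_d(m)$.
\end{itemize}
In the third case the value $d+2$ agrees whether we read $d$ as $(q-m-n)/2$ or as the index in $\rho_d$, because these are the same $d$.

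Two points need checking so that $\rho_d(m)$ is defined.
\begin{itemize}
\item The argument $m$ must lie in $\Z_{\ge2}$, the domain of $\rho_d$. This holds since $m\ge2$ in Theorem~B.
\item The hypothesis $d\ge0$, $d\ne1$ of \Cref{def:rho} is exactly condition \eqref{eq:room}, which is in force throughout the section.
\end{itemize}
The case $m=d+1$ with $d=0$ cannot occur, because $m\ge2$. So whenever $\rho_d(m)=d$ arises from the middle case, $d=m-1\ge1$, and together with $d\ne1$ this gives $d\ge2$. Thus the value lands in $\Z_{\ge2}$, as the stated codomain requires.

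None of these steps is a real obstacle. The only care needed is the parity remark, which excludes intermediate values of $q$, and the matching of the domain and codomain conventions just checked.
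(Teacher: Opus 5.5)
Your proposal is correct and follows the paper's proof: substituting $q=m+n+2d$ turns the three thresholds in the definition of $M(q)$ into $m\le d$, $m=d+1$ and $m\ge d+2$, which are exactly the cases of $\rho_d(m)$, and the case of $N$ follows by symmetry. Your additional checks on parity and on the domain and codomain of $\rho_d$ under \eqref{eq:room} are sound; the paper leaves them implicit.
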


\begin{proof}
Immediate from $m\le d\iff q\ge3m+n$, $m=d+1\iff q=3m+n-2$ and $m\ge d+2\iff q\le3m+n-4$.
\end{proof}

\begingroup
\renewcommand{\thetheorem}{B}\renewcommand{\theHtheorem}{thmB}
\begin{theorem}\label{thm:reduction}
Assume \eqref{eq:room}. Then
\[
\Xo^{q}\bigl(L_{m,n}\bigr)\ \simeq\ \Xo^{\,q^{*}}\bigl(L_{M(q),N(q)}\bigr)\ \wed\ W
\]
with $W$ a wedge of spheres, where $q^{*}=M(q)+N(q)+2d$. In particular
$\nu(m,n;q)=\nu\bigl(M(q),N(q);q^{*}\bigr)$.
\end{theorem}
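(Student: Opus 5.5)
The plan is to apply \Cref{cor:deltanull} repeatedly, first to the $T(2,n)$ factor and then, by symmetry, to the $T(2,m)$ factor. Each application is a skein step in which $\gamma=0$. Such a step lowers one index by one and the quantum grading by one, keeps the window index $d$ fixed, and splits off the wedge of spheres $\Sigma A$ (\Cref{lem:source}(ii)). I will check that these steps carry $n$ exactly to $\rho_d(n)$ and $m$ exactly to $\rho_d(m)$. \Cref{lem:rho-vs-MN} then identifies the endpoint as $L_{M(q),N(q)}$ in grading $q^*=M(q)+N(q)+2d$.

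\emph{Step 1 (reducing $n$).} Fix $d$ satisfying \eqref{eq:room}. By \Cref{rem:gammaq}, $\gamma=0$ whenever $d<n-2$ or $d=n-1$.
\begin{itemize}
\item If $n\ge d+3$, then $d<n-2$. \Cref{cor:deltanull} gives $\Xo^{q}(L_{m,n})\simeq\Xo^{q-1}(L_{m,n-1})\wed\Sigma A$. Since $n-1\ge d+2\ge2$, the new link is again in the family. By \Cref{lem:source}(i) its window index at $q-1$ is still $d$, and \eqref{eq:room} persists because $d\le m+(n-1)-2$. Iterating stops at $n=d+2=\rho_d(n)$.
\item If $n=d+1$, then $d=n-1$, so $\gamma=0$ and one step gives $L_{m,d}$. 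Here $d=n-1\ge1$ with $d\ne1$, so $d\ge2$ and $L_{m,d}$ lies in the family. Condition \eqref{eq:room} holds since $d\le m+d-2$ for $m\ge2$.
\item If $n\le d$, no step is taken, and indeed $\rho_d(n)=n$.
\end{itemize}
In all cases we reach $\Xo^{q'}(L_{m,\rho_d(n)})\wed W_1$ with $q'=m+\rho_d(n)+2d$ and $W_1$ a finite wedge of spheres, since a wedge of wedges of spheres is a wedge of spheres.

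\emph{Step 2 (reducing $m$).} The links $L_{m,n}$ and $L_{n,m}$ are isotopic, and the spectra are link invariants. So I swap the factors and rerun Step 1 on $m$; the hypotheses of \Cref{cor:deltanull} need $\rho_d(n)\ge2$, which holds. This yields $\Xo^{q}(L_{m,n})\simeq\Xo^{q^*}(L_{\rho_d(m),\rho_d(n)})\wed W$. The equality $\nu(m,n;q)=\nu(M(q),N(q);q^*)$ follows because wedging with spheres does not change the rank of $\Sq^2$ from degree $d$ to $d+2$. Equivalently, it follows from the uniqueness clause of \Cref{cor:classification}, since both spectra have free homology in degrees $d,d+1,d+2$.

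\emph{Main obstacle.} The bookkeeping at the boundary cases is the delicate part. I must make sure each intermediate link still has both indices at least $2$, so that \Cref{cor:deltanull} and \Cref{lem:source} apply. I must also confirm that the case $n=d+1$ really falls under $\gamma=0$ (that is, $d-n+2=1\notin S_m$) rather than under the Hopf case of \Cref{prop:dichotomy}. Finally, the iteration must terminate exactly at $\rho_d$, since at $n=d+2$ we have $d-n+2=0\in S_m$ and $\gamma$ may be $1$. I would verify each of these directly from \Cref{rem:gammaq} and \Cref{def:rho}.
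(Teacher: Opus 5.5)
Your proof is correct and is essentially the paper's argument. You apply \Cref{cor:deltanull} repeatedly to lower $n$ to $\rho_d(n)$ and then, via $L_{m,n}=L_{n,m}$, to lower $m$ to $\rho_d(m)$; the split-off spheres collect into $W$, and $\nu$ is unchanged because $\Sq^2$ vanishes on $W$. Your case list omits the starting value $n=d+2$, which is harmless since then $\rho_d(n)=n$ and no step is taken.
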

\addtocounter{theorem}{-1}\endgroup

\begin{proof}
Put $M:=\rho_d(m)$ and $N:=\rho_d(n)$; by \Cref{lem:rho-vs-MN} these are $M(q)$ and $N(q)$.
Since $d\ne1$, the definition gives, for all $k\ge2$,
\[
2\le\rho_d(k)\le k,\qquad \rho_d(j)=\rho_d(k)\ \text{ if }\ \rho_d(k)\le j\le k .
\]
Also $M+N\ge d+2$: if $m,n\le d$ this is \eqref{eq:room}; otherwise one of $M,N$ is at
least $d$ and the other at least $2$. So every pair $(m',n')$ with $m'\ge M$ and $n'\ge N$
satisfies \eqref{eq:room}, with the same $d$.

For such a pair, since $d-n'+2\le m'$, the hypothesis $d-n'+2\notin S_{m'}$ of
\Cref{cor:deltanull} is equivalent to $n'=d+1$ or $n'\ge d+3$, that is, to
$\rho_d(n')<n'$. For $N<n'\le n$ we have $\rho_d(n')=N<n'$ and $n'\ge3$, so
\Cref{cor:deltanull} applies at each step from $n$ down to $N$; each step lowers $q$ by one
and splits off a wedge of spheres (\Cref{lem:source}). Hence
\[
\Xo^{q}(L_{m,n})\simeq\Xo^{\,q-(n-N)}(L_{m,N})\wed W_1 .
\]
Since $L_{m,N}=L_{N,m}$, the same argument lowers the first index from $m$ to $M$:
\[
\Xo^{\,q-(n-N)}(L_{m,N})\simeq\Xo^{\,q-(n-N)-(m-M)}(L_{M,N})\wed W_2 .
\]
The final grading is $M+N+2d=q^{*}$, and $W:=W_1\wed W_2$ is a wedge of spheres. Since
$\nu$ is the rank of $\Sq^{2}$ (\Cref{cor:classification}), which vanishes on $W$,
$\nu(m,n;q)=\nu(M,N;q^{*})$.
\end{proof}

\begin{proposition}\label{prop:fixedpoints}
Assume \eqref{eq:room}. Then $(M(q),N(q))=(m,n)$ if and only if $m,n\in\{2,\dots,d\}\cup\{d+2\}$; equivalently, if and only if
\[
\bigl(q\ge3m+n\ \text{ or }\ q=3m+n-4\bigr)\quad\text{and}\quad\bigl(q\ge m+3n\ \text{ or }\ q=m+3n-4\bigr).
\]
\end{proposition}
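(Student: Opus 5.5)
By \Cref{lem:rho-vs-MN}, $M(q)=\rho_d(m)$ and $N(q)=\rho_d(n)$. So the first equivalence amounts to finding the fixed points of $\rho_d$ on $\Z_{\ge2}$, applied to $m$ and $n$ separately. The second equivalence translates the resulting membership conditions into inequalities on $q$ via $q=m+n+2d$.

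I would read off the fixed points from \Cref{def:rho}, one branch at a time:
\begin{itemize}
\item if $k\le d$, then $\rho_d(k)=k$, so every $k\in\{2,\dots,d\}$ is fixed;
\item if $k=d+1$, then $\rho_d(k)=d\ne k$, so $k$ is not fixed;
\item if $k\ge d+2$, then $\rho_d(k)=d+2$, which equals $k$ exactly when $k=d+2$.
\end{itemize}
Hence $\rho_d(k)=k$ if and only if $k\in\{2,\dots,d\}\cup\{d+2\}$. For $d=0$ the first set is empty, so the only fixed point is $k=2$.

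Applying this to both indices, $(M(q),N(q))=(m,n)$ holds if and only if $m$ and $n$ both lie in $\{2,\dots,d\}\cup\{d+2\}$. Hypothesis \eqref{eq:room} is used only to guarantee that $d\ge0$ and $d\ne1$, so that $\rho_d$ is defined.

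For the second formulation, substitute $d=(q-m-n)/2$:
\begin{itemize}
\item $m\le d$ is equivalent to $2m\le q-m-n$, that is, $q\ge3m+n$;
\item $m=d+2$ is equivalent to $q=m+n+2(m-2)=3m+n-4$.
\end{itemize}
Since $m\ge2$ is already assumed, the condition $m\in\{2,\dots,d\}\cup\{d+2\}$ becomes "$q\ge3m+n$ or $q=3m+n-4$". Symmetrically, the condition on $n$ becomes "$q\ge m+3n$ or $q=m+3n-4$". These are the same translations used in the proof of \Cref{lem:rho-vs-MN}.

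There is no real obstacle here. The only point needing care is the case $d=0$: there $\{2,\dots,d\}$ is empty and the condition $q\ge3m+n$ would force $m\le0$, which is impossible. So both formulations reduce to $m=n=2$ in that case, and they stay consistent.
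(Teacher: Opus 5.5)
Your proposal is correct and follows essentially the same route as the paper: identify $(M(q),N(q))$ with $(\rho_d(m),\rho_d(n))$ via \Cref{lem:rho-vs-MN}, observe that $\rho_d(k)=k$ exactly when $k\le d$ or $k=d+2$, and translate these conditions into inequalities on $q$ using $q=m+n+2d$. Your extra check of the case $d=0$ is consistent with this and needs no further work.
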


\begin{proof}
$\rho_d(k)=k$ iff $k\le d$ or $k=d+2$; translate using \Cref{lem:rho-vs-MN}.
\end{proof}

Theorems A and B, together with \Cref{prop:fixedpoints}, sort the quantum gradings in
which $\Xo^{q}(L_{m,n})$ is not contractible (\Cref{rem:fivegradings}) into three kinds.

\begin{corollary}\label{cor:trichotomy}
Let $m,n\ge2$. The quantum gradings $q=m+n+2d$, $-2\le d\le m+n$, of $L_{m,n}$ fall into
three disjoint sets:
\begin{itemize}
\item $\mathcal A(m,n)$, the five gradings with $d\in\{-2,-1,1,m+n-1,m+n\}$ of
\Cref{thm:vanishing};
\item $\mathcal C(m,n)$, the gradings with $d\ne1$, $d\le m+n-2$ and
$m,n\in\{2,\dots,d\}\cup\{d+2\}$ of \Cref{prop:fixedpoints};
\item $\mathcal B(m,n)$, the remaining ones, namely those with $0\le d\le m+n-2$ and
$d\ne1$ in which $m$ or $n$ equals $d+1$ or is at least $d+3$; these are the gradings in
which \Cref{thm:reduction} strictly lowers $(m,n)$.
\end{itemize}
If $q\in\mathcal B(m,n)$, then the grading $q^{*}$ of \Cref{thm:reduction} lies in
$\mathcal C\bigl(M(q),N(q)\bigr)$.
\end{corollary}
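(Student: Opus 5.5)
The argument is bookkeeping in the window index $d$. It uses only \Cref{rem:fivegradings}, \Cref{thm:vanishing}, \Cref{prop:fixedpoints}, \Cref{lem:rho-vs-MN} and the properties of $\rho_d$ established in the proof of \Cref{thm:reduction}.

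\emph{Disjointness and exhaustion.} By \Cref{rem:fivegradings} the noncontractible gradings are $q=m+n+2d$ with $-2\le d\le m+n$. Removing the five values of $\mathcal A(m,n)$ leaves exactly the range \eqref{eq:room}, namely $0\le d\le m+n-2$ and $d\ne1$. On this range I will split according to whether $(M(q),N(q))=(m,n)$. By \Cref{prop:fixedpoints} this holds precisely when $m,n\in\{2,\dots,d\}\cup\{d+2\}$, which is $\mathcal C(m,n)$. Its complement in the range consists of the gradings where some index lies outside $\{2,\dots,d\}\cup\{d+2\}$. Since $m,n\ge2$, such an index equals $d+1$ or is at least $d+3$, which is the stated description of $\mathcal B(m,n)$. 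For these gradings $\rho_d(k)\le k$ with strict inequality at that index, so \Cref{thm:reduction} strictly lowers $(m,n)$. The three sets are therefore disjoint and cover everything.

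\emph{Image of the reduction.} Let $q\in\mathcal B(m,n)$ and write $(M,N)=(\rho_d(m),\rho_d(n))$. Then $q^*=M+N+2d$ has the same window index $d$ for $L_{M,N}$. Three things need checking.

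\begin{enumerate}[label=\textup{(\roman*)}]
\item $M,N\ge2$. This is immediate from the definition of $\rho_d$ together with $d\ne1$.
\item $(M,N,q^*)$ satisfies \eqref{eq:room}. The condition $0\le d$, $d\ne1$ is unchanged. The inequality $d\le M+N-2$ was shown in the proof of \Cref{thm:reduction}.
\item $(M,N)$ is a fixed point, that is, $\rho_d(M)=M$ and $\rho_d(N)=N$. The image of $\rho_d$ lies in $\{2,\dots,d\}\cup\{d+2\}$, and $\rho_d$ is the identity on that set.
\end{enumerate}

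By \Cref{prop:fixedpoints} applied to $L_{M,N}$, point (iii) gives $q^*\in\mathcal C(M,N)$.

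The only point needing care is the idempotence in (iii), in particular the case $d=0$. There $\{2,\dots,d\}$ is empty and $\rho_0$ maps every $k\ge2$ to $2=d+2$. Nothing else is substantive.
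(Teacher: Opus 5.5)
Your proposal is correct and follows the paper's proof. You split the range \eqref{eq:room} using \Cref{prop:fixedpoints}, and for $q\in\mathcal B(m,n)$ you combine $M+N\ge d+2$ from the proof of \Cref{thm:reduction} with the idempotence of $\rho_d$ to conclude $q^{*}\in\mathcal C(M,N)$; the only cosmetic difference is that you read idempotence off the image of $\rho_d$, whereas the paper sets $j=\rho_d(k)$ in the property $\rho_d(j)=\rho_d(k)$ for $\rho_d(k)\le j\le k$.
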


\begin{proof}
A value of $d$ in $[-2,m+n]$ other than the five values of $\mathcal A(m,n)$ satisfies
\eqref{eq:room}, and the grading $m+n+2d$ then lies in $\mathcal C(m,n)$ or in
$\mathcal B(m,n)$ according as $m,n\in\{2,\dots,d\}\cup\{d+2\}$ or not. By definition $\mathcal{B}$ and $\mathcal{C}$ are disjoint.

Let $q\in\mathcal B(m,n)$ and put $M:=M(q)$, $N:=N(q)$. The first paragraph of the proof
of \Cref{thm:reduction} shows that $M+N\ge d+2$; taking $j=\rho_d(k)$ in the second property
displayed there gives $\rho_d(\rho_d(k))=\rho_d(k)$, so $\rho_{d}(M)=M$ and $\rho_{d}(N)=N$,
that is, $M,N\in\{2,\dots,d\}\cup\{d+2\}$. Since $d(q^{*})=d$, this says that
$q^{*}\in\mathcal C(M,N)$.
\end{proof}

On $\mathcal C(m,n)$ neither \Cref{thm:vanishing} nor \Cref{thm:reduction} determines $\nu$,
so these are the gradings that have to be treated directly. Whether $m+n+2d$ lies in
$\mathcal C(m,n)$ depends only on $d$ and on the unordered pair $\{m,n\}$. We therefore fix
$d$ and collect the pairs for which it does.

\begin{definition}\label{def:Id}
For $d\ge0$ with $d\ne1$, let $I_d$ be the set of unordered pairs $\{m',n'\}$ with
\[
m',n'\in\{2,\dots,d\}\cup\{d+2\}\qquad\text{and}\qquad m'+n'\ge d+2 .
\]
\end{definition}

Thus $m+n+2d\in\mathcal C(m,n)$ if and only if $\{m,n\}\in I_d$, and by
\Cref{cor:trichotomy} the skein reduction at level $d$ always ends in $I_d$. For a fixed $d$ there
are infinitely many pairs $\{m,n\}$ with a grading at level $d$, but, as we now show, only
finitely many of them satisfy this condition.

\begin{corollary}\label{cor:finite}
Let $d\ge0$ with $d\ne1$. Then $I_d$ is finite: $|I_0|=1$, and
$|I_d|=d+\lfloor d^{2}/4\rfloor$ for $d\ge2$, so that $|I_2|=3$, $|I_3|=5$, $|I_4|=8$ and
$|I_5|=11$.
\end{corollary}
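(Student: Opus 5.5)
This is a pure counting statement, so I would count unordered pairs directly from \Cref{def:Id}. Put $V_d:=\{2,\dots,d\}\cup\{d+2\}$. This set is finite, with $d$ elements for $d\ge2$ and with $V_0=\{2\}$. Hence $I_d$, being a set of unordered pairs from $V_d$, is finite, and only the exact count needs work.

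For $d=0$, the only candidate is $\{2,2\}$, and $2+2\ge2$, so $|I_0|=1$.

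For $d\ge2$, split the pairs according to whether $d+2$ occurs.

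\emph{Pairs containing $d+2$.} If $d+2$ is one entry, the other entry $b$ ranges over all of $V_d$, including $b=d+2$. The condition $(d+2)+b\ge d+2$ holds automatically. This gives exactly $|V_d|=d$ pairs.

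\emph{Pairs with $a,b\in\{2,\dots,d\}$.} Here we need the number of unordered pairs $\{a,b\}$, equal entries allowed, with $a+b\ge d+2$. I would count ordered pairs with $a\le b$. For fixed $a\in\{2,\dots,d\}$, we need $b\ge\max(a,\,d+2-a)$ and $b\le d$. This gives:
\begin{itemize}
\item for $a\le (d+2)/2$, there are $a-1$ choices of $b$;
\item for $a>(d+2)/2$, there are $d-a+1$ choices of $b$.
\end{itemize}
Summing over $a$ should give $\lfloor d^2/4\rfloor$. I would verify this by separating $d$ even and $d$ odd. A cleaner alternative is to substitute $a'=a-1$ and $b'=b-1$. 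Then $a',b'\in\{1,\dots,d-1\}$ with $a'+b'\ge d$, and the count becomes the number of $\{a',b'\}$ with $a'\le b'$, $b'\le d-1$ and $a'+b'\ge d$. This is the classical count $\lfloor d^2/4\rfloor$. One way to see it: the ordered pairs in $\{1,\dots,d-1\}^2$ with $a'+b'\ge d$ number $\binom{d}{2}$; add the diagonal ones, of which there are $\lfloor d/2\rfloor$; then halve, giving $\bigl(\binom d2+\lfloor d/2\rfloor\bigr)/2=\lfloor d^2/4\rfloor$.

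Adding the two cases gives $|I_d|=d+\lfloor d^2/4\rfloor$. Substituting gives $3,5,8,11$ for $d=2,3,4,5$. As a sanity check, for $d=2$ the pairs are $\{2,2\},\{2,4\},\{4,4\}$.

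The only mildly delicate point is the parity bookkeeping in the halving step. I would check it directly: the diagonal pairs $(k,k)$ with $2k\ge d$ and $k\le d-1$ number $d-\lceil d/2\rceil=\lfloor d/2\rfloor$, and $\binom d2+\lfloor d/2\rfloor$ equals $2\lfloor d^2/4\rfloor$ in both parities.
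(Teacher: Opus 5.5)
Your proof is correct and follows the paper's argument: the same split into the $d$ pairs containing $d+2$ and the pairs inside $\{2,\dots,d\}$, with the per-$a$ count $\min(a-1,\,d+1-a)$ summed separately for $d$ even and $d$ odd. Your alternative evaluation, counting ordered pairs, adding the diagonal and halving to get $\bigl(\binom d2+\lfloor d/2\rfloor\bigr)/2=\lfloor d^2/4\rfloor$, is a valid shortcut for that same sum.
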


\begin{proof}
Put $K_d:=\{2,\dots,d\}\cup\{d+2\}$. For $d=0$ we have $K_0=\{2\}$, so
$I_0=\bigl\{\{2,2\}\bigr\}$. Let $d\ge2$, so that $|K_d|=d$. Every pair $\{a,d+2\}$ with
$a\in K_d$ lies in $I_d$, which gives $d$ pairs. The other members of $I_d$ are the pairs
$\{a,b\}$ with $2\le a\le b\le d$ and $a+b\ge d+2$. For fixed $a$ these are the $b$ with
$\max(a,d+2-a)\le b\le d$, and there are $\min(a-1,d+1-a)$ of them. Summing over
$a=2,\dots,d$ and writing $t=a-1$, their number is
\[
\sum_{t=1}^{d-1}\min(t,d-t)\ =\ \lfloor d^{2}/4\rfloor ,
\]
since for $d=2e$ the sum is $2(1+\dots+(e-1))+e=e^{2}$, and for $d=2e+1$ it is
$2(1+\dots+e)=e^{2}+e$. Hence $|I_d|=d+\lfloor d^{2}/4\rfloor$.
\end{proof}

\begin{figure}[t]
\centering
\begin{tikzpicture}[scale=0.62,>=stealth]
\foreach \m in {2,3,4,6}{\foreach \n in {2,3,4,6}{
  \pgfmathtruncatemacro{\mnsum}{\m+\n}
  \ifnum\mnsum>5 \fill[black!14] (\m-.46,\n-.46) rectangle (\m+.46,\n+.46);\fi}}
\foreach \n in {5,7,8,9}{\foreach \m in {2,...,9}{
  \draw[->,black!40] (\m,\n-.2) -- (\m,\n-.8);}}
\foreach \m in {5,7,8,9}{\foreach \n in {2,...,9}{
  \draw[->,black!40] (\m-.2,\n) -- (\m-.8,\n);}}
\foreach \n in {9,8,7}{\draw[->,thick] (9,\n-.2) -- (9,\n-.8);}
\foreach \m in {9,8,7}{\draw[->,thick] (\m-.2,6) -- (\m-.8,6);}
\foreach \m in {2,...,9}{\foreach \n in {2,...,9}{
  \pgfmathtruncatemacro{\mnsum}{\m+\n}
  \ifnum\mnsum>5 \fill (\m,\n) circle (2.1pt);\else \draw (\m,\n) circle (2.1pt);\fi}}
\draw[thick] (6,6) circle (6.5pt);
\foreach \k in {2,...,9}{
  \node[font=\scriptsize] at (\k,1.25) {$\k$};
  \node[font=\scriptsize] at (1.25,\k) {$\k$};}
\node[font=\scriptsize] at (4,0.75) {$d$};
\node[font=\scriptsize] at (5,0.75) {$d{+}1$};
\node[font=\scriptsize] at (6,0.75) {$d{+}2$};
\node[font=\scriptsize,anchor=east] at (0.95,4) {$d$};
\node[font=\scriptsize,anchor=east] at (0.95,5) {$d{+}1$};
\node[font=\scriptsize,anchor=east] at (0.95,6) {$d{+}2$};
\node at (9.9,1.25) {$m$};
\node at (1.25,9.9) {$n$};
\draw (11.0,8.6) circle (2.1pt);
\node[font=\scriptsize,anchor=west] at (11.3,8.6) {$\mathcal A$};
\fill (11.0,7.9) circle (2.1pt);
\node[font=\scriptsize,anchor=west] at (11.3,7.9) {$\mathcal B$};
\fill[black!14] (11.0-.3,7.2-.3) rectangle (11.0+.3,7.2+.3);
\fill (11.0,7.2) circle (2.1pt);
\node[font=\scriptsize,anchor=west] at (11.3,7.2) {$\mathcal C$};
\end{tikzpicture}
\caption{The skein reduction at level $d=4$, for $2\le m,n\le9$; see \Cref{rem:diagonal}.}
\label{fig:reduction}
\end{figure}

\begin{remark}[Where the skein reduction terminates from above]\label{rem:diagonal}
Fix $d$ and read $\gamma=[\,d-n+2\in S_m\,]$ as $n$ decreases. For $n\ge d+3$ the source
$A$ vanishes (\Cref{rem:gammaq}), so each step is an equivalence and
\[
\Xo^{\,m+n+2d}(L_{m,n})\ \simeq\ \Xo^{\,m+3d+2}(L_{m,d+2})\qquad\text{for all }m\ge2,\ n\ge d+2 .
\]
The first obstruction occurs at $n=d+2$, where $d-n+2=0\in S_m$ for every $m$. Since
$\rho_d(k)=d+2$ for $k\ge d+2$, \Cref{thm:reduction} reduces every pair $(m,n)$ with $n\ge d+2$
to $(\rho_d(m),d+2)$, and every pair with $m\ge d+2$ to $(d+2,\rho_d(n))$; if $m,n\ge d+2$,
both apply, and the skein reduction terminates at the diagonal pair $(d+2,d+2)$, in quantum
grading $4d+4$. Since $\rho_d^{-1}(d+2)=\{d+2,d+3,\dots\}$, while $\rho_d^{-1}(k)$ has at most
two elements for $k\le d$, the members of $I_d$ reached from infinitely many pairs are exactly
the pairs $\{M,d+2\}$ with $M\in\{2,\dots,d\}\cup\{d+2\}$. These are the pairs $(M,d+2)$ whose value of $\nu$ appears in \Cref{thm:onesided}, and in
\Cref{thm:stabilization} for $M=d+2$.

\Cref{fig:reduction} shows the case $d=4$. The point $(m,n)$ stands for
$\Xo^{\,m+n+8}(L_{m,n})$ and is drawn as an open circle, a dot or a shaded dot according as
its grading lies in $\mathcal A(m,n)$, $\mathcal B(m,n)$ or $\mathcal C(m,n)$
(\Cref{cor:trichotomy}). Thus the shaded pairs are those with $\{m,n\}\in I_4$, where
\[
I_4=\bigl\{\{2,4\},\{2,6\},\{3,3\},\{3,4\},\{3,6\},\{4,4\},\{4,6\},\{6,6\}\bigr\};
\]
the five pairs with distinct entries appear twice, which gives $13$ shaded points. An arrow
$(m,n)\to(m,n-1)$ or $(m,n)\to(m-1,n)$ is a skein step to which \Cref{cor:deltanull}
applies, so $\nu$ is the same at both ends; every pair in $\mathcal B$ has such an arrow, and
no shaded pair has one. The black path is the reduction from $(9,9)$ in the proof of
\Cref{thm:reduction}, which first lowers $n$ and then $m$. In the row $n=6$ the pair $(5,6)$
moves left to $(4,6)$, and the pairs $(m,6)$ with $m\ge7$ move left to $(6,6)$; so $(4,6)$ is
the end of the columns $m=4$ and $m=5$ above that row, and the circled pair $(6,6)$ is the
end of the whole quadrant $m,n\ge6$. The shaded pairs with $m,n\le4$, by contrast, are
reached only from pairs with $m,n\le5$.
\end{remark}

\section{Stabilization: proof of Theorem C}\label{sec:stabilization}

\begingroup
\renewcommand{\thetheorem}{C}\renewcommand{\theHtheorem}{thmC}
\begin{theorem}\label{thm:stabilization}
Fix $d\ge3$ and set $\nu_d:=\nu(d+2,d+2;\,4d+4)$. Then for all $m,n\ge d+2$,
\[
\Xo^{\,m+n+2d}\bigl(L_{m,n}\bigr)\ \simeq\ (\Sigma^{d-2}\CP^{2})^{\wed\nu_d}\ \wed\ (\Sm^{d})^{\wed\,d-1-\nu_d}\ \wed\ (\Sm^{d+1})^{\wed\,2d}\ \wed\ (\Sm^{d+2})^{\wed\,d+1-\nu_d}.
\]
\end{theorem}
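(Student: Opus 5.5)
Fix $d\ge3$ and $m,n\ge d+2$, and put $q=m+n+2d$. We deduce the statement from \Cref{thm:reduction} together with the classification \Cref{cor:classification}, supplemented by the homology count of \Cref{lem:Pvanishing}.

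The first step is to check that $q$ satisfies \eqref{eq:room}. Since $d\ge3$ we have $d\ne1$. Moreover $m+n\ge2d+4$, so $d\le m+n-2$.

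The second step computes the reduction target. Since $m,n\ge d+2$, \Cref{def:rho} gives $\rho_d(m)=\rho_d(n)=d+2$, and by \Cref{lem:rho-vs-MN} this means $M(q)=N(q)=d+2$. The reduced grading is then $q^*=2(d+2)+2d=4d+4$. \Cref{thm:reduction} therefore yields
\[
\Xo^{q}(L_{m,n})\simeq\Xo^{4d+4}(L_{d+2,d+2})\wed W
\]
with $W$ a wedge of spheres, and
\[
\nu(m,n;q)=\nu(d+2,d+2;4d+4)=\nu_d.
\]
This is also the content of \Cref{rem:diagonal}.

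The third step reads off the sphere counts from the homology. By \Cref{lem:window}, $\Xo^q(L_{m,n})$ has free homology in degrees $d,d+1,d+2$ of ranks $P(d)$, $2P(d+1)$, $P(d+2)$, where $P=P_{m,n}$. Since $m,n\ge d+2\ge t$ for each $t\in\{d,d+1,d+2\}$, \Cref{lem:Pvanishing} applies. With $d\ge3$ it gives
\[
P(d)=d-1,\qquad P(d+1)=d,\qquad P(d+2)=d+1.
\]
Now apply \Cref{cor:classification} with $(a,b,c)=(d-1,2d,d+1)$ and $\nu=\nu(m,n;q)=\nu_d$. This produces exactly the displayed wedge decomposition. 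It also shows $\nu_d\le d-1$, so all the exponents are nonnegative.

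No step here is a real obstacle: the work is done by \Cref{thm:reduction}. The one point needing care is that \Cref{lem:Pvanishing} really requires $m,n\ge t$ for $t$ up to $d+2$, which is exactly the hypothesis $m,n\ge d+2$. We also need $d\ge3$, since $d+2$ and the lower window degree $d$ must avoid the special values $t\in\{0,1,2\}$ of $P$. For $d=2$ one would instead get $P(2)=2$, and the formula would change, which is why the theorem assumes $d\ge3$.
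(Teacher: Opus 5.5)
Your proposal is correct and follows essentially the same route as the paper's proof. It uses \Cref{lem:rho-vs-MN} and \Cref{thm:reduction} to get $\nu(m,n;m+n+2d)=\nu_d$, then \Cref{lem:Pvanishing} (which needs $m,n\ge d+2$ and $d\ge3$, as you note) for the ranks $(d-1,2d,d+1)$, and substitutes into the classification. Your explicit check of \eqref{eq:room} and of the nonnegativity of the exponents just spells out steps the paper leaves implicit.
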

\addtocounter{theorem}{-1}\endgroup

\begin{proof}
Since $m,n\ge d+2$, \Cref{lem:rho-vs-MN} gives $(M,N)=(d+2,d+2)$ and $q^{*}=4d+4$, so $\nu(m,n;m+n+2d)=\nu_d$ by \Cref{thm:reduction}. For the ranks, $m,n\ge d+2>d$ so \Cref{lem:Pvanishing} gives $P(d)=d-1$, $P(d+1)=d$ and $P(d+2)=d+1$ for $d\ge3$. Substituting into \eqref{eq:intro-classification} gives the statement.
\end{proof}

\Cref{thm:stabilization} is the case $m\ge d+2$ of the following stabilization in the second
index alone, which makes the display in \Cref{rem:diagonal} explicit.

\begin{theorem}\label{thm:onesided}
Fix $d\ge3$ and $m\ge2$, and put $M:=\rho_d(m)$. For every $n\ge d+2$,
\[
\Xo^{\,m+n+2d}(L_{m,n})\ \simeq\ (\Sigma^{d-2}\CP^{2})^{\wed\nu}\wed(\Sm^{d})^{\wed\,a-\nu}
\wed(\Sm^{d+1})^{\wed\,b}\wed(\Sm^{d+2})^{\wed\,c-\nu},
\]
where $\nu=\nu(M,d+2;M+3d+2)$ and
$(a,b,c)=\bigl(P(d),2P(d+1),P(d+2)\bigr)$.
\end{theorem}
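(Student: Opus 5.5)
The plan is to combine \Cref{cor:classification} with \Cref{thm:reduction} and to handle the edge cases separately. By \Cref{lem:window}, $\Xo^{m+n+2d}(L_{m,n})$ has free homology of ranks $(a,b,c)=(P(d),2P(d+1),P(d+2))$ in degrees $d,d+1,d+2$. \Cref{cor:classification} then gives the displayed form with $\nu=\nu(m,n;m+n+2d)$. So everything reduces to proving
\[
\nu(m,n;m+n+2d)=\nu(M,d+2;M+3d+2)\qquad\text{for all } n\ge d+2 .
\]

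First I would check whether \eqref{eq:room} holds. Since $d\ge3$, $d\ne1$ and $d\ge0$ are automatic. Also $m+n-2\ge 2+d+2-2=d+2>d$, so $d\le m+n-2$. Hence \Cref{thm:reduction} applies and gives $\nu(m,n;q)=\nu(M(q),N(q);q^{*})$. By \Cref{lem:rho-vs-MN}, $M(q)=\rho_d(m)=M$ and $N(q)=\rho_d(n)=d+2$ because $n\ge d+2$. Then $q^{*}=M+d+2+2d=M+3d+2$, which is exactly the claimed value of $\nu$.

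As a sanity check, and as an alternative route that makes \Cref{rem:diagonal} explicit, I would run the one-index skein descent directly. For $n'\ge d+3$ we have $d-n'+2<0$, so $\gamma=\alpha=\beta=0$ and the source $A$ of \eqref{eq:skeinLmn} is contractible (\Cref{rem:gammaq}). Each step from $n'$ to $n'-1$ is then an equivalence, giving $\Xo^{m+n+2d}(L_{m,n})\simeq\Xo^{m+3d+2}(L_{m,d+2})$. After that, \Cref{thm:reduction} lowers $m$ to $M$ at fixed $n=d+2$; this is allowed because $\rho_d(d+2)=d+2$.

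The only step needing any care is bookkeeping rather than homotopy theory: confirming that the ranks $(a,b,c)$ in the statement are those of $L_{m,n}$ and not of $L_{M,d+2}$. The wedge $W$ of spheres in \Cref{thm:reduction} absorbs the difference, and $\nu$ depends only on the rank of $\Sq^2$. The decomposition is then read off from \Cref{cor:classification} applied to $L_{m,n}$ itself, with the $\nu$ computed above; its uniqueness clause ensures there is no ambiguity.
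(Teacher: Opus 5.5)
Your proposal is correct and follows the paper's proof essentially verbatim: you check \eqref{eq:room} from $d\ge3$ and $n\ge d+2$, use \Cref{lem:rho-vs-MN} to get $(M(q),N(q))=(M,d+2)$ and $q^{*}=M+3d+2$, apply \Cref{thm:reduction}, and substitute into the classification for $L_{m,n}$. Your alternative descent through contractible sources for $n'\ge d+3$ is also valid, but it only restates \Cref{rem:diagonal} and is not needed for the argument.
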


\begin{proof}
Let $n\ge d+2$ and put $q:=m+n+2d$, so that $d(q)=d$. Since $d\ge3$ and
$m+n-2\ge n\ge d+2$, the grading $q$ satisfies \eqref{eq:room}. By \Cref{lem:rho-vs-MN},
$M(q)=\rho_d(m)=M$ and $N(q)=\rho_d(n)=d+2$, so $q^{*}=M+(d+2)+2d=M+3d+2$, and
\Cref{thm:reduction} gives
\[
\nu(m,n;q)\ =\ \nu(M,d+2;\,M+3d+2)\ =\ \nu .
\]
Substituting this value of $\nu$ and $(a,b,c)=\bigl(P(d),2P(d+1),P(d+2)\bigr)$ into
\eqref{eq:intro-classification} gives the displayed equivalence.
\end{proof}

\section{Examples: the propagation of \texorpdfstring{$q=14$}{q=14}}\label{sec:examples}

We follow the summand $\Sigma^{2}\CP^{2}$ of \eqref{eq:schutz-example} from $L_{3,3}$ in
quantum grading $14$ through the family. Since $14=3+3+2\cdot4$, the relevant gradings
are those of window index $4$,
\[
q\ =\ m+n+8,
\]
and throughout this section $d=4$. By \Cref{lem:window} the spectrum
$\Xo^{m+n+8}(L_{m,n})$ has free homology of ranks $\bigl(P(4),2P(5),P(6)\bigr)$ in degrees
$4,5,6$, so by \Cref{cor:classification}
\begin{equation}\label{eq:d4type}
\Xo^{\,m+n+8}(L_{m,n})\ \simeq\ (\Sigma^{2}\CP^{2})^{\wed\nu}\wed(\Sm^{4})^{\wed P(4)-\nu}
\wed(\Sm^{5})^{\wed 2P(5)}\wed(\Sm^{6})^{\wed P(6)-\nu},
\end{equation}
with $\nu=\nu(m,n;m+n+8)$. We show that $\nu=1$ for all $m,n\ge3$. The arguments of
\Cref{sec:skein,sec:reduction} reduce this to the single value $\nu_{4}=\nu(6,6;20)$, and one
KnotJob computation supplies that value.

\subsection{The sources}

When a crossing of the $T(2,n)$ factor is resolved at $d=4$, the source of
\eqref{eq:skeinLmn} does not depend on $m$.

\begin{lemma}\label{lem:sources4}
Let $m\ge2$, $n\ge4$ and $q=m+n+8$, and let $A$ be as in \Cref{lem:source}. Then
\[
A\ \simeq\
\begin{cases}
\Sm^{3}\wed\Sm^{5}, & n=4,\\
\Sm^{4}\wed\Sm^{4}, & n=5,\\
\Sm^{5}, & n=6,\\
\ast, & n\ge7 .
\end{cases}
\]
In particular $\gamma=1$ if $n\in\{4,6\}$, and $\gamma=0$ if $n=5$ or $n\ge7$.
\end{lemma}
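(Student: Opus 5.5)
This is a direct specialization of \Cref{lem:source}(ii) with $d=4$. That lemma already gives
\[
A\simeq(\Sm^{3})^{\wed\alpha}\wed(\Sm^{4})^{\wed\beta}\wed(\Sm^{5})^{\wed\gamma},
\]
with $\alpha=[\,4-n\in S_m\,]$, $\beta=2[\,5-n\in S_m\,]$ and $\gamma=[\,6-n\in S_m\,]$. So the plan is only to evaluate three membership conditions in $S_m=\{0\}\cup\{2,\dots,m\}$ for each value of $n$. None of them depends on $m$ beyond the standing hypothesis $m\ge2$.

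The case split goes as follows.
\begin{itemize}
\item For $n=4$ the three integers are $0,1,2$. We have $0\in S_m$, $1\notin S_m$, and $2\in S_m$ because $m\ge2$. Hence $(\alpha,\beta,\gamma)=(1,0,1)$ and $A\simeq\Sm^{3}\wed\Sm^{5}$.
\item For $n=5$ they are $-1,0,1$. Only $0$ lies in $S_m$, so $(\alpha,\beta,\gamma)=(0,2,0)$ and $A\simeq\Sm^{4}\wed\Sm^{4}$.
\item For $n=6$ they are $-2,-1,0$. Only $0$ lies in $S_m$, so $(\alpha,\beta,\gamma)=(0,0,1)$ and $A\simeq\Sm^{5}$.
\item For $n\ge7$ all three integers are negative, so $A\simeq\ast$. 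This also agrees with \Cref{rem:gammaq}, since $q=m+n+8\le m+3n-6$ exactly when $n\ge7$.
\end{itemize}
The final sentence about $\gamma$ is read off from these four cases.

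There is essentially no obstacle. The only point needing care is to apply \Cref{lem:source} with $n\ge4\ge2$, which is within its hypotheses $m,n\ge2$. One should also note that the sphere dimensions $d-1,d,d+1$ are $3,4,5$. The case $n=4$ is the one where $m\ge2$ is actually used, to ensure $2\in S_m$.
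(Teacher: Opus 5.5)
Your proposal is correct and follows essentially the same route as the paper's proof: specialize \Cref{lem:source}(ii) to $d=4$ and evaluate $\alpha,\beta,\gamma$ using $0,2\in S_m$, $1\notin S_m$ and the absence of negative integers from $S_m$. Your extra consistency check against \Cref{rem:gammaq} (that $m+n+8\le m+3n-6$ exactly when $n\ge7$) is also correct, though not needed.
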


\begin{proof}
By \Cref{lem:source}(ii) with $d=4$,
$A\simeq(\Sm^{3})^{\wed\alpha}\wed(\Sm^{4})^{\wed\beta}\wed(\Sm^{5})^{\wed\gamma}$ with
$\alpha=[\,4-n\in S_m\,]$, $\beta=2\,[\,5-n\in S_m\,]$ and $\gamma=[\,6-n\in S_m\,]$. As
$m\ge2$ we have $0,2\in S_m$ and $1\notin S_m$, and no negative integer lies in $S_m$. So
$(\alpha,\beta,\gamma)$ is $(1,0,1)$ for $n=4$, $(0,2,0)$ for $n=5$, $(0,0,1)$ for $n=6$,
and $(0,0,0)$ for $n\ge7$.
\end{proof}

\subsection{Three kinds of step}\label{ss:steps}

Consider a step $L_{m,n-1}\rightsquigarrow L_{m,n}$ with $m\ge2$ and $n\ge4$, write
$T=\Xo^{\,q-1}(L_{m,n-1})$, and let $a'$ be the number of summands $\Sm^{4}$ of $T$ in
\eqref{eq:d4type}, so that $a'=P_{m,n-1}(4)-\nu(m,n-1;q-1)$. By \Cref{lem:sources4} each
step is of one of the following kinds.
\begin{enumerate}[label=\textup{(\roman*)}]
\item If $n=5$ or $n\ge7$, then $\gamma=0$ and \Cref{cor:deltanull} gives
$\Xo^{q}(L_{m,n})\simeq T\wed\Sigma A$, with $\nu$ unchanged. For $n\ge7$ the source is
contractible and the step is an equivalence $\Xo^{q}(L_{m,n})\simeq T$.
\item If $n\in\{4,6\}$, then $\gamma=1$, and by \Cref{prop:dichotomy} either $\delta\simeq0$
and $\nu$ is unchanged, or $\delta\not\simeq0$ and $\nu$ increases by exactly one.
\item If in case (ii) we have $a'=0$, there is nowhere for a Hopf map to land:
$\delta\simeq0$ by \Cref{prop:dichotomy}(ii), and $\nu$ is unchanged.
\end{enumerate}
For the first step, $L_{3,3}\rightsquigarrow L_{3,4}$ in grading $15$, we have $n=4$, so
$\gamma=1$ and $A\simeq\Sm^{3}\wed\Sm^{5}$ by \Cref{lem:sources4}. By
\eqref{eq:schutz-example}, $T=\Xo^{14}(L_{3,3})\simeq\Sigma^{2}\CP^{2}\wed(\Sm^{5})^{\wed4}$
has no summand $\Sm^{4}$, that is, $a'=P_{3,3}(4)-\nu(3,3;14)=1-1=0$. So the step is of
kind (iii): $\delta\simeq0$, and $\Xo^{15}(L_{3,4})\simeq T\wed\Sigma A$.
\subsection{Monotonicity}

\begin{proposition}\label{prop:propagation}
For all $m,n\ge3$,
\[
1\ \le\ \nu(m,n;m+n+8)\ \le\ \nu_{4}.
\]
In particular $\nu_{4}\ge1$. If $\nu_{4}=1$, then $\nu(m,n;m+n+8)=1$ for all $m,n\ge3$, and
the connecting map of every step $L_{m,n-1}\rightsquigarrow L_{m,n}$ with
$3\le m\le6$ and $4\le n\le6$ is null-homotopic.
\end{proposition}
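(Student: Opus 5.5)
Throughout, $d=4$, so $q=m+n+8$. The plan is to combine two facts along skein chains: $\nu$ never decreases across a skein step (\Cref{prop:dichotomy}), and \Cref{thm:reduction} collapses every pair to one of finitely many terminal pairs. The only extra ingredient is a direct computation in the first column $m=3$.

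\emph{Lower bound.} By \Cref{prop:dichotomy}, every step $L_{m,n-1}\rightsquigarrow L_{m,n}$ with $n\ge3$ satisfies $\nu(m,n;q)\in\{\nu(m,n-1;q-1),\,\nu(m,n-1;q-1)+1\}$. Since $L_{m,n}=L_{n,m}$, the same holds when the first index is raised. So $\nu$ is nondecreasing along any monotone path in $(m,n)$ at fixed $d=4$. Starting from $\nu(3,3;14)=1$, which is \eqref{eq:schutz-example} combined with Rajapakse's identification and \Cref{cor:classification}, I will raise $n$ from $3$ and then $m$ from $3$. This gives $\nu(m,n;m+n+8)\ge1$ for all $m,n\ge3$.

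\emph{Upper bound.} Given $m,n\ge3$, I will raise both indices along monotone paths to $(\max(m,6),\max(n,6))$. Monotonicity gives
\[
\nu(m,n;m+n+8)\le\nu(\max(m,6),\max(n,6);\cdot).
\]
Both indices of the larger pair are $\ge d+2=6$, so by \Cref{thm:stabilization} (or \Cref{thm:reduction} with $\rho_4=6$) its value is $\nu_4$. The bound $\nu_4\ge1$ then follows from the lower bound applied at $(6,6)$.

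\emph{The case $\nu_4=1$.} If $\nu_4=1$, the two bounds force $\nu(m,n;m+n+8)=1$ for all $m,n\ge3$. Consider a step $L_{m,n-1}\rightsquigarrow L_{m,n}$ with $3\le m\le6$ and $4\le n\le6$. Both endpoints have $m\ge3$ and $n-1\ge3$, so $\nu$ equals $1$ at both ends. By \Cref{prop:dichotomy}, a non-null $\delta$ would raise $\nu$ by exactly one, so $\delta\simeq0$.

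\emph{Main obstacle.} The only delicate point is making sure that every step on the chosen paths is a genuine skein step covered by \Cref{prop:dichotomy}, in particular with $n\ge3$ so that $\nu'=\nu(m,n-1;q-1)$ is defined. Starting all paths at indices $\ge3$ guarantees this. I also need the window index to be preserved along each step, which is \Cref{lem:source}(i): lowering $n$ by one together with $q$ by one keeps $d=4$.
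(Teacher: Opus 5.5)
Your proof is correct and follows the paper's argument. It uses monotonicity of $\nu$ along skein steps (\Cref{prop:dichotomy}) starting from $\nu(3,3;14)=1$ for the lower bound, and the agreement of the two bounds together with \Cref{prop:dichotomy} for the null-homotopy of the connecting maps. The one difference is the upper bound for large indices: you raise to $(\max(m,6),\max(n,6))$ and invoke \Cref{thm:stabilization}, while the paper first lowers via \Cref{thm:reduction} to $\rho_4(m),\rho_4(n)\in\{3,4,6\}$ and then raises to $(6,6)$; both rest on the same reduction theorem. Your opening remark about ``a direct computation in the first column $m=3$'' is never used and can be dropped.
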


\begin{proof}
By \Cref{prop:dichotomy}, $\nu(m,n;q)\ge\nu(m,n-1;q-1)$ whenever $m\ge2$ and $n\ge3$, and by
$L_{m,n}=L_{n,m}$ the same holds in the first index. Raising the indices one at a time
from $(3,3)$ therefore gives $\nu(m,n;m+n+8)\ge\nu(3,3;14)=1$, the equality being
\eqref{eq:schutz-example}. If $3\le m,n\le6$, raising them one at a time from $(m,n)$ to
$(6,6)$ gives $\nu(m,n;m+n+8)\le\nu(6,6;20)=\nu_{4}$. For general $m,n\ge3$ the condition
\eqref{eq:room} holds, since $4\le m+n-2$, so \Cref{thm:reduction} gives
$\nu(m,n;m+n+8)=\nu\bigl(\rho_{4}(m),\rho_{4}(n);\rho_{4}(m)+\rho_{4}(n)+8\bigr)$; and
$\rho_{4}(k)\in\{3,4,6\}$ for every $k\ge3$, which returns us to the previous case.

If $\nu_{4}=1$ the two bounds agree. A step with $3\le m\le6$ and $4\le n\le6$ lies on a
chain from $(3,3)$ to $(6,6)$ along which $\nu$ is constant, so $\nu$ does not increase
at that step, and $\delta\simeq0$ by \Cref{prop:dichotomy}.
\end{proof}

\subsection{The computation up to \texorpdfstring{$T(2,6)\sqcup T(2,6)$}{T(2,6) + T(2,6)}}\label{ss:d4computation}

\Cref{tab:d4} lists, for $3\le m\le n\le6$, one step reaching $L_{m,n}$, its source from
\Cref{lem:sources4}, the kind of step from \Cref{ss:steps}, and the resulting spectrum.
Steps in the first index are read through $L_{m,n}=L_{n,m}$.

\begin{table}[ht]
\centering
\small
\begin{tabular}{cccccc}
\hline
$(m,n)$ & $q$ & from & $A$ & kind & $(a,b,c)$\\
\hline
$(3,3)$ & $14$ & --- & --- & \eqref{eq:schutz-example} & $(0,4,0)$\\
$(3,4)$ & $15$ & $(3,3)$ & $\Sm^{3}\wed\Sm^{5}$ & (iii), $a'=0$ & $(1,4,1)$\\
$(3,5)$ & $16$ & $(3,4)$ & $\Sm^{4}\wed\Sm^{4}$ & (i) & $(1,6,1)$\\
$(3,6)$ & $17$ & $(3,5)$ & $\Sm^{5}$ & (ii), $a'=1$ & $(1,6,2)$\\
$(4,4)$ & $16$ & $(3,4)$ & $\Sm^{3}\wed\Sm^{5}$ & (ii), $a'=1$ & $(2,4,2)$\\
$(4,5)$ & $17$ & $(4,4)$ & $\Sm^{4}\wed\Sm^{4}$ & (i) & $(2,6,2)$\\
$(4,6)$ & $18$ & $(4,5)$ & $\Sm^{5}$ & (ii), $a'=2$ & $(2,6,3)$\\
$(5,5)$ & $18$ & $(4,5)$ & $\Sm^{4}\wed\Sm^{4}$ & (i) & $(2,8,2)$\\
$(5,6)$ & $19$ & $(4,6)$ & $\Sm^{4}\wed\Sm^{4}$ & (i) & $(2,8,3)$\\
$(6,6)$ & $20$ & $(5,6)$ & $\Sm^{5}$ & (ii), $a'=2$ & $(2,8,4)$\\
\hline
\end{tabular}
\medskip
\caption{Window index $d=4$. In every row
$\Xo^{q}(L_{m,n})\simeq\Sigma^{2}\CP^{2}\wed(\Sm^{4})^{\wed a}\wed(\Sm^{5})^{\wed b}\wed
(\Sm^{6})^{\wed c}$, where $(a+1,b,c+1)=\bigl(P(4),2P(5),P(6)\bigr)$.}
\label{tab:d4}
\end{table}

The first three rows are proved outright: $(3,3)$ is \eqref{eq:schutz-example}, $(3,4)$ is
a step of kind (iii), and $(3,5)$ follows by \Cref{cor:deltanull}. In the remaining rows
\Cref{prop:propagation} gives $1\le\nu\le\nu_{4}$, and the steps of kind (ii) with $a'\ge1$
are exactly those at which a Hopf map is not excluded by the theory. A KnotJob computation
\cite{KnotJob} of $\Sq^{2}_{1}$ on $\Kho^{4,20}(L_{6,6};\F_{2})$ gives rank
$1$, so $\nu_{4}=1$ by \Cref{cor:classification} and \Cref{ss:oddspectrum}. By
\Cref{prop:propagation} this single value gives $\nu(m,n;m+n+8)=1$ for all $m,n\ge3$, in
particular in every row of the table, and forces every connecting map in the table to be
null. As consistency checks, KnotJob also gives rank $1$
at $L_{4,4}$, $L_{4,6}$ and $L_{5,6}$ in gradings $16$, $18$ and $19$.

Two features of the table are worth recording. First, \emph{for $m,n<6$ every connecting
map at $d=4$ vanishes}, and here less input suffices: the only steps with $m,n\le5$ at which
a Hopf map has room are $L_{4,3}\rightsquigarrow L_{4,4}$ and
$L_{5,3}\rightsquigarrow L_{5,4}$, and both are settled by the value $\nu(4,4;16)=1$, the
second because $\nu(5,4;17)=\nu(4,4;16)$ by \Cref{cor:deltanull} while
$\nu(5,3;16)=\nu(3,5;16)=1$. Secondly, the entry $(5,6)$ is reached from $(4,6)$ by a step
of kind (i), so $\nu(5,6;19)=\nu(4,6;18)$, while $\nu(5,5;18)=\nu(4,5;17)$ by a step of kind
(i) as well. Hence the step $L_{5,5}\rightsquigarrow L_{5,6}$, which is of kind (ii), has
$\delta\simeq0$ if and only if the step $L_{4,5}\rightsquigarrow L_{4,6}$ has; the theory alone
does not decide which, and $\nu_4=1$ gives $\delta\simeq0$ for both.

\subsection{Stability}

By kind (i) every step with $n\ge7$ is an equivalence, so, as in \Cref{rem:diagonal}, for
each $m\ge2$ the spectra $\Xo^{m+n+8}(L_{m,n})$ with $n\ge6$ are all equivalent to
$\Xo^{m+14}(L_{m,6})$. The rows of \Cref{tab:d4} with second index $6$ therefore give every
larger second index.

\begin{example}\label{ex:onesided}
At $d=4$ and for $m\ge3$, \Cref{thm:onesided} holds with $\nu=1$ (\Cref{ss:d4computation}). For
$m=3,4,5$ the ranks $\bigl(P(4),2P(5),P(6)\bigr)$ are $(2,6,3)$, $(3,6,4)$ and $(3,8,4)$, so
that for all $n\ge6$
\[
\begin{aligned}
\Xo^{\,n+11}(L_{3,n})&\ \simeq\ \Sigma^{2}\CP^{2}\wed\Sm^{4}\wed(\Sm^{5})^{\wed6}
\wed(\Sm^{6})^{\wed2},\\
\Xo^{\,n+12}(L_{4,n})&\ \simeq\ \Sigma^{2}\CP^{2}\wed(\Sm^{4})^{\wed2}\wed(\Sm^{5})^{\wed6}
\wed(\Sm^{6})^{\wed3},\\
\Xo^{\,n+13}(L_{5,n})&\ \simeq\ \Sigma^{2}\CP^{2}\wed(\Sm^{4})^{\wed2}\wed(\Sm^{5})^{\wed8}
\wed(\Sm^{6})^{\wed3}.
\end{aligned}
\]
For $m\ge6$ the ranks are $(3,8,5)$, which gives the following corollary.
\end{example}

\begin{corollary}\label{cor:d4}
For all $m,n\ge6$,
\[
\Xo^{\,m+n+8}\bigl(T(2,m)\sqcup T(2,n)\bigr)\ \simeq\ \Sigma^{2}\CP^{2}\ \wed\ (\Sm^{4})^{\wed2}\ \wed\ (\Sm^{5})^{\wed8}\ \wed\ (\Sm^{6})^{\wed4}.
\]
\end{corollary}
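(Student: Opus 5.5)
This is the case $d=4$ of \Cref{thm:stabilization}, combined with the value $\nu_4=1$ established in \Cref{ss:d4computation}. The plan is to check the hypotheses of \Cref{thm:stabilization}, substitute $\nu_4=1$, and read off the ranks.

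\emph{Step 1: the grading.} Take $m,n\ge6=d+2$ and $d=4$. The grading $q=m+n+8$ has window index $d=4$, and since $d=4\ge3$, \Cref{thm:stabilization} applies directly. It gives
\[
\Xo^{\,m+n+8}(L_{m,n})\simeq(\Sigma^{2}\CP^{2})^{\wed\nu_4}\wed(\Sm^{4})^{\wed 3-\nu_4}\wed(\Sm^{5})^{\wed 8}\wed(\Sm^{6})^{\wed 5-\nu_4},
\]
using $d-1=3$, $2d=8$ and $d+1=5$. Alternatively, one can argue from the definitions: by \Cref{lem:rho-vs-MN}, $(M(q),N(q))=(\rho_4(m),\rho_4(n))=(6,6)$ and $q^*=20$, so \Cref{thm:reduction} gives $\nu(m,n;q)=\nu(6,6;20)$. \Cref{lem:Pvanishing} with $m,n\ge6$ gives $(P(4),2P(5),P(6))=(3,8,5)$, and \Cref{cor:classification} then yields the same display.

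\emph{Step 2: insert $\nu_4=1$.} This value comes from the computation of $\Sq^2_1$ on $\Kho^{4,20}(L_{6,6};\F_2)$ recorded in \Cref{ss:d4computation}. That rank equals $\nu_4$ by Rajapakse's identification (\Cref{ss:oddspectrum}) and \Cref{cor:classification}. Independently of the computation, \Cref{prop:propagation} already gives the lower bound $\nu_4\ge1$. Substituting $\nu_4=1$ gives exponents $1,2,8,4$, which is the stated decomposition.

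\emph{Main obstacle.} The only non-formal input is the upper bound $\nu_4\le1$. The structural theory supplies just $1\le\nu_4\le\min(P(4),P(6))=3$ (\Cref{cor:room}), and it cannot decide whether the Hopf components of the connecting maps at the $\gamma=1$ steps with $n\in\{4,6\}$ are nonzero (\Cref{rem:residual}). So the equality must rest on the cited machine calculation. All other steps are bookkeeping with the rank formulas.
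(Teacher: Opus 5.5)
Your proposal is correct and follows the paper's own proof: specialize \Cref{thm:stabilization} to $d=4$, insert $\nu_4=1$ from the KnotJob computation of \Cref{ss:d4computation}, and read off the ranks $\bigl(P(4),2P(5),P(6)\bigr)=(3,8,5)$. You are also right that the paper's argument obtains the upper bound $\nu_4\le1$ only from that computation, while \Cref{prop:propagation} supplies the lower bound $\nu_4\ge1$.
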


\begin{proof}
Take $d=4$ in \Cref{thm:stabilization}, with $\nu_{4}=1$ (\Cref{ss:d4computation}) and
$\bigl(P(4),2P(5),P(6)\bigr)=(3,8,5)$.
\end{proof}

\subsection{Beyond \texorpdfstring{$d=4$}{d=4}: a Hopf connecting map}

\Cref{tab:kj} records, for the pairs we computed, the gradings $q$ with $\nu(m,n;q)\ne0$ and
the values $\nu(m,n;q)$, which by \Cref{cor:classification} and \Cref{ss:oddspectrum} are
the ranks of $\Sq^{2}_{1}$ computed with KnotJob.
The first entry of each row is $q=m+n+8$, in agreement with \Cref{prop:propagation}; the
later entries have $d=6,8,10$.

\begin{table}[ht]
\centering
\small
\begin{tabular}{cl}
\hline
$(m,n)$ & $q:\nu$\\
\hline
$(3,4)$ & $15:1$\\
$(3,5)$ & $16:1,\ 20:1$\\
$(3,6)$ & $17:1,\ 21:1$\\
$(3,7)$ & $18:1,\ 22:1,\ 26:1$\\
$(3,8)$ & $19:1,\ 23:1,\ 27:1$\\
$(3,9)$ & $20:1,\ 24:1,\ 28:1,\ 32:1$\\
$(4,4)$ & $16:1$\\
$(4,5)$ & $17:1,\ 21:1$\\
$(4,6)$ & $18:1,\ 22:1$\\
$(4,7)$ & $19:1,\ 23:1,\ 27:1$\\
$(4,8)$ & $20:1,\ 24:1,\ 28:1$\\
$(5,5)$ & $18:1,\ 22:2,\ 26:1$\\
$(5,6)$ & $19:1,\ 23:2,\ 27:1$\\
$(5,7)$ & $20:1,\ 24:2,\ 28:2,\ 32:1$\\
$(6,6)$ & $20:1,\ 24:2,\ 28:1$\\
\hline
\end{tabular}
\medskip
\caption{For each pair $(m,n)$, the quantum gradings $q$ with $\nu(m,n;q)\neq0$, written
$q:\nu(m,n;q)$. Here $\nu(m,n;q)$ is the rank of $\Sq^{2}_{1}$ on
$\Kho^{d,q}(L_{m,n};\F_2)$, computed with KnotJob.}
\label{tab:kj}
\end{table}

\begin{remark}\label{rem:hopfd6}
Consider the step $L_{3,4}\rightsquigarrow L_{3,5}$ at $d=6$, from grading $19$ to grading
$20$. For $L_{3,4}$ we have $d=6=m+n-1$, so $T=\Xo^{19}(L_{3,4})$ is a wedge of spheres by
\Cref{thm:vanishing}, namely $(\Sm^{6})^{\wed2}\wed(\Sm^{7})^{\wed2}$. By
\Cref{lem:source}(ii), $A\simeq(\Sm^{6})^{\wed2}\wed\Sm^{7}$, so $\gamma=1$ and $a'=2$, and
\Cref{prop:dichotomy} gives $\nu(3,5;20)=0$ if $\delta\simeq0$ and $\nu(3,5;20)=1$
otherwise. \Cref{tab:kj} lists $q=20$ for $L_{3,5}$, so $\delta\not\simeq0$, and case (ii)
of \Cref{prop:dichotomy}, with $(\nu',a',b',c')=(0,2,2,0)$ and
$(\alpha,\beta,\gamma)=(0,2,1)$, gives
\[
\Xo^{20}(L_{3,5})\ \simeq\ \Sigma^{4}\CP^{2}\wed\Sm^{6}\wed(\Sm^{7})^{\wed4} .
\]
So at $d=6$ a skein step with $m,n<6$ does create a summand $\CP^{2}$: the vanishing of all
such connecting maps at $d=4$ (\Cref{ss:d4computation}) does not persist.
\end{remark}

\section{Closing remarks, open questions and future directions}\label{sec:questions}
By \eqref{eq:intro-classification}, determining $\Xo^{q}(L_{m,n})$ comes down to determining
\begin{equation}\label{eq:nu-Sq21}
\nu(m,n;q)=\rk_{\F_2}\bigl(\Sq^2_1\colon\Kho^{d,q}(L_{m,n};\F_2)
\longrightarrow\Kho^{d+2,q}(L_{m,n};\F_2)\bigr),
\end{equation}
for which we have no general formula. We compare this rank with the even square, then state questions about the
remaining attaching maps.

\subsection{Relation to earlier questions}\label{ss:earlier}
Question~(q-6) of \cite[\S5.7]{SSS} asks whether the (equivariant) odd Khovanov spectrum
of $L_1\sqcup L_2$ can be expressed in terms of the spectra of $L_1$ and $L_2$. For
$L_{m,n}$, after forgetting the $\Z/2$-action, \Cref{thm:reduction} reduces
$\Xo^{q}(L_{m,n})$ to $m$, $n$, $q$ and finitely many values of $\nu$
(\Cref{cor:intro-irreducible}), but not to smash products: every wedge of smash products of
the spectra $\Xo^{q_i}(T(2,k_i))$ is a wedge of spheres (\Cref{lem:factorspectra}), whereas
$\Xo^{m+n+8}(L_{m,n})$ has a summand $\Sigma^{2}\CP^{2}$ for all $m,n\ge3$
(\Cref{prop:propagation}). For $m=n=3$ this was observed by Sch\"utz
\cite[\S5.2]{Schutz2022}, assuming that his squares are Steenrod squares of a spectrum, and
Rajapakse proved that $\X_{k}(T(2,3)\sqcup T(2,3))$ is not the smash product for every odd
$k$ \cite[Thm.~1.3]{Rajapakse}.

\emph{Smash products of Moore spectra.} A smash product of Moore spectra with free integral
homology is a wedge of spheres: write each factor as $M(F)\wed M(T)$ with $F$ free and $T$
finite and distribute; every summand involving some $M(T)$ has finite homology. Since
$\Kho(L_{m,n};\Z)$ is free, any wedge of such smash products equivalent to $\Xo^{q}(L_{m,n})$
would be a wedge of spheres, so by \Cref{prop:propagation} $L_{m,n}$ with $m,n\ge3$ answers \cite[\S5.7, (q-3)]{SSS}.
(Freeness is essential: $\Sigma^{2}(\RP^{2}\wedge\RP^{2})$ is a smash of Moore spectra with
$\Sq^{2}\neq0$.)

\emph{Summands $\CP^{2}$.} In the even theory the gradings $m+n+8$ of
\Cref{prop:propagation} carry instead one summand $\Sigma^{2}(\RP^{2}\wedge\RP^{2})$, whose
integral cohomology is $\Z/2$ in degrees $5$ and $6$ (\Cref{prop:even} below, as
$\mu(m,n;m+n+8)=1$), in accordance with \cite[Conj.~2.1]{LOS}.

\emph{Reduced spectra.} Sarkar, Scaduto and Stoffregen construct, for a basepoint $p$ on
$L$, reduced odd spectra $\widetilde{\X}_{o}^{\,j}(L,p)$ whose cohomology is
$\Khro^{*,j}(L;\Z)$, and a cofibration sequence
\[
\widetilde{\X}_{o}^{\,q-1}(L,p)\longrightarrow\Xo^{q}(L)\longrightarrow
\widetilde{\X}_{o}^{\,q+1}(L,p)
\]
\cite[Thm.~1.2]{SSS}. They ask whether $\Xo^{q}(L)$ is equivalent to the wedge of the two
outer terms, as \Cref{prop:orsplit} suggests \cite[\S5.7, (q-2)]{SSS}. Let $L=L_{m,n}$ and
$q\equiv m+n\pmod 2$. By \Cref{prop:orsplit}, $\Khro^{*,q-1}(L_{m,n};\Z)$ is a direct summand
of $\Kho^{*,q-2}(L_{m,n};\Z)$ and of $\Kho^{*,q}(L_{m,n};\Z)$, so by \Cref{lem:window} it is
free and concentrated in degrees $d,d+1$; likewise $\Khro^{*,q+1}(L_{m,n};\Z)$ is free and
concentrated in degrees $d+1,d+2$. By the proof of \Cref{lem:factorspectra} both reduced
spectra are therefore wedges of spheres, and as their ranks add up to those of
$\Xo^{q}(L_{m,n})$,
\[
\Xo^{q}(L_{m,n})\ \simeq\ \widetilde{\X}_{o}^{\,q-1}(L_{m,n},p)\wed
\widetilde{\X}_{o}^{\,q+1}(L_{m,n},p)\quad\Longleftrightarrow\quad\nu(m,n;q)=0 .
\]
So the answer to \cite[(q-2)]{SSS} is negative for $L_{m,n}$ in grading $m+n+8$ whenever
$m,n\ge3$. A first negative answer, for $T(3,-4)$ and $\X_{1}$, is due to Rajapakse
\cite[Thm.~1.4]{Rajapakse}. (That $\Sq^{2}_{\varepsilon}$ does not split over reduced odd Khovanov homology is
observed for knots in \cite[\S1]{Schutz2022}.)

\subsection{The count \texorpdfstring{$\mu$}{mu} and the even theory}

\begin{definition}\label{def:Nd}
For $m,n\ge2$ and $q\equiv m+n\pmod 2$ let
\[
\mu(m,n;q)\ :=\ \#\bigl\{(i_1,i_2)\in(2\Z)^{2}\ :\ 2\le i_1\le m-1,\ \ 2\le i_2\le n-1,\ \
i_1+i_2=d\bigr\}.
\]
\end{definition}

Thus $\mu(2,n;q)=0$, and for $m,n\ge3$ one has $\mu(m,n;q)\ge1$ if and only if $d$ is even and
$4\le d\le 2\lfloor\tfrac{m-1}{2}\rfloor+2\lfloor\tfrac{n-1}{2}\rfloor$, where
$2\lfloor\tfrac{m-1}{2}\rfloor$ and $2\lfloor\tfrac{n-1}{2}\rfloor$ are the largest even
integers that $i_1$ and $i_2$ can be. This count computes the even theory.

\begin{proposition}\label{prop:even}
Let $m,n\ge2$ and let $q\equiv m+n\pmod 2$. Then
\[
\Xe^{q}(L_{m,n})\ \simeq\ \bigl(\Sigma^{d-2}(\RP^{2}\wedge\RP^{2})\bigr)^{\wed \mu(m,n;q)}\wed W,
\]
where $W$ is a wedge of spheres and of suspensions of $\RP^{2}$. Consequently the second
Steenrod square $\Sq^{2}_{e}\colon\Kh^{\,i,q}(L_{m,n};\F_2)\to\Kh^{\,i+2,q}(L_{m,n};\F_2)$ of the even
theory has rank $\mu(m,n;q)$ for $i=d$ and is zero for $i\ne d$.
\end{proposition}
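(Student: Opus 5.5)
The plan is to use the even smash-product formula \eqref{eq:even-smash} of Lawson--Lipshitz--Sarkar, which does hold for split unions, together with a computation of the even spectra of the factors $T(2,k)$, and then to distribute the smash product over wedges.

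\emph{Step 1: the factors.} First compute $\Kh(T(2,k);\Z)$. The classical answer has $\Z$ in degrees $0$ (two generators), a $\Z/2$ torsion class in each even degree $i$ with $2\le i\le k-1$ (sitting between the $\Z$'s in degrees $i$ and $i+1$ on the upper diagonal), and free classes on the two diagonals. In each quantum grading $q_1$ the homology of $\Xe^{q_1}(T(2,k))$ is therefore one of the following: $\Z$ in one degree; $\Z$ in two consecutive degrees; or $\Z/2$ in a single degree $i$. The last case is the quantum grading $k+2i+2$ with $i$ even, $2\le i\le k-1$, where the $\Z$'s of degree $i$ and $i+1$ pair up into torsion. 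Since $T(2,k)$ is thin, a standard Hatcher-4C argument (the even analogue of \Cref{lem:factorspectra}) shows that $\Xe^{q_1}(T(2,k))$ is a wedge of spheres, or a single suspended Moore spectrum $\Sigma^{i-2}\RP^{2}$ with cells in degrees $i-1$ and $i$. Here the top cohomology is $\Z/2$ in degree $i$.

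The one care point is that two-degree free pieces cannot carry a nontrivial attaching map, because $\pi_{i}$ of a wedge of $(i-1)$-spheres is detected by homology in the stable range.

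\emph{Step 2: smash and distribute.} By \eqref{eq:even-smash}, $\Xe^{q}(L_{m,n})$ is a wedge, over $q_1+q_2=q$, of smash products of these pieces. A sphere smashed with anything is a suspension of it. The only non-sphere, non-Moore summands therefore come from pairs with both factors Moore spectra, $\Sigma^{i_1-2}\RP^2\wedge\Sigma^{i_2-2}\RP^2=\Sigma^{i_1+i_2-4}(\RP^2\wedge\RP^2)$. This requires $i_1,i_2$ even with $2\le i_1\le m-1$ and $2\le i_2\le n-1$. The quantum gradings are $q_1=m+2i_1+2$ and $q_2=n+2i_2+2$, so $q=m+n+2(i_1+i_2)+4$. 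In terms of the window index this says $i_1+i_2=d-2$.

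I must then reconcile this with \Cref{def:Nd}, which uses $i_1+i_2=d$. I expect the correct torsion grading convention to shift each $q_j$ by $2$, giving exactly the count $\mu(m,n;q)$ with suspension $\Sigma^{d-2}$. I would fix the convention by checking against $T(2,3)$, whose $\Z/2$ sits at $(i,q)=(2,7)$, so that $q_1=k+2i-2$. With that convention, $q=m+n+2(i_1+i_2)-4$ and $i_1+i_2=d+2$, which differs again. So the main bookkeeping obstacle is pinning down the gradings and degrees of the torsion classes. Before asserting the match with $\mu$ and the suspension index $d-2$, I would recheck both against \eqref{eq:even-smash} and against the claim $\mu(m,n;m+n+8)=1$ for $m=n=3$, which forces $i_1=i_2=2$ at $d=4$.

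\emph{Step 3: Steenrod squares.} On a wedge of spheres and of $\Sigma^{j}\RP^2$, $\Sq^2$ vanishes for degree reasons. On $\Sigma^{d-2}(\RP^2\wedge\RP^2)$, the $\F_2$-cohomology lives in degrees $d,d+1,d+1,d+2$, and the Cartan formula from \Cref{ss:chang} gives $\Sq^2$ of rank one from degree $d$ to degree $d+2$. Summing over the $\mu$ summands gives rank $\mu(m,n;q)$ at $i=d$ and zero elsewhere, with $\Sq^2_e$ identified with the Lipshitz--Sarkar square.
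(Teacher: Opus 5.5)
Your strategy is the paper's: the Lawson--Lipshitz--Sarkar smash formula \eqref{eq:even-smash}, the factor spectra $\Xe^{Q}(T(2,k))$ as wedges of spheres and at most one suspended $\RP^2$, distribution of the smash product, and the Cartan formula for $\Sq^2$ on $\RP^2\wedge\RP^2$. The paper gets the factor spectra from \cite[Prop.~9.2]{LS14a} rather than a Hatcher 4C argument; either works. But the proposal as written does not prove the proposition. The only non-formal content is which pairs $(q_1,q_2)$ give two Moore factors, and in which degrees the resulting summand sits. You leave this unresolved, and it rests on incorrect data. You end with $i_1+i_2=d-2$ (or $d+2$) and a summand $\Sigma^{i_1+i_2-4}(\RP^2\wedge\RP^2)$. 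Neither matches $\mu(m,n;q)$ or the suspension $\Sigma^{d-2}$, and you only express the hope that a change of convention fixes this.

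The error is in where the torsion of $\Kh(T(2,k);\Z)$ lives. Take an even $i$ with $2\le i\le k-1$. The two classes of $\Kho(T(2,k))$ that merge are the upper generator of degree $i$ and the lower generator of degree $i+1$, which share quantum grading $k+2i$ (\Cref{lem:torusknot}). They are replaced by a single $\Z/2$ in bidegree $(i+1,k+2i)$. For the trefoil this is $(3,7)$, not $(2,7)$. Even your $(2,7)$ gives $q_1=k+2i$, not $k+2i-2$. Hence $\Xe^{k+2i}(T(2,k))\simeq\Sigma^{i-1}\RP^2$, with cells in degrees $i$ and $i+1$, rather than $\Sigma^{i-2}\RP^2$ in grading $k+2i+2$.

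With this correction, $q=q_1+q_2=m+n+2(i_1+i_2)$ gives exactly $i_1+i_2=d$, the condition defining $\mu$. The smash product is then $\Sigma^{(i_1-1)+(i_2-1)}(\RP^2\wedge\RP^2)=\Sigma^{d-2}(\RP^2\wedge\RP^2)$. There is also a convention-free check you could have used. Since $\Kh(L_{m,n};\F_2)\cong\Kho(L_{m,n};\F_2)$ is concentrated in degrees $d,d+1,d+2$ in grading $q$ (\Cref{lem:window}), any $\RP^2\wedge\RP^2$ summand is forced to be $\Sigma^{d-2}(\RP^2\wedge\RP^2)$. Your Step 3 is then correct.

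A smaller slip: in the free two-degree case the attaching map lies in $\pi_{i-1}$ of a wedge of $(i-1)$-spheres, which is detected by homology. The group $\pi_i$ of such a wedge contains $\eta$ and is not detected by homology.
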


\begin{proof}
By Khovanov's computation \cite{Khovanov2000}, $\Kh(T(2,k);\Z)$ is obtained from the group
$\Kho(T(2,k);\Z)$ of \Cref{lem:torusknot} by replacing, for each even $i$ with
$2\le i\le k-1$, the two summands $\Z$ in bidegrees $(i,k+2i)$ and $(i+1,k+2i)$ by a single
summand $\Z/2$ in bidegree $(i+1,k+2i)$.
Since $T(2,k)$ is quasi-alternating, each $\Xe^{Q}(T(2,k))$ is a wedge of Moore spaces
\cite[Prop.~9.2]{LS14a}, hence determined by its cohomology: it is $\Sigma^{i-1}\RP^{2}$ if
$Q=k+2i$ with $i$ as above, and a wedge of at most two spheres otherwise.

By \eqref{eq:even-smash}, $\Xe^{q}(L_{m,n})$ is the wedge over $q_1+q_2=q$ of the spectra
$\Xe^{q_1}(T(2,m))\wedge\Xe^{q_2}(T(2,n))$. If one factor is a wedge of spheres, the summand
is a wedge of spheres and suspensions of $\RP^{2}$. Otherwise $q_1=m+2i_1$ and $q_2=n+2i_2$
with $i_1,i_2$ even, $2\le i_1\le m-1$ and $2\le i_2\le n-1$, and $q_1+q_2=q$ means
$i_1+i_2=d$. Conversely, every such pair $(i_1,i_2)$ gives the pair
$(q_1,q_2)=(m+2i_1,n+2i_2)$. So these summands correspond bijectively to the pairs counted by
$\mu(m,n;q)$, and each of them is
$\Sigma^{i_1-1}\RP^{2}\wedge\Sigma^{i_2-1}\RP^{2}\simeq\Sigma^{d-2}(\RP^{2}\wedge\RP^{2})$.
Taking $W$ to be the wedge of the remaining summands gives the stated decomposition.

The operation $\Sq^{2}$ is additive over wedge summands, and it vanishes on $W$, since the
cohomology of a sphere or a suspension of $\RP^{2}$ occupies at most two adjacent degrees.
The group $\widetilde H^{2}(\RP^{2}\wedge\RP^{2};\F_2)$ is spanned by $x\otimes x$, where
$x\in\widetilde H^{1}(\RP^{2};\F_2)$ is the generator, and $\Sq^{2}(x\otimes x)\neq0$
(\Cref{ss:chang}). Hence each summand
$\Sigma^{d-2}(\RP^{2}\wedge\RP^{2})$, whose mod $2$ cohomology lies in degrees $d$, $d+1$ and
$d+2$, contributes rank one to $\Sq^{2}_{e}\colon\Kh^{\,i,q}\to\Kh^{\,i+2,q}$ for $i=d$ and nothing
for $i\ne d$.
\end{proof}

For $L_{3,3}$ in grading $14$ both counts equal $1$, by \Cref{prop:even} and
\eqref{eq:schutz-example}. We conjecture that they agree in general.

\begin{conjecture}\label{conj:Nd}
For all $m,n\ge2$ and all $q\equiv m+n\pmod 2$,
\[
\nu(m,n;q)\ =\ \mu(m,n;q).
\]
Equivalently, by \eqref{eq:nu-Sq21} and \Cref{prop:even}, the operations $\Sq^{2}_{1}$ and
$\Sq^{2}_{e}$ have the same rank on
$\Kho^{\,i,q}(L_{m,n};\F_2)\cong\Kh^{\,i,q}(L_{m,n};\F_2)$ for $i=d$, where the isomorphism
is \cite[Prop.~1.7]{ORS}.
\end{conjecture}

\begin{remark}[Evidence]\label{rem:Nd-evidence}
Put $E_k=\{i\in2\Z:2\le i\le k-1\}$. The pairs counted by $\mu$ lie in
$E_m\times E_n$. They inject into the pairs counted by $P(d)$, and translating
each coordinate by $1$ gives an injection into those counted by $P(d+2)$.
Thus $\mu\le\min\{P(d),P(d+2)\}$, as required by \Cref{cor:room}.
Along a step in the second index,
\[
\mu(m,n;q)-\mu(m,n-1;q-1)
=[\,n\text{ odd and }d-n+1\in E_m\,].
\]
This is $0$ or $1$; if it is $1$, then $d-n+2\in S_m$, so the corresponding
source has $\gamma=1$. Moreover every counted coordinate is at most $d-2$,
which shows directly that $\mu$ is unchanged by the reduction
$(m,n)\mapsto(\rho_d(m),\rho_d(n))$. Its support lies in $4\le d\le m+n-2$.
These facts give the same vanishing, monotonicity, and stabilization constraints
as those proved for $\nu$.

\Cref{tab:kj} lists every grading with $\nu(m,n;q)\ne0$ for its fifteen pairs, and its entries,
including their multiplicities, are exactly the non-zero values of $\mu$. So $\nu=\mu$ in
every grading of these pairs. This is computational evidence, not a proof of the conjecture.
\end{remark}

The conjecture predicts
\[
\nu_d=\begin{cases}d/2-1,&d\ge4\text{ even},\\0,&d\text{ odd},\end{cases}
\]
by counting pairs of positive even integers adding to $d$. At the largest
possible window index $d=m+n-2$, it predicts $\nu=1$ precisely when both $m$ and $n$
are odd: the only possible pair is $(m-1,n-1)$.

\subsection{Open questions}

\begin{question}\label{q:odd-d}
Is $\nu(m,n;q)=0$ whenever $d$ is odd?
\end{question}

\Cref{conj:Nd} predicts a positive answer, since $\mu(m,n;q)=0$ whenever $d$ is odd, and \Cref{tab:kj}
contains no grading with $d$ odd, although for instance $L_{3,4}$ has room at $d=5$:
$\min\{P(5),P(7)\}=1$.

\begin{question}\label{q:refinements}
For all $m$ and $n$, do $\Sq^{2}_{0}$, $\Sq^{2}_{1}$ and the even square $\Sq^{2}_{e}$ coincide on
$\Kho(L_{m,n};\F_2)\cong\Kh(L_{m,n};\F_2)$?
\end{question}

By \Cref{prop:even}, an affirmative answer implies \Cref{conj:Nd}. By
\Cref{cor:classification} and \Cref{ss:oddspectrum} it also implies
$\X_{1}^{q}(L_{m,n})\simeq\X_{3}^{q}(L_{m,n})$ for all $q$, which is equivalent to
$\rk\Sq^{2}_{0}=\rk\Sq^{2}_{1}$ in every grading. For general links
$\X_{\ell}(L)$ is not determined by the parity of $\ell$ \cite[Thm.~1.1]{Rajapakse}, which answers
\cite[\S5.7, (q-7)]{SSS};
the two odd squares differ already on knots: among knots with at most eight crossings,
$\Sq^{2}_{0}$ is non-trivial only on $T(3,4)$ and $\Sq^{2}_{1}$ only on its mirror
\cite[Thm.~1.2]{Schutz2022}. For $L_{3,3}$ in grading $14$ both are isomorphisms
\cite[\S5.2]{Schutz2022}.

\begin{question}\label{q:cartan}
Let $L_1$ and $L_2$ be quasi-alternating, and let $\Sq^{1}_{e}$ denote the Bockstein of even
Khovanov homology. Is
\[
\Sq^{2}_{\varepsilon}(x\otimes y)\ =\ \Sq^{1}_{e}x\otimes\Sq^{1}_{e}y\qquad(\varepsilon=0,1)
\]
for all $x\in\Kh(L_1;\F_2)$ and $y\in\Kh(L_2;\F_2)$, under
$\Kho(L_1\sqcup L_2;\F_2)\cong\Kh(L_1;\F_2)\otimes\Kh(L_2;\F_2)$?
\end{question}

The Bockstein of odd Khovanov homology vanishes on $L_1$ and $L_2$, since $\Kho(L_i;\Z)$ is
free by \Cref{prop:orqa,prop:orsplit}, so the right-hand side is not seen by the odd theory
of the factors. Since each $\Xe(L_i)$ is a wedge of Moore spectra \cite[Prop.~9.2]{LS14a},
the Cartan formula and \eqref{eq:even-smash} identify the right-hand side with the even square
$\Sq^{2}_{e}$ of $L_1\sqcup L_2$. For $L_{m,n}$ the question is therefore \Cref{q:refinements}.
By \Cref{cor:classification}, which applies
to $L_1\sqcup L_2$ (\Cref{ss:future}), an affirmative answer would determine
$\Xo^{q}(L_1\sqcup L_2)$ by $\Kho(L_1\sqcup L_2;\Z)$ and the Bocksteins of $\Kh(L_1;\Z)$ and
$\Kh(L_2;\Z)$.

\begin{question}\label{q:hopf}
Is $\nu(2,n;q)=0$ for all $n\ge2$ and all $q$, that is, is every $\Xo^{q}(L_{2,n})$ a wedge
of spheres?
\end{question}

For $m=2$ the skein reduction has only two possible targets: if $0\le d\le n$ and $d\ne1$, then by
\Cref{thm:reduction}
\begin{equation}\label{eq:hopfred}
\nu(2,n;n+2+2d)\ =\
\begin{cases}\nu(2,d;3d+2), & n\in\{d,d+1\},\\ \nu(2,d+2;3d+4), & n\ge d+2,\end{cases}
\end{equation}
since here $M(q)=2$, while $N(q)=d$ if $n\in\{d,d+1\}$ and $N(q)=d+2$ if $n\ge d+2$
(substitute $q=n+2+2d$ in the definitions of $M(q)$ and $N(q)$). Hence the question concerns
only the values $\nu(2,d;3d+2)$ and $\nu(2,d+2;3d+4)$. \Cref{conj:Nd} predicts a positive
answer, as $\mu(2,n;q)=0$.

By \eqref{eq:hopfred}, for fixed $d$ the value $\nu(2,n;n+2+2d)$ is constant for $n\ge d+2$.
For $d=0$ this covers every $n\ge2$: $\nu(2,n;n+2)=\nu(2,2;4)$. For $d=2$, $\nu(2,n;n+6)$ equals
$\nu(2,2;8)$ for $n\in\{2,3\}$ and $\nu(2,4;10)$ for $n\ge4$. So the gradings $q=4$ and $q=8$ of
$L_{2,2}=\mathrm{Hopf}\sqcup\mathrm{Hopf}$ settle $d=0$ and the first two values for $d=2$. As $m+n=4$, these are the only
gradings of $L_{2,2}$ not covered by \Cref{thm:vanishing}, and in both
$\min\{P(d),P(d+2)\}=1$: \Cref{cor:room} leaves room for one summand $\Sigma^{d-2}\CP^{2}$. By \eqref{eq:deloop} and \Cref{lem:factorspectra},
$\Xo^{q}(\mathrm{Hopf}\sqcup U)\simeq\Xo^{q-1}(\mathrm{Hopf})\wed\Xo^{q+1}(\mathrm{Hopf})$, where
$\Xo^{q}(\mathrm{Hopf})$ is $\Sm^{0}$ for $q\in\{0,2\}$, $\Sm^{2}$ for $q\in\{4,6\}$ and
contractible otherwise. Since $L_{2,1}=\mathrm{Hopf}\sqcup T(2,1)$ and $T(2,1)$ is an unknot,
\eqref{eq:skeinLmn} with $m=n=2$ reads, in these two gradings,
\[
\begin{aligned}
\Sigma\Xo^{-1}(\mathrm{Hopf}\sqcup U)\simeq\Sm^{1}\ &\xrightarrow{\ \delta\ }\
\Xo^{3}(\mathrm{Hopf}\sqcup U)\simeq\Sm^{0}\wed\Sm^{2}\ \longrightarrow\ \Xo^{4}(L_{2,2}),\\
\Sigma\Xo^{3}(\mathrm{Hopf}\sqcup U)\simeq\Sm^{1}\wed\Sm^{3}\ &\xrightarrow{\ \delta\ }\
\Xo^{7}(\mathrm{Hopf}\sqcup U)\simeq\Sm^{2}\ \longrightarrow\ \Xo^{8}(L_{2,2}).
\end{aligned}
\]
By \Cref{prop:dichotomy}, $\Xo^{4}(L_{2,2})$ is $\Sm^{0}\wed(\Sm^{2})^{\wed2}$ or
$\Sigma^{-2}\CP^{2}\wed\Sm^{2}$, and $\Xo^{8}(L_{2,2})$ is $(\Sm^{2})^{\wed2}\wed\Sm^{4}$ or
$\CP^{2}\wed\Sm^{2}$. KnotJob gives $\Sq^{2}_{1}=0$ on $\Kho(L_{2,2};\F_2)$, so the first
alternative holds in both gradings: $\nu(2,2;4)=\nu(2,2;8)=0$, and $\Xo(L_{2,2})$ is a wedge of
spheres.
Hence $\nu(2,n;n+2)=0$ for all $n\ge2$, and $\nu(2,3;9)=0$.
KnotJob also gives $\Sq^{2}_{1}=0$ on $\Kho(L_{2,k};\F_2)$ for $3\le k\le10$, so
$\nu(2,n;q)=0$ for all $q$ whenever $n\le10$. In particular $\nu(2,4;10)=0$, so
$\nu(2,n;n+6)=0$ for all $n\ge2$. More generally, unless $0\le d\le n$ and
$d\ne1$, $\nu(2,n;q)=0$ by \Cref{thm:vanishing} or because $\Xo^{q}(L_{2,n})$ is contractible.
Otherwise, by \eqref{eq:hopfred}, $\nu(2,n;q)$ is a value of $\nu$ for $L_{2,d}$ if
$n\in\{d,d+1\}$, and for $L_{2,d+2}$ if $n\ge d+2$. Both links are covered by the computation
when $d+2\le10$, so $\nu(2,n;q)=0$ for all $n\ge2$ whenever $d\le8$. The case
$n\ge d+2$ for $d=9,10$, and every case with $d\ge11$, remain open, since they require
$L_{2,k}$ with $k\ge11$.

\subsection{Future directions}\label{ss:future}

\emph{Other thin factors.} Let $L_1,L_2$ be quasi-alternating. By
\Cref{prop:orqa,prop:orsplit} and \Cref{lem:kunneth}(ii), $\Kho(L_1\sqcup L_2;\Z)$ is free
and supported on three consecutive $\delta$-diagonals, so \Cref{cor:classification} applies
verbatim: $\Xo^{q}(L_1\sqcup L_2)$ is determined by its homology and the rank of
$\Sq^{2}_{1}$. The skein reduction uses more: resolving a crossing of the standard diagram
of $T(2,\pm n)$ produces $T(2,\pm(n-1))$ and an unknot, so that the term $A$ of the skein
sequence is a wedge of spheres by delooping (\Cref{lem:source} and \Cref{sec:mirror}), and the
interior grading of \Cref{thm:vanishing} comes from the gap
$1\notin S_k$. At a quasi-alternating crossing of a general factor the source of
\eqref{eq:skein} is again a split union of quasi-alternating links, and one needs a version
of \Cref{prop:dichotomy} for sources which are not wedges of spheres.

\emph{Mirrors.} For $\overline{L}_{m,n}=T(2,-m)\sqcup T(2,-n)$ the skein sequence runs in
the opposite direction; its Spanier--Whitehead dual has the form treated in
\Cref{ss:dichotomy}, and \Cref{thm:mirror} proves Theorems~\ref{thm:vanishing},
\ref{thm:reduction} and~\ref{thm:stabilization} for the mirror family. The relation between
the mirror invariant $\overline{\nu}$ and $\nu$ remains open (\Cref{rem:mirror-nu}).

\emph{More components.} For $r\ge3$ factors the odd Khovanov homology of
$T(2,k_1)\sqcup\dots\sqcup T(2,k_r)$ is still free, by \Cref{lem:kunneth}(ii) and induction,
and in each quantum grading it is concentrated in $r+1$ consecutive degrees. Besides $\eta$,
the element $\eta^{2}\in\pi^{s}_{2}$ may then attach a cell three degrees above another.
Since the integral cohomology is free, $\Sq^{1}$ vanishes in every degree, and hence so does
$\Sq^{3}=\Sq^{1}\Sq^{2}$ (by the Adem relation); on
the cofiber of $\eta^{2}$, whose cohomology sits in degrees $j$ and $j+3$, $\Sq^{1}$ and
$\Sq^{2}$ vanish because the cohomology in degrees $j+1$ and $j+2$ is zero, and $\Sq^{3}$
vanishes as above. So no primary operation detects $\eta^{2}$: \Cref{cor:classification} has
no analogue in terms of primary operations, and $\eta^{2}$ has to be detected by a secondary
one. In the
even theory the split union of three trefoils already has $\Sq^{3}\neq0$ \cite[\S2.3]{LOS}.

\appendix
\section{The mirror family}\label{sec:mirror}
Let $\overline{L}_{m,n}:=T(2,-m)\sqcup T(2,-n)$, $m,n\ge2$, be the mirror of $L_{m,n}$, with
the mirrored standard diagram; all its crossings are negative. For $q\equiv m+n\pmod2$ put
$d=(q-m-n)/2$ as before. The link $T(2,-k)$ is quasi-alternating with signature $1-k$ in the
convention of \Cref{prop:orqa}, and its Jones polynomial is $J_{T(2,k)}(q^{-1})$, so the proof
of \Cref{lem:torusknot} gives $\Kho^{\,i,q}(T(2,-k);\Z)\cong\Kho^{\,-i,-q}(T(2,k);\Z)$. With
\Cref{lem:kunneth}(ii), the proof of \Cref{lem:window} then shows that
$\Kho^{\,*,-q}(\overline{L}_{m,n};\Z)$ is free and concentrated in the degrees $-d-2$, $-d-1$,
$-d$, of ranks $P(d+2)$, $2P(d+1)$, $P(d)$. By \Cref{cor:classification} there is a unique
integer $\overline{\nu}=\overline{\nu}(m,n;q)\ge0$ such that
\begin{equation}\label{eq:mirror-classification}
\Xo^{-q}(\overline{L}_{m,n})\simeq(\Sigma^{-d-4}\CP^{2})^{\wed\overline{\nu}}
\wed(\Sm^{-d-2})^{\wed P(d+2)-\overline{\nu}}\wed(\Sm^{-d-1})^{\wed2P(d+1)}
\wed(\Sm^{-d})^{\wed P(d)-\overline{\nu}},
\end{equation}
and $\overline{\nu}$ is the rank of $\Sq^{2}_{1}$ from $\Kho^{\,-d-2,-q}(\overline{L}_{m,n};\F_2)$
to $\Kho^{\,-d,-q}(\overline{L}_{m,n};\F_2)$.

\begin{theorem}\label{thm:mirror}
Let $m,n\ge2$ and $q\equiv m+n\pmod2$, and let $M(q)$, $N(q)$ and $q^{*}$ be as in
\Cref{thm:reduction}.
\begin{enumerate}
\item[(i)] $\overline{\nu}(m,n;q)=0$ if $d\in\{-2,-1,1,m+n-1,m+n\}$.
\item[(ii)] If $0\le d\le m+n-2$ and $d\ne1$, then
$\Xo^{-q}(\overline{L}_{m,n})\simeq\Xo^{-q^{*}}(\overline{L}_{M(q),N(q)})\wed W$ with $W$ a wedge
of spheres; in particular $\overline{\nu}(m,n;q)=\overline{\nu}(M(q),N(q);q^{*})$.
\item[(iii)] If $d\ge3$ and $m,n\ge d+2$, then $\overline{\nu}(m,n;q)=\overline{\nu}(d+2,d+2;4d+4)$.
\end{enumerate}
\end{theorem}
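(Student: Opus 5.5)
Part (i) follows the proof of \Cref{thm:vanishing}. By \eqref{eq:mirror-classification}, the integer $\overline{\nu}$ is at most $\min\{P(d+2),P(d)\}$, since $\Sq^2_1$ runs between groups of these ranks. For the five listed values of $d$, \Cref{lem:Pvanishing} gives $P(d)=0$ or $P(d+2)=0$, so $\overline{\nu}=0$.

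For (ii) we mirror the argument of \Cref{sec:skein,sec:reduction}. Resolve a (negative) crossing of the $T(2,-n)$ factor and use \Cref{rem:skeinneg}. Now $D_1$ is the oriented resolution, namely $\overline{L}_{m,n-1}$. The $0$-resolution $D_0$ is $T(2,-m)\sqcup U$, with the $n-1$ remaining twist crossings becoming positive, so $c'=-(n-1)$. This gives a cofibration sequence
\[
\Sigma^{-1}\Xo^{-q+1}(\overline{L}_{m,n-1})\ \longrightarrow\ \Sigma^{1-n}\Xo^{-q+3n-4}(T(2,-m)\sqcup U)\ \longrightarrow\ \Xo^{-q}(\overline{L}_{m,n}),
\]
up to the exact value of the quantum shift, which I would fix by matching with the long exact sequence exactly as in \Cref{lem:skein}. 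Equivalently, $\Xo^{-q}(\overline{L}_{m,n})$ is the fiber of a map $\overline{\delta}\colon\Xo^{-q+1}(\overline{L}_{m,n-1})\to B$, where $B$ is a suspension of the delooped term. By \Cref{lem:deloop} and the mirror of \Cref{lem:factorspectra}, $B$ is a wedge of spheres in degrees $-d,-d+1,-d+2$, with multiplicities $(\gamma,\beta,\alpha)$ as in \Cref{lem:source}(ii).

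Rather than redo the homotopy theory, I would apply Spanier--Whitehead duality $D(-)=F(-,\Sph)$. This turns $\overline{\delta}$ into a map $D B\to D\Xo^{-q+1}(\overline{L}_{m,n-1})$. Here $DB$ is a wedge of spheres in degrees $d-2,d-1,d$, with the top sphere $\Sm^{d}$ occurring $\gamma$ times. The target has free homology in degrees $d,d+1,d+2$. The duality also exchanges $\Sigma^{k}\CP^2$ with $\Sigma^{-k-4}\CP^2$, since $D\CP^2\simeq\Sigma^{-6}\CP^2$ by self-duality of the cell structure, so it carries $\overline{\nu}$ to the $\nu$-type count. After a shift this is precisely the setting of \Cref{prop:dichotomy}, with $\delta^{*}=0$ on integral cohomology. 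That vanishing is proved as in \Cref{lem:deltastar}: tensor with the free complex of $T(2,-m)$, reduce to $T(2,-n)$, and check the two bidegrees where the unknot term is nonzero using the mirrored \Cref{lem:torusknot}. Then \Cref{lem:hurewicz} and \Cref{lem:etacofiber} show the dual connecting map is null whenever $\gamma=0$. Since $D$ preserves wedges of spheres, this yields the mirror of \Cref{cor:deltanull}:
\[
\Xo^{-q}(\overline{L}_{m,n})\simeq\Xo^{-q+1}(\overline{L}_{m,n-1})\wed(\text{spheres})\quad\text{when }\gamma=0 .
\]
The condition $\gamma=0$ is the same condition $d-n+2\notin S_m$ as before, because the ranks are mirrored but the index bookkeeping is identical.

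With that step-lemma in hand, the proof of \Cref{thm:reduction} runs verbatim. The functions $\rho_d$ and \Cref{lem:rho-vs-MN} are purely combinatorial, so the same chain lowers $n$ to $N(q)$ and then $m$ to $M(q)$. Each step splits off only spheres, which carry no $\Sq^2$, and this gives (ii). Part (iii) is then (ii) with $M=N=d+2$, as in \Cref{thm:stabilization}.

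The main obstacle is the grading and shift bookkeeping in the negative skein sequence. Three things have to check out. First, the dual source must really be concentrated in degrees $d-2,d-1,d$ relative to the target, with the top sphere having multiplicity $\gamma$; this is what lets \Cref{lem:hurewicz} kill all but one component. Second, the analogue of Case 2 of \Cref{lem:deltastar} must still go through for the mirror. Third, under duality the relevant Hopf component must again land on a bottom-degree sphere, so that \Cref{lem:etacofiber} applies. I would verify these first on $\overline{L}_{2,2}$ by direct comparison of ranks.
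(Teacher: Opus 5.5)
Your route is the paper's: the skein sequence of \Cref{rem:skeinneg} at a crossing of the $T(2,-n)$ factor, delooping to a wedge of spheres, the K\"unneth reduction to $T(2,-n)$ for the vanishing of the connecting map on cohomology, Spanier--Whitehead duality into the setting of \Cref{prop:dichotomy}, and then the proof of \Cref{thm:reduction} verbatim. But the bookkeeping you deferred is wrong, not merely unverified, and as written your dual map is not in the setting of \Cref{prop:dichotomy}. The resolved crossing is itself negative, so $n_-(D)=m+n$ and $n_-(D_0)=m$. This gives $c'=-n$, not $-(n-1)$, and the middle term is $\Sigma^{-n}\Xo^{-q+3n-1}\bigl(T(2,-m)\sqcup U\bigr)$. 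Your grading $-q+3n-4$ is congruent to $m$ mod $2$, where $\Xo\bigl(T(2,-m)\sqcup U\bigr)$ is contractible, so your source would vanish. Your degrees $-d,-d+1,-d+2$ for $B$ are also impossible. The spectrum $\Xo^{-q+1}(\overline{L}_{m,n-1})$ has no homology in degree $-d+2$, so an $\alpha$-sphere of $B$ there would inject into $\widetilde H_{-d+1}$ of the fiber, which is zero. Dually, a source in degrees $d-2,d-1,d$ mapping to a $(d-1)$-connected target with free $H_d$ would be null outright by \Cref{lem:hurewicz}, leaving nothing for your third check to find.

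With $c'=-n$, \Cref{lem:deloop} and the proof of \Cref{lem:factorspectra} give $B=\Sigma^{1-n}\bigl(\Xo^{-q+3n-2}(T(2,-m))\wed\Xo^{-q+3n}(T(2,-m))\bigr)$. It has spheres in degrees $-d-1,-d,-d+1$ of multiplicities $\gamma,\beta,\alpha$. So $DB$ has spheres in degrees $d-1,d,d+1$ of multiplicities $\alpha,\beta,\gamma$, exactly as in \Cref{lem:source}(ii). Moreover $D\Xo^{-q}(\overline{L}_{m,n})$ is the cofiber of $DB\to D\Xo^{-q+1}(\overline{L}_{m,n-1})$, whose target lies in degrees $d,d+1,d+2$. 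That is \Cref{prop:dichotomy} with the same $d$; the paper dualizes the unrotated sequence and works with $e=d+1$, which is equivalent. For $\gamma=0$ the dual source has dimension at most $d$, and \Cref{lem:hurewicz} gives nullity. So your mirror of \Cref{cor:deltanull}, and with it (ii) and (iii), goes through once the shift is corrected. Two smaller fixes are needed. First, $D(\Sigma^{k}\CP^{2})\simeq\Sigma^{-k-6}\CP^{2}$, as your own $D\CP^{2}\simeq\Sigma^{-6}\CP^{2}$ says; only the number of these summands matters. Second, the vanishing you prove is for the original map on cohomology. To apply \Cref{lem:hurewicz} to the dual map you need that, all groups being free, the map also vanishes on integral homology, together with $H^{*}(DY;\Z)\cong H_{-*}(Y;\Z)$.
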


Thus Theorems~\ref{thm:vanishing}, \ref{thm:reduction} and~\ref{thm:stabilization}, and with
them \Cref{cor:intro-irreducible}, hold for the mirror family with $\overline{\nu}$ in place of
$\nu$.

\begin{proof}
Part (i) follows from \Cref{cor:classification}, which gives
$\overline{\nu}\le\min\{P(d+2),P(d)\}$, and \Cref{lem:Pvanishing}, as in the proof of
\Cref{thm:vanishing}. For (ii) and (iii) we follow \Cref{sec:skein,sec:reduction}; only
\Cref{prop:dichotomy} needs a new argument.

Resolve a crossing of the factor $T(2,-n)$. Since it is negative, its $1$-resolution is the
oriented one (\Cref{rem:skeinneg}): $D_{1}$ is the standard diagram of $\overline{L}_{m,n-1}$,
and $D_{0}$ is $T(2,-m)$ split with an unknot diagram whose $n-1$ crossings are positive, so
$c'=n_{-}(D_{0})-n_{-}(D)=-n$. \Cref{rem:skeinneg} and \Cref{lem:deloop} give a cofibration
sequence
\begin{equation}\label{eq:mirror-skein}
T\ \xrightarrow{\ \delta'\ }\ A\ \longrightarrow\ \Xo^{-q}(\overline{L}_{m,n}),
\end{equation}
where
\[
T:=\Sigma^{-1}\Xo^{-(q-1)}(\overline{L}_{m,n-1}),\qquad
A:=\Sigma^{-n}\Bigl(\Xo^{-q+3n-2}\bigl(T(2,-m)\bigr)\vee\Xo^{-q+3n}\bigl(T(2,-m)\bigr)\Bigr).
\]
Since $(q-1)-m-(n-1)=q-m-n$, the same $d$ belongs to $\overline{L}_{m,n-1}$ in grading
$-(q-1)$, so $T$ has free homology in the degrees $-d-3$, $-d-2$, $-d-1$. By the proof of \Cref{lem:factorspectra},
$\Xo^{Q}(T(2,-m))$ is a wedge of spheres, with one sphere $\Sm^{-i}$ for each $i\in S_m$ with
$-Q\in\{m+2i-2,m+2i\}$; the count in the proof of \Cref{lem:source} then gives
\[
A\simeq(\Sm^{-d-2})^{\wed\gamma}\wed(\Sm^{-d-1})^{\wed\beta}\wed(\Sm^{-d})^{\wed\alpha}
\]
with $\alpha$, $\beta$, $\gamma$ as in \Cref{lem:source}. The map $\delta'$ vanishes on integral
cohomology. Indeed, by the K\"unneth argument of \Cref{lem:deltastar} it suffices to treat
$T(2,-n)$ alone, for which $D_{1}=T(2,1-n)$, $D_{0}$ is an unknot and $c'=-n$; then the source
of $\delta'^{*}$ is $\Kho^{\,0,\pm1}(U;\Z)$. For the sign $-1$ the target
$\Kho^{\,1-n,1-3n}(T(2,1-n);\Z)$ vanishes. For the sign $+1$ and $n\ge3$ the target
$\Kho^{\,1-n,3-3n}(T(2,1-n);\Z)\cong\Z$ maps onto $\Kho^{\,1-n,2-3n}(T(2,-n);\Z)\cong\Z$,
because the next group $\Kho^{\,1,1}(U;\Z)$ vanishes; so this map is injective and
$\delta'^{*}=0$ by exactness. For $n=2$ the target $\Kho^{\,-1,-3}(U;\Z)$ vanishes.

Now pass to Spanier--Whitehead duals, writing $Y^{\vee}$ for the dual of a finite spectrum $Y$.
Duality reverses cofibration sequences, so \eqref{eq:mirror-skein} gives a cofibration sequence
\[
\Xo^{-q}(\overline{L}_{m,n})^{\vee}\ \longrightarrow\ A^{\vee}\
\xrightarrow{\ \delta'^{\vee}\ }\ T^{\vee},
\]
and $\Xo^{-q}(\overline{L}_{m,n})^{\vee}\simeq\Sigma^{-1}\Cone(\delta'^{\vee})$. Put $e:=d+1$.
Then
\[
A^{\vee}\simeq(\Sm^{e-1})^{\wed\alpha}\wed(\Sm^{e})^{\wed\beta}\wed(\Sm^{e+1})^{\wed\gamma},
\]
the spectrum $T^{\vee}$ has free homology in the degrees $e$, $e+1$, $e+2$, and $\delta'^{\vee}$
vanishes on integral cohomology, since $H^{*}(Y^{\vee};\Z)\cong H_{-*}(Y;\Z)$ naturally and
$\delta'$ vanishes on integral homology by the universal coefficient theorem, all groups
being free. This is the situation of \Cref{lem:source} and \Cref{prop:dichotomy}, with $d$
replaced by $e$, and the proof of \Cref{prop:dichotomy} uses nothing else. Hence either
$\delta'^{\vee}\simeq0$, or $\gamma=1$ and $\Cone(\delta'^{\vee})$ has exactly one more summand
$\Sigma^{e-2}\CP^{2}$ than $T^{\vee}$.

Duality preserves wedges, sends spheres to spheres, and sends $\Sigma^{j}\CP^{2}$ to
$\Sigma^{-j-6}\CP^{2}$, because the dual of $\eta$ is again the non-zero element of
$\pi^{s}_{1}$. So $\Xo^{-q}(\overline{L}_{m,n})$ has as many summands $\Sigma^{j}\CP^{2}$ as
$\Cone(\delta'^{\vee})$, and $T$ as many as $T^{\vee}$. We obtain the analogues of
\Cref{prop:dichotomy} and \Cref{cor:deltanull}: for $n\ge3$, $\overline{\nu}(m,n;q)$ equals
$\overline{\nu}(m,n-1;q-1)$ or $\overline{\nu}(m,n-1;q-1)+1$, and if $\gamma=0$, then
$\delta'\simeq0$ and $\Xo^{-q}(\overline{L}_{m,n})\simeq\Xo^{-(q-1)}(\overline{L}_{m,n-1})\wed A$.
The proof of \Cref{thm:reduction} uses only \Cref{cor:deltanull}, so it gives (ii), and (iii)
is the case $m,n\ge d+2$ of (ii), as in the proof of \Cref{thm:stabilization}.
\end{proof}

\begin{remark}\label{rem:mirror-nu}
\Cref{thm:mirror} does not identify $\overline{\nu}$ with $\nu$. Sarkar, Scaduto and Stoffregen
ask whether $\Xo^{q}(\overline{L})\simeq\Xo^{-q}(L)^{\vee}$ for every link $L$, as in the even
theory \cite[\S5.7, (q-5)]{SSS}, \cite[Thm.~11]{LLS}; this would give $\overline{\nu}=\nu$, but it
fails for $T(3,4)$ \cite[Thm.~1.2]{Rajapakse}. Sch\"utz's computations suggest instead that
$\Sq^{2}_{\varepsilon}$ on $\overline{L}$ is the transpose of $\Sq^{2}_{1+\varepsilon}$ on $L$
\cite[Conj.~5.1]{Schutz2022}. This would give
$\overline{\nu}(m,n;q)=\rk_{\F_2}\bigl(\Sq^{2}_{0}\colon\Kho^{\,d,q}(L_{m,n};\F_2)\to
\Kho^{\,d+2,q}(L_{m,n};\F_2)\bigr)$, and hence $\overline{\nu}=\nu$ if \Cref{q:refinements} has an
affirmative answer.
\end{remark}

\section*{Tool use}
The ranks of $\Sq^{2}_{1}$ used in \Cref{sec:examples,sec:questions}, including those in
\Cref{tab:kj}, were computed with Sch\"utz's program KnotJob \cite{KnotJob}.

The author used the generative AI assistant Claude (Anthropic) in preparing this paper: to edit
and typeset the text in \LaTeX, to draw the figures in TikZ, to search for and check references,
and to check parts of the exposition and of the proofs. The mathematics is
the author's own: the main ideas, the results and their proofs are due to the author, who takes
full responsibility for the contents of the paper.
\bibliographystyle{amsalpha}
\bibliography{Odd_Skein_Reduction}

\end{document}